\newtheorem{theorem}{Theorem}[section]
\newtheorem{lemma}[theorem]{Lemma}
\newtheorem{proposition}[theorem]{Proposition}
\newtheorem{prop}[theorem]{Proposition}
\newtheorem{coro}[theorem]{Corollary}
\newtheorem{lem}[theorem]{Lemma}
\theoremstyle{definition}
\newtheorem{defn}[theorem]{Definition}
\newtheorem{definition}[theorem]{Definition}
\theoremstyle{remark}
\newtheorem{rem}[theorem]{Remark}
\newtheorem{exa}[theorem]{Example}
\newcommand{\overbar}[1]{\mkern 1.5mu\overline{\mkern-1.5mu#1\mkern-1.5mu}\mkern 1.5mu}
\newcommand{\I}{\mathbb I}      
\newcommand{\Hy}{ \mathbb{H}^3 }
\newcommand{\bHy}{ \mathbb{\overline H}^3 }
\newcommand{\Cp}{\mathbb{CP}^3}
\newcommand{\C}{\mathbb{C}}
\newcommand{\R}{\mathbb{R}}
\newcommand{\limtrop}{\operatorname{lim}_{\operatorname{trop}}}
\newcommand{\ctor}{({\mathbb C}^\times)^n}
\newcommand{\tor}{(\C^\times)^{n}}
\newcommand{\dd}{\partial}
\newcommand{\am}{\mathcal{A}}
\newcommand{\coam}{\mathcal{B}}
\newcommand{\Z}{\mathbb{Z}}
\newcommand{\cp}{{\mathbb C}{\mathbb P}}
\newcommand{\rp}{{\mathbb R}{\mathbb P}}
\newcommand{\pp}{{\mathbb P}}
\newcommand{\Log}{\operatorname{Log}}
\renewcommand{\setminus}{\smallsetminus}
\newcommand{\Sym}{\operatorname{Sym}}
\newcommand{\camc}{\overline{\mathcal{A}}_C}
\newcommand{\ignore}[1]{\relax}
\begin{document}

\title{
Non-commutative amoebas}
\author{Grigory Mikhalkin and Mikhail Shkolnikov}
\address{Universit\'e de Gen\`eve,  Math\'ematiques, rue du Conseil-G\'en\'eral 7-9, 1205 Gen\`eve, Suisse}

\ignore{
\begin{abstract}
The group of isometries $\I$ of the hyperbolic space $\Hy$ is a 3-dimensional complex variety $\operatorname{PSL}_2(\C)$
which fibers over $\Hy$.
A {\em hyperbolic amoeba} is an image of an algebraic subvariety $V\subset\I$ under this fibration.
The paper surveys basic properties of the hyperbolic amoebas and compares them against more conventional
amoebas inside $\R^n$.
\end{abstract}
}

\begin{abstract}
The group of isometries of the hyperbolic space $\Hy$ is 
the 3-dimensional
group $\operatorname{PSL}_2(\C)$, which is one of the simplest non-commutative complex Lie groups. 
Its quotient by the subgroup $\operatorname{SO}(3)\subset\operatorname{PSL}_2(\C)$ naturally maps it back to $\Hy$.
Each fiber of this map is  diffeomorphic to the real projective 3-space $\rp^3$. 

The resulting map $\operatorname{PSL}_2(\C)\to\Hy$ can be viewed as the simplest non-commutative counterpart of the map $\operatorname{Log}:(\C^\times)^n\to\R^n$ from the commutative complex Lie group $(\C^\times)^n$ with the Lagrangian torus fibers that can considered as a Liouville-Arnold type integrable system. 
Gelfand, Kapranov and Zelevinsky \cite{GKZ} have introduced {\em amoebas} of algebraic varieties $V\subset(\C^\times)^n$ as images $\Log(V)\subset\R^n$.
We define the amoeba of an algebraic subvariety
of $\operatorname{PSL}_2(\C)$ as its image in $\Hy$.
The paper surveys basic properties of the resulting hyperbolic amoebas and compares them against the commutative amoebas $\R^n$.
\end{abstract}

\thanks{Research is supported in part by the grants 182111 and 178828
of the Swiss National Science Foundation.}

\maketitle

\section{Introduction}
\subsection{Three-dimensional hyperbolic space}
The hyperbolic space $\Hy$
is a complete contractible 3-space
enhanced with a Riemannian metric of constant curvature $-1$. Such a space is unique up
to isometry.

In the disk (Poincar\'e) model we
may represent $\Hy$ as the interior of the unit 3-disk
$$D^3=\{x\in\R^3\ |\ ||x||^2< 1\}\subset\R^3.$$
The geodesics on $\Hy$ in this model are cut by planar circles (or straight lines) orthogonal to
the boundary 2-sphere $\dd D^3\subset\R^3$. This 2-sphere $\dd D^3$ is called the {\em absolute}.
Similarly, geodesic 2-planes in $\Hy$ are cut by 2-spheres (or planes) in $\R^3$ perpendicular to
$\dd D^3$.

The absolute $\dd\Hy$ can be constructed intrinsically
and independent of the choice of model. Let us choose a point $x\in\Hy$.
The set of geodesic rays emanating from $x$ can be identified with $\dd\Hy$
by tracing the endpoint of the geodesic ray. On the other hand it can also be
identified with the unit tangent 2-sphere $UT(x)\approx S^2$ of the unit tangent vectors at $x$.
This gives us a canonical identification 
$$UT(x)=\dd\Hy$$
for any $x\in\Hy$ and, in particular, an identification $UT(x)=UT(y)$ for $x,y\in\Hy$.
It is easy to see that while this identification does not preserve the metric induced from
the tangent space $T\Hy$, it does preserve the conformal structure associated to that metric.
Thus the absolute $\dd\Hy$ comes with a natural conformal structure and can be identified
with the Riemann sphere
$$\dd\Hy=\cp^1.$$

The Riemann sphere $\cp^1$ is obtained from the 2-dimensional vector space $\C^2$ by projectivization.
The group $\operatorname{GL}_2(\C)$ of linear transformations of $\C^2$ acts also on $\cp^1$ so that a matrix
$\begin{pmatrix}  a & b \\ c & d \end{pmatrix}$ acts by the so-called M\"obius transformations
$$z\mapsto \frac{az+b}{cz+d}$$
on $\cp^1=\C\cup\{\infty\}$.

\begin{theorem}[Classical]
\label{pgl2c}
Any isometry of $\Hy$ extends to the absolute $\dd \Hy$.
Furthermore, 
there is a natural 1-1 correspondence between the group $\I$ of orientation-preserving 
isometries of the hyperbolic space $\Hy$  and 
the group $\operatorname{PSL}_2(\C)$ of the M\"obius transformations of the absolute $\dd \Hy\approx\cp^1$.
In other terms,
$$\I=\operatorname{PSL}_2(\C)=\operatorname{PGL}_2(\C).$$
\end{theorem}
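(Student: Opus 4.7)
The plan is to build a natural homomorphism $\Phi : \I \to \operatorname{PSL}_2(\C)$ and verify it is a bijection, then to observe that $\operatorname{PSL}_2(\C) = \operatorname{PGL}_2(\C)$ as a separate elementary step. Given an orientation-preserving isometry $\phi$ of $\Hy$, its differential at any $x \in \Hy$ is an orientation-preserving linear isometry of $T_x\Hy$, hence acts on the unit tangent sphere $UT(x)$; composing with the canonical identification $UT(x) = \dd\Hy$ from the preceding discussion produces a map $\bar\phi : \dd\Hy \to \dd\Hy$. Independence of the choice of $x$ is checked by tracking geodesic rays: any $p \in \dd\Hy$ is the endpoint of a geodesic ray $\gamma$ from $x$, and $\bar\phi(p)$ is equally well the endpoint of the image ray $\phi\circ\gamma$ from $\phi(x)$. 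Because $\phi$ is an isometry it preserves angles, so $\bar\phi$ preserves the conformal structure on $\dd\Hy = \cp^1$; by the classical characterization of orientation-preserving conformal self-maps of the Riemann sphere, $\bar\phi$ is therefore a M\"obius transformation, yielding $\Phi$.

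To see that $\Phi$ is injective, suppose $\bar\phi = \operatorname{id}$ and pick three distinct points on $\dd\Hy$. The three geodesics they determine pairwise are each preserved setwise by $\phi$ (their endpoints are fixed), and in fact pointwise (an isometry of $\R$ fixing both ends at infinity is the identity). These three fixed geodesics span a geodesic $2$-plane $P$, which is then pointwise fixed, and an orientation-preserving isometry of $\Hy$ fixing a totally geodesic hyperplane must be the identity (otherwise it would be reflection in $P$, which reverses orientation). For surjectivity I would use a Poincar\'e extension: every M\"obius transformation is a composition of inversions in circles $C \subset \cp^1$; each such $C$ bounds a unique geodesic $2$-plane $P \subset \Hy$, and hyperbolic reflection in $P$ is an orientation-reversing isometry whose boundary trace is inversion in $C$. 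Composing an even number of such reflections realizes the prescribed M\"obius transformation as $\bar\phi$ for some $\phi \in \I$.

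The equality $\operatorname{PSL}_2(\C) = \operatorname{PGL}_2(\C)$ is elementary complex linear algebra: every $A \in \operatorname{GL}_2(\C)$ can be rescaled to have determinant $1$ using a complex square root of $1/\det A$, so the inclusion $\operatorname{SL}_2(\C) \hookrightarrow \operatorname{GL}_2(\C)$ becomes surjective after projectivization, and both groups have center $\{\pm I\}$ modulo scalars. The step I expect to be most delicate is the verification that hyperbolic reflection in a geodesic $2$-plane really extends to the ambient Euclidean circle inversion on $\dd\Hy$; this is cleanest in the Poincar\'e disk model, where the plane is cut by a Euclidean sphere $S$ perpendicular to $\dd D^3$, hyperbolic reflection coincides with the Euclidean inversion in $S$ (both fix $S \cap \Hy$ pointwise and are involutive conformal maps of $\Hy$), and the restriction of that inversion to $\dd D^3$ is the standard inversion in the circle $S \cap \dd D^3$.
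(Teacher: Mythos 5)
The paper gives no proof of this theorem at all: it is labeled ``Classical'' and used as a black box, so your argument can only be measured against the standard classical proof, and indeed your route is exactly that standard route (extend an isometry to $\dd\Hy$ by tracking endpoints of geodesic rays through the identification $UT(x)=\dd\Hy$, note the extension is an orientation-preserving conformal automorphism of $\cp^1$ and hence a M\"obius transformation, and obtain surjectivity by Poincar\'e extension, lifting circle inversions to hyperbolic reflections in geodesic planes). The basepoint-independence, the conformality step, the Poincar\'e-extension step, and the identification $\operatorname{PSL}_2(\C)=\operatorname{PGL}_2(\C)$ via complex square roots are all fine; you might also record explicitly that the correspondence is a group homomorphism, which is immediate because the boundary extension of a composition is the composition of the extensions.

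There is, however, one step whose justification is genuinely false: in the injectivity argument you claim that each of the three geodesics is fixed pointwise because ``an isometry of $\R$ fixing both ends at infinity is the identity.'' This is not true: every translation of $\R$ fixes both ends, and correspondingly a loxodromic element of $\I$ (for instance the one-parameter subgroup $\{z\mapsto az\}$, $|a|\neq 1$, whose amoeba is a geodesic) fixes both ideal endpoints of its axis while translating nontrivially along it. So fixing the two endpoints of a single geodesic does not pin that geodesic down pointwise, and as written the step fails. The conclusion is still reachable, but you must use the full hypothesis that $\bar\phi$ fixes \emph{every} point of $\dd\Hy$: for example, any $x\in\Hy$ lies on two distinct geodesics, each with both ideal endpoints on the absolute; both geodesics are invariant under $\phi$ since their endpoints are fixed, and two distinct geodesics meet in at most one point, so $\phi(x)=x$ and injectivity follows at once. (Alternatively, patch your ideal-triangle argument by fixing a point $q$ on a side $\gamma_{12}$ as the unique intersection of $\gamma_{12}$ with the invariant geodesic from $q$ to the third ideal vertex; note also that the three sides of an ideal triangle are pairwise disjoint in $\Hy$, so they themselves provide no intersection points.) With this repair the proof is complete and agrees with the classical argument the paper implicitly invokes.
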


\subsection{Groups $\I=\operatorname{PSL}_2(\C)$ and $\tilde\I=\operatorname{SL}_2(\C)$.}
The group of all M\"obius transformation can be identified with the projectivization $\operatorname{PGL}_2(\C)$ of the
linear group $\operatorname{GL}_2(\C)$ --  two linear transformation induce the same linear map if and only if one is a scalar
multiple of another. Note that as the complex numbers are algebraically closed,
the projectivization $\operatorname{PGL}_2(\C)$ of the general linear group $\operatorname{GL}_2(\C)$ coincides 
with the projectivization $\operatorname{PSL}_2(\C)=\operatorname{SL}_2(\C)/\{\pm 1\}$ of the group $\operatorname{SL}_2(\C)$ 

The group $\operatorname{SL}_2(\C)$ is the first non-trivial example of a complex simple Lie group. Its maximal compact
subgroup is the group of special unitary matrices $\operatorname{SU}(2)$. We have a diffeomorphism $\operatorname{SU}(2)\approx S^3$
as $\operatorname{SU}(2)$ acts transitively and without fixed points on the unit sphere $S^3$ in $\C^2$. We also have
a diffeomorphism 
$$\operatorname{SL}_2(\C)\approx T_*(\operatorname{SU}(2))\approx S^3\times\R^3$$
identifying $\operatorname{SL}_2(\C)$ with the tangent space $T_*(\operatorname{SU}(2))$ to its maximal compact subgroup.

In particular, $\operatorname{SL}_2(\C)$ is simply connected and coincides with the universal covering 
$\tilde\I$ of the group $\I=\operatorname{PSL}_2(\C)$, while $\I$ is the quotient of $\tilde\I$ by its center
(isomorphic to $\Z_2=\{\pm1\}$). Topologically we have 
\begin{equation}
\label{rp3}
\I\approx \rp^3\times\R^3.
\end{equation}
Both spaces, $\I$ and its universal covering $\tilde\I$ will be important fur us as 
ambient spaces. They are complex 3-folds and contain different subvarieties,
particularly, curves and surfaces. The goal of this paper is to look at geometry of these
subvarieties in the context of hyperbolic geometry, provided by presentation of $\I$
as the group of isometries of $\Hy$.

\subsection{Compactifications $\overline\I$ and $\hat\I$ of the 3-folds $\I$ and $\tilde\I$}
Theorem \ref{pgl2c} provides a convenient way to compactly $\I$. 
Indeed, the group $\I$ is identified with the non-degenerate $2\times 2$ matrices
after their projectivization. A matrix is degenerate, if its determinant is zero, i.e.
if the quadratic homogeneous polynomial $ad-bc$ vanishes. We get the following
proposition.
\begin{proposition}
We have
$$\I=\cp^3\setminus Q,$$
where $Q$ is a smooth quadric $\{ad-bc=0\}$ in the projective space $\cp^3\ni [a:b:c:d]$.
We also use notation $\dd\I=Q$. 
\end{proposition}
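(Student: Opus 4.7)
The plan is to identify $\cp^3$ directly with the projectivization of the space of all $2 \times 2$ complex matrices and then separate the invertible matrices from the degenerate ones by the vanishing of the determinant. By Theorem \ref{pgl2c} we already know $\I = \operatorname{PSL}_2(\C) = \operatorname{PGL}_2(\C)$, so it suffices to realize $\operatorname{PGL}_2(\C)$ as the open complement of the quadric $Q$.

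First I would note that the space $M_2(\C)$ of all $2 \times 2$ complex matrices is naturally $\C^4$ via the entries $(a,b,c,d)$. Removing the zero matrix and quotienting by the scaling action of $\C^\times$ gives exactly $\cp^3$ with homogeneous coordinates $[a:b:c:d]$. Under this identification a class $[a:b:c:d]$ descends to an element of $\operatorname{PGL}_2(\C)$ if and only if the underlying matrix is invertible, i.e.\ $ad - bc \neq 0$; and the condition $ad - bc = 0$ is well-defined on $\cp^3$ because $ad - bc$ is a homogeneous polynomial of degree $2$ in $(a,b,c,d)$. This immediately gives the set-theoretic equality $\I = \cp^3 \setminus Q$ with $Q = \{ad - bc = 0\}$.

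It remains to verify that $Q$ is a smooth projective hypersurface. I would compute the gradient of $ad - bc$ with respect to $(a,b,c,d)$, which equals $(d, -c, -b, a)$, and observe that this vector vanishes only when $a = b = c = d = 0$, a point which does not represent any class in $\cp^3$. Hence the affine cone over $Q$ is smooth away from the origin, and by the Jacobian criterion $Q$ is a smooth quadric surface in $\cp^3$. (As a sanity check one recognizes $Q$ as the Segre embedding of $\cp^1 \times \cp^1$, which is manifestly smooth.)

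There is no real obstacle here; the statement is essentially a repackaging of the classical identification $\operatorname{PGL}_2(\C) = \pp(M_2(\C)) \setminus \{\det = 0\}$ together with a one-line smoothness check. The only subtlety worth mentioning is that the passage from $\operatorname{GL}_2(\C)$ to $\operatorname{PGL}_2(\C)$ requires working with the full matrix space $M_2(\C)$ (not just $\operatorname{GL}_2(\C)$) before projectivizing, so that one obtains $\cp^3$ rather than just an open subset of it.
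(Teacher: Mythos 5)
Your proposal is correct and follows essentially the same route as the paper, which simply identifies $\I=\operatorname{PGL}_2(\C)$ with the projectivized non-degenerate matrices and observes that degeneracy is the vanishing of the homogeneous quadratic $ad-bc$. Your explicit Jacobian check of the smoothness of $Q$ (and the Segre remark) is a small addition that the paper takes for granted, not a different argument.
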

Note that we can recover the computation $\pi_1(\I)=\Z_2$ also from this proposition
as $\I$ is a complement of a smooth quadric in $\pp^3$. We set
$$\overline\I=\cp^3\supset\I$$
and use this space as the compactification of the group $\I$.
This viewpoint justifies the notation $\dd\I=Q$.

In its turn, the group $\tilde\I=\operatorname{SL}_2(\C)$ of $2\times 2$-matrices with determinant 1 is tautologically
identified with the affine quadric $$\tilde\I=\{(a,b,c,d)\in\C^4\ |\ ad-bc=1\}.$$
Its topological closure $\hat\I$ in $\cp^4\supset\C^4$ is a smooth 3-dimensional
projective quadric.
We use $\hat\I$ as the compactification of the (simply connected) group $\tilde\I$.

Note that the projectivization $\cp^3$ of $\C^4$ can be identified with the infinite
hyperplane $\cp^3_\infty=\cp^4\setminus\C^4$. 
Thus we may identify $\overline\I=\cp^3_\infty$.

\begin{proposition}
$\hat\I\cap \cp^3_\infty=Q=\dd\I\subset\overline\I$.
\end{proposition}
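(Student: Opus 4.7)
The plan is to pass from the affine equation $ad-bc=1$ defining $\tilde\I\subset\C^4$ to its homogenization in $\cp^4$ and then intersect with the hyperplane at infinity.

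First, I introduce homogeneous coordinates $[a:b:c:d:w]$ on $\cp^4$ such that the affine chart $\{w\neq 0\}\cong\C^4$ is given by setting $w=1$ and recovers the coordinates $(a,b,c,d)$ used to describe $\tilde\I$. The hyperplane at infinity is then $\cp^3_\infty=\{w=0\}$, and the identification $\overline\I=\cp^3_\infty$ is via $[a:b:c:d]\mapsto[a:b:c:d:0]$. Homogenizing the degree-$2$ polynomial $ad-bc-1$ yields $ad-bc-w^2$, so I would consider the projective hypersurface
\[
\widetilde{Q}=\{[a:b:c:d:w]\in\cp^4 \mid ad-bc-w^2=0\}\subset\cp^4.
\]
In the chart $w=1$ this cuts out precisely $\tilde\I$, so $\tilde\I\subset\widetilde{Q}$.

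Next I would argue that $\hat\I=\widetilde{Q}$. The polynomial $ad-bc-w^2$ is an irreducible quadratic form in five variables (its associated $5\times 5$ symmetric matrix is non-degenerate), so $\widetilde{Q}$ is a smooth, and in particular irreducible, projective variety of dimension $3$. The subset $\tilde\I\subset\widetilde{Q}$ is Zariski-open and non-empty, hence Zariski-dense in $\widetilde{Q}$; for an irreducible complex variety, Zariski-density implies density in the Euclidean topology. Consequently, the Euclidean closure $\hat\I$ of $\tilde\I$ in $\cp^4$ equals $\widetilde{Q}$.

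Finally, I compute the intersection with the hyperplane at infinity. Setting $w=0$ in the equation of $\widetilde{Q}$ gives
\[
\hat\I\cap\cp^3_\infty=\widetilde{Q}\cap\{w=0\}=\{[a:b:c:d:0]\mid ad-bc=0\},
\]
which under the identification $\overline\I=\cp^3_\infty$ is precisely the quadric $Q=\{ad-bc=0\}\subset\cp^3$, i.e.\ $\dd\I$. The only real step requiring care is the equality of the Euclidean closure with the projective hypersurface cut out by the homogenized equation; once irreducibility of $\widetilde{Q}$ is in place, this is standard, and the rest is a direct computation.
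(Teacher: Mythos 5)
Your proof is correct and follows essentially the same route as the paper: homogenize the affine equation $ad-bc=1$ and observe that on the hyperplane at infinity it becomes $ad-bc=0$, i.e.\ the quadric $Q$. The only difference is that you make explicit the step the paper leaves informal, namely that the Euclidean closure $\hat\I$ really is the whole projective quadric cut out by the homogenized equation, which you justify correctly via irreducibility and Zariski-density.
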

\begin{proof}
When we pass from $\C^4$ to $\cp^4$ we introduce a new coordinate that vanishes on $\cp^3_\infty$.
For an equation of degree $d$ in affine coordinates $a,b,c,d\in\C$ all monomials of order lower
than $d$ vanish when we approach $\cp^3_\infty$. In particular, the affine equation $ad-bc=1$ in $\C^4$
becomes a homogeneous equation $ad-bc=0$ in $\cp^3_\infty$.
\end{proof}

Note that $0\notin \tilde\I\subset\C^4$, so the central projection from $0$ defines
a map $\pi$ from the projective quadric $\hat\I$ to the infinite hyperplane $\overline\I$.
\begin{proposition}
The map
$$\pi:\hat\I\to\overline\I$$
is a double covering ramified along $Q\subset\overline\I$.
\end{proposition}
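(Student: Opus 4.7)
The plan is to present $\hat\I$ explicitly as a projective quadric in $\cp^4$ and read $\pi$ off from homogeneous coordinates. Adjoin a fifth homogeneous coordinate $t$ so that $\cp^4$ has coordinates $[a:b:c:d:t]$, with the standard affine chart $\C^4=\{t\neq 0\}$ and $\cp^3_\infty=\{t=0\}=\overline\I$. Homogenizing the equation $ad-bc=1$ gives
$$\hat\I=\{[a:b:c:d:t]\in\cp^4\,:\,ad-bc-t^2=0\}.$$
The origin $0\in\C^4$ corresponds to the point $[0:0:0:0:1]\in\cp^4$, and the central projection from it onto $\cp^3_\infty$ is simply $\pi([a:b:c:d:t])=[a:b:c:d]$. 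This formula is defined on all of $\hat\I$, because the only place where it could fail is at $[0:0:0:0:1]$, which is not on $\hat\I$ (as $0-0-1\neq 0$).

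Next, I compute the fibers of $\pi$. Fix $[a:b:c:d]\in\overline\I$ together with a representative $(a,b,c,d)$. Any preimage must be of the form $[a:b:c:d:t]$ with $t^2=ad-bc$. If $ad-bc\neq 0$, the two solutions $t=\pm\sqrt{ad-bc}$ give two distinct projective points: a projective identification would force a scalar $\lambda$ with $\lambda=1$ (from the first four coordinates, not all of which vanish) and $\lambda\sqrt{ad-bc}=-\sqrt{ad-bc}$, which is impossible when $ad-bc\neq 0$. If $ad-bc=0$, i.e.\ $[a:b:c:d]\in Q$, then $t=0$ is the unique solution. So set-theoretically $\pi$ is $2$-to-$1$ off $Q$ and $1$-to-$1$ over $Q$.

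Finally, I verify that the ramification is honest. At a point $q=[a_0:b_0:c_0:d_0:0]\in Q$ with, say, $a_0\neq 0$, dehomogenize by setting $a=1$. Near $q$, the quadric $\hat\I$ is cut out by $d-bc=t^2$, so $(b,c,t)$ serve as local coordinates on $\hat\I$ with $d=bc+t^2$, and in them $\pi$ reads $(b,c,t)\mapsto(b,c,bc+t^2)$. Switching the target coordinates near $q$ from $(b,c,d)$ to $(b,c,d-bc)$ --- valid because $Q$ is locally cut out by $d-bc=0$ --- turns $\pi$ into $(b,c,t)\mapsto(b,c,t^2)$, the standard double cover ramified along $\{t=0\}$, which corresponds precisely to $Q$. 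The only place where some care is needed is in verifying that the two preimages off $Q$ are genuinely distinct projective points and in choosing target coordinates that straighten $Q$; once this is done, the ramification picture follows immediately from the local normal form.
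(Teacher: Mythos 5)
Your proof is correct and takes essentially the same approach as the paper, whose one-line argument just notes $\pi(a,b,c,d)=\pi(-a,-b,-c,-d)$ on the affine quadric $ad-bc=1$ --- the same $\pm$ ambiguity you read off as $t=\pm\sqrt{ad-bc}$ in the homogeneous model $\{ad-bc=t^2\}\subset\cp^4$. Your extra local computation putting $\pi$ in the normal form $(b,c,t)\mapsto(b,c,t^2)$ near $Q$ verifies the ramification structure that the paper leaves implicit, but it is a refinement of detail, not a different method.
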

\begin{proof}
We have $\pi(a,b,c,d)=\pi(-a,-b,-c,-d)$, thus $\pi^{-1}([a:b:c:d])$ consists of two
points, unless $[a:b:c:d]\in Q$.
\end{proof}

\subsection{Map $\varkappa:\I\to\Hy$}
Let us fix the origin point $0\in\Hy$. As $\I$ is the group of isometries of $\Hy$
we can define the map
\begin{equation}
\label{varkappa}
\varkappa:\I\to\Hy
\end{equation}
by $$\I\ni z\mapsto z(0)\in\Hy.$$

\begin{proposition}
The map \eqref{varkappa} is a proper submersion. We have $\varkappa^{-1}(x)\approx \rp^3$, $x\in\Hy$.
\end{proposition}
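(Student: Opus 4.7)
The plan is to view $\varkappa$ as the orbit map of the transitive action of $\I$ on $\Hy$, identify the fiber over $0$ with the stabilizer subgroup, and then invoke standard facts about quotient maps of Lie groups by closed subgroups.

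First I would identify the fiber $\varkappa^{-1}(0)$ with the stabilizer $\operatorname{Stab}(0)\subset\I$ of the origin. An orientation-preserving isometry of $\Hy$ fixing $0$ is determined by its differential at $0$ (since $\Hy$ is geodesically complete and the exponential map at $0$ is surjective), and any element of $\operatorname{SO}(T_0\Hy)\cong\operatorname{SO}(3)$ extends to such an isometry. Hence $\operatorname{Stab}(0)\cong\operatorname{SO}(3)$, and via the double cover $\operatorname{SU}(2)\approx S^3\to\operatorname{SO}(3)$ we get $\operatorname{SO}(3)\approx\rp^3$. For an arbitrary point $x=g(0)\in\Hy$, one has $\varkappa^{-1}(x)=g\cdot\operatorname{Stab}(0)$, which is a left coset and therefore still diffeomorphic to $\operatorname{SO}(3)\approx\rp^3$.

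Next I would verify that $\varkappa$ is a submersion. Since $\I$ acts transitively on $\Hy$ by isometries and the action is smooth, the orbit map at $z\in\I$ decomposes as the left translation $L_z:\I\to\I$ (a diffeomorphism) followed by $\varkappa$. Surjectivity of $d\varkappa$ at the identity follows because the Lie algebra of $\I$ contains the infinitesimal generators of parabolic translations along any geodesic through $0$, spanning $T_0\Hy$; equivariance of $\varkappa$ under left translation then propagates this surjectivity to every point.

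For properness, I would use that $\varkappa$ factors through the quotient as $\I\to\I/\operatorname{SO}(3)\xrightarrow{\sim}\Hy$, where the second arrow is the orbit--stabilizer diffeomorphism. Since $\operatorname{SO}(3)$ is a closed subgroup, the quotient $\I\to\I/\operatorname{SO}(3)$ is a principal $\operatorname{SO}(3)$-bundle; compactness of the structure group immediately implies properness of this projection, hence of $\varkappa$. Concretely, covering any compact $K\subset\Hy$ by finitely many bundle-trivializing open sets reduces the question to the obvious fact that projection $U\times\operatorname{SO}(3)\to U$ has compact preimages of compact sets.

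The only step that requires a bit of care is the identification $\operatorname{Stab}(0)\cong\operatorname{SO}(3)$, and in particular that $\operatorname{SO}(3)$ acts faithfully and that every rotation of $T_0\Hy$ extends to an isometry; the main obstacle in a fully rigorous write-up is to do this without already invoking the symmetric-space structure of $\Hy$. Everything else (submersion via transitive Lie group action, properness via compact fibers of a principal bundle) is automatic from general Lie-theoretic principles.
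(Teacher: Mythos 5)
Your proposal is correct and follows essentially the same route as the paper, which simply identifies $\varkappa^{-1}(0)$ with the stabilizer $\operatorname{SO}(3)\approx\rp^3$ of the origin and appeals to homogeneity of $\Hy$ under $\I$ for the other fibers; you merely spell out the submersion and properness claims (equivariance plus the principal $\operatorname{SO}(3)$-bundle structure) that the paper leaves implicit. One terminological slip: the one-parameter subgroups translating $0$ along geodesics are \emph{hyperbolic} (transvections), not parabolic, though this does not affect the argument since transitivity of the action already gives surjectivity of $d\varkappa$ at the identity.
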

\begin{proof}
The fiber $\varkappa^{-1}(0)$ consists of isometries of $\Hy$ preserving the origin $0\in\Hy$.
This group coincides with the group $\operatorname{SO}(3)\approx\rp^3$ of isometries of the tangent space $T_0(\Hy)$.
As $\Hy$ is a homogeneous space for the group $\I$, the same holds for any other fiber $\varkappa^{-1}(x)$.
\end{proof}

\section{Amoebas and coamoebas in $\Hy$}
\subsection{Amoebas}
Let $V\subset\I$ be an algebraic subvariety. This means that  $V=\overline{V}\setminus {\dd\I}$ for  a projective subvariety $\overline{V}\subset\overline{I}=\cp^3$. Without loss of generality we may assume that $\overline{V}$ is the closure of $V$ in $\overline{\I}$.
\begin{definition}
The amoeba $$\am=\varkappa(V)\subset\Hy$$  is the image of $V$ under
the map $\varkappa$.
\end{definition}

This definition can be thought of as a hyperbolic (or non-commutative) counterpart of the amoebas
of varieties in $\tor$ defined in \cite{GKZ}. These amoebas are the primary geometric objects 
studied in this paper.

The map $\varkappa$ can be extended to the compactification $\overline\I\supset\I$ once
we extend the target $\Hy$ to its own compactification $$\bHy=\Hy\cup\dd\Hy.$$
Recall that an element of $\overline\I=\cp^3$ is a non-zero matrix 
$\begin{pmatrix}  a & b \\ c & d \end{pmatrix}$ up to multiplication by a scalar.
If a matrix is non-degenerate (rank 2) it is an element of $\I$, while
a degenerate non-zero (rank 1) matrix is an element of $\dd\I=\overline\I\setminus\I$.

A matrix $z\in\dd\I$ is a map $\C^2\to\C^2$ with a 1-dimensional kernel $A\subset\C^2$
and a 1-dimensional image $B\subset\C^2$, thus $A,B\in\cp^1$.
Note that a choice of such $A$ and $B$ uniquely determines $z\in\dd\I$
as the only remaining ambiguity is a scalar factor.
This gives us an isomorphism between the smooth projective quadric $Q=\dd\I$
and $\cp^1\times\cp^1$. Define the projections to the first and second factors
\begin{equation}\label{Qproj}
\pi_{\pm}:Q\to\cp^1
\end{equation}
by $\pi_-(z)=A$ and $\pi_+(z)=B$.

For $z\in\dd\I$ we define $\overline\varkappa(z)=\pi_+(B)$.
For $z\in\I$ we define $\overline\varkappa(z)=\varkappa(z)$.

\begin{proposition}
The map $$\overline\varkappa:\overline\I\to\bHy$$
is a continuous map from the complex projective 3-space.
\end{proposition}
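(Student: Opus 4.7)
The plan is to verify continuity of $\overline\varkappa$ separately on the open part $\I$, on the boundary quadric $\dd\I$, and across the interface between them, matching the piecewise definition. On $\I$ the map $\varkappa$ is a smooth submersion by the preceding proposition, and on $\dd\I\cong\cp^1\times\cp^1$ the map $\pi_+$ is a holomorphic coordinate projection, so only the boundary case is non-trivial: given a sequence $z_n\in\I$ with $z_n\to z_\infty\in\dd\I$ in $\overline\I$, one must show $z_n(0)\to B$ in $\bHy$, where $B=\pi_+(z_\infty)$.

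The strategy for this step is the Cartan (KAK) decomposition of $\operatorname{SL}_2(\C)$ together with the geometry of hyperbolic translations. Pick matrix representatives $M_n$ of $z_n$ and $M_\infty$ of $z_\infty$ with $M_n\to M_\infty$ in $\C^{2\times 2}$; here $\det M_n\neq 0$ while $\det M_\infty=0$. Since the $\Hy$-action is invariant under nonzero scaling of $M_n$, rescale by $1/\sqrt{\det M_n}$ to land in $\operatorname{SL}_2(\C)$ and decompose as
\[
\frac{M_n}{\sqrt{\det M_n}}=K_n\,\operatorname{diag}(\lambda_n,\lambda_n^{-1})\,K_n',\quad K_n,K_n'\in\operatorname{SU}(2),\ \lambda_n\geq 1.
\]
Passing to a subsequence, $K_n\to K$ and $K_n'\to K'$ in the compact group $\operatorname{SU}(2)$, and the rank-one degeneration of $M_\infty$ forces the ratio of singular values to blow up, so $\lambda_n\to\infty$.

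Because $\operatorname{SU}(2)$ projects to the $\I$-stabilizer of $0\in\Hy$, we then have $z_n(0)=K_n(A_{\lambda_n}(0))$ with $A_\lambda=\operatorname{diag}(\lambda,\lambda^{-1})$ a hyperbolic translation along a fixed geodesic. The main bookkeeping obstacle is to identify this geodesic's endpoint at infinity under the identification $\dd\Hy=\cp^1$ of Theorem \ref{pgl2c}: the attracting fixed point of $A_\lambda$ as a M\"obius transformation is $\infty=[e_1]\in\cp^1$. Consequently $A_{\lambda_n}(0)\to[e_1]$ in $\bHy$, and continuity of the extended $\I$-action on the compactified ball yields $z_n(0)\to K([e_1])$. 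Finally $[M_\infty]$ is a projective scalar multiple of $K\operatorname{diag}(1,0)K'$, whose image line equals $K\cdot\operatorname{span}(e_1)=K([e_1])=B$, so the limit matches $\pi_+(z_\infty)$ and does not depend on the subsequence chosen. A general sequence in $\overline\I$ converging to $z_\infty$ may have terms in both $\I$ and $\dd\I$; splitting it and applying the three cases separately completes the argument.
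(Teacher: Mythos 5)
Your proof is correct, but it follows a genuinely different route from the paper's. The paper argues via the eigenvalue decomposition: for $z\in\I$ close to $\dd\I$ it says one eigenvalue of the unimodular representative is very small and the other very large, marks the two fixed points $p,q$ of the M\"obius transformation on $\dd\Hy$, and observes that the origin is pushed into a small neighborhood of the attracting fixed point $q$; the identification of the limit with $\pi_+(z_\infty)$ is left implicit. You instead use the Cartan (KAK/singular value) decomposition $M_n/\sqrt{\det M_n}=K_n\operatorname{diag}(\lambda_n,\lambda_n^{-1})K_n'$, exploit that $K_n'$ fixes $0\in\Hy$, send $\operatorname{diag}(\lambda_n,\lambda_n^{-1})(0)$ along a fixed geodesic to $[e_1]$, and recover $B=\pi_+(z_\infty)$ from $M_\infty=c\,K\operatorname{diag}(1,0)K'$. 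Your version is more robust: proximity to $Q$ is exactly blow-up of the ratio of \emph{singular} values, whereas the ratio of \emph{eigenvalues} need not blow up (e.g.\ unipotent matrices $\begin{pmatrix}1 & t\\ 0 & 1\end{pmatrix}$ with $t\to\infty$ approach $Q$ with both eigenvalues equal to $1$), so the paper's argument, read literally, misses these parabolic-type degenerations, while yours treats all degenerations uniformly and pins down the limit point explicitly. What the paper's approach buys in exchange is brevity and a dynamical picture (the limit as the attracting fixed point), at the cost of the case analysis and the identification step that you carry out in full; your subsequence bookkeeping (compactness of $\operatorname{SU}(2)$ and of $\bHy$, independence of the limit from the subsequence) is exactly what is needed to make the sequential argument legitimate, since $\overline\I=\cp^3$ is metrizable.
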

\begin{proof}
If $z\in\I$ is close to $\dd\I$ then one of the two eigenvalues of the corresponding
unimodular linear map $\C^2\to\C^2$ is very small while the other is very large.
Let us mark the points $p,q$ corresponding to these eigenspaces on the
absolute $\dd\Hy=\cp^1$. The projective-linear transformation has these two points
as its fixed points. 

Since the eigenvalue corresponding to $q$ is very large, a
small neighbourhood of $q$ will contain the image of almost entire
$\dd\Hy$ (except for a small neighborhood of $p$). The same holds for the
extension of our projective-linear transformation to $\bHy$.
Thus the image of the origin will be contained in a small neighborhood of $q$,
so that $\overline\varkappa$ is continuous.
\end{proof}

\begin{coro}\label{closed-set}
The amoeba $\varkappa(V)\subset\Hy$ is a closed set.
\end{coro}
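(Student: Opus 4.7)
The plan is to deduce the corollary directly from the preceding proposition, which gives continuity of the extended map $\overline{\varkappa}:\overline{\I}\to\bHy$, combined with compactness of the projective closure $\overline{\I}=\cp^3$.

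First, I would observe that $\overline{V}\subset\overline{\I}=\cp^3$ is a projective subvariety, hence compact. By the continuous extension from the previous proposition, the image $\overline{\varkappa}(\overline{V})$ is a compact, and therefore closed, subset of $\bHy$.

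Next, the key identification to establish is
$$\overline{\varkappa}(\overline{V})\cap\Hy=\varkappa(V).$$
The inclusion $\supseteq$ is immediate since $\overline{\varkappa}$ agrees with $\varkappa$ on $\I$ and takes values in $\Hy$ there. For the reverse inclusion, recall from the construction of $\overline{\varkappa}$ that its restriction to $\dd\I=Q$ is defined by $z\mapsto\pi_+(z)\in\cp^1=\dd\Hy$. Hence $\overline{\varkappa}(\dd\I)\subset\dd\Hy$, so no point of $\overline{V}\cap\dd\I=\overline{V}\setminus V$ can contribute to the intersection with $\Hy$. Thus every point of $\overline{\varkappa}(\overline{V})\cap\Hy$ comes from a point of $V$.

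Combining these observations, $\varkappa(V)$ equals the intersection of a closed subset of $\bHy$ with $\Hy$, and is therefore closed in $\Hy$. The argument is essentially a formality given the previous proposition; the only substantive point, and the one worth stating explicitly, is the disjointness $\overline{\varkappa}(\dd\I)\cap\Hy=\emptyset$, which is built into the definition of $\overline{\varkappa}$ on the boundary quadric.
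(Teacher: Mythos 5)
Your proof is correct and follows exactly the route the paper intends: the corollary is stated as an immediate consequence of the continuity of $\overline\varkappa$ on the compact space $\overline\I=\cp^3$, with the observation that $\overline\varkappa(\dd\I)\subset\dd\Hy$ ensuring $\overline\varkappa(\overline V)\cap\Hy=\varkappa(V)$. The paper gives no separate proof, and your write-up is a faithful (and slightly more explicit) formalization of that argument.
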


\begin{definition}
For a subvariety $\overline V\subset\overline\I\approx\cp^3$ we 
define its compactified amoeba $$\bar\am\subset\bHy$$
as the image of $\overline V$ under $\overline\varkappa$.
\end{definition}

The following proposition is straightforward.
\begin{prop}\label{l-act}
The left action of $A\in\I$ on $V$ translates its amoeba by the isometry $A,$ i.e. $\varkappa(A\cdot V)=A(\varkappa(V)).$ 
\end{prop}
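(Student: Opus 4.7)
The plan is to unwind the definitions and use the fact that multiplication in $\I=\operatorname{PSL}_2(\C)$ corresponds to composition of isometries of $\Hy$, so evaluating at the chosen origin intertwines the group action on $\I$ with the isometric action on $\Hy$.

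First I would recall that, by definition, the map $\varkappa:\I\to\Hy$ sends $z\in\I$ to $z(0)\in\Hy$, where $0\in\Hy$ is the fixed origin chosen in the construction of $\varkappa$. Next, for any $z\in V$, I would compute
\[
\varkappa(A\cdot z)=(A\cdot z)(0)=A(z(0))=A(\varkappa(z)),
\]
which is valid because the product $A\cdot z$ in the group $\I$ is, tautologically, the composition of the isometries $A$ and $z$ viewed as elements of the isometry group of $\Hy$ (this is exactly the content of Theorem \ref{pgl2c}). Taking the union over all $z\in V$ on both sides then yields $\varkappa(A\cdot V)=A(\varkappa(V))$, which is the desired equality.

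There is essentially no obstacle here: the statement is purely formal once one remembers that $\varkappa$ was defined via the group action, and the only subtlety is notational, namely distinguishing the group multiplication $A\cdot z\in\I$ from the action $A(x)\in\Hy$. One should note also that this computation does not require $V$ to be algebraic, nor any closure or continuity considerations; the proposition holds pointwise on $\I$ and thus for any subset whatsoever. In particular, applying the same identity to the closure $\overline V\subset\overline\I$ together with continuity of $\overline\varkappa$ from the previous proposition would give the analogous equivariance statement $\overline\varkappa(A\cdot\overline V)=A(\overline\varkappa(\overline V))$ for the compactified amoeba, once $A$ is understood as acting on $\bHy$ by its canonical extension to the absolute.
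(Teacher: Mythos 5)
Your proof is correct and is exactly the argument the paper has in mind: the paper simply declares Proposition \ref{l-act} straightforward without writing out a proof, and your one-line computation $\varkappa(A\cdot z)=(A\cdot z)(0)=A(z(0))=A(\varkappa(z))$, valid pointwise since $\varkappa$ is evaluation at the origin and group multiplication is composition of isometries, is precisely the intended justification.
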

The right action $V\mapsto V\cdot A$ can significantly changes the shape of $\varkappa(V)\subset\Hy$. There is an obvious exception: the right action on a subvariety by a rotation $A\in \operatorname{SO}(3)$ around $0\in\Hy$ doesn't change its amoeba. If $V$ is irreducible then its amoeba is connected. 

\subsection{Coamoebas}
The topological diffeomorphism \eqref{rp3} can be upgraded once we recall that 
the factor $\rp^3$ is a compact real Lie group $G=\operatorname{SO}(3)$. The bi-invariant metric
on $G$ is well-defined up to a scalar. It coincides with the spherical metric on $\rp^3$.

Furthermore, the whole group $\I$ can be recovered as the {\em complexification} $G_{\C}$ of $G$.
Geometrically, $G_{\C}$ can be identified with  the total space $T_*G$ of the tangent bundle to $G$.
Namely, the tangent space to the unit element $1\in G$ is the Lie algebra $\mathfrak g$ of $G$
and can be thought of as infinitely small elements of $G$. Any element of $G_{\C}=T_*G$
can be represented as a product of an element of $G$ and an element of $\mathfrak g$.
The total space $G_{\C}$ can be given complex and group structures and comes with
the map $\iota:G_{\C}\to G$.

In algebraic terms, with the help of polar decomposition of matrices
we can uniquely write $\tilde{z}=\tilde{u}p$ for any $\tilde z\in\tilde\I$,
where $\tilde{u}$ is a unitary matrix and $p$ is a non-negatively definite hermitian matrix. 
Up to sign the matrix $\pm\tilde z$ gives an element of $\I$.
Its polar decomposition defines the unitary matrix $\pm\tilde u$ up to sign,
which in its turn can be considered as an element of $G$.
We define $\iota(\pm\tilde z)=\pm\tilde u$ and thus 
\begin{equation}
\label{iota}
\iota:\I=G_{\C}\to G=\operatorname{SO}(3)\approx\rp^3
\end{equation}
is a continuous map.

\begin{definition}
For a subvariety $V\subset\I$ we 
define its coamoeba $$\coam\subset G\approx\rp^3$$
as the image $\coam=\iota(V)$.
\end{definition}

\ignore{
We finish this subsection by interpreting the map $\iota$ in terms of hyperbolic geometry.
The Lie algebra $\mathfrak g$ of $G$ consists of skew-hermitian matrices. The exponent
map takes $\mathfrak g$ to $G$. The complexified Lie algebra decomposes
as $\mathfrak g\oplus i\mathfrak g$. The image of $ i\mathfrak g$ under the exponent complexification map
coincides with the fiber over $1\in G$ for the map $G_{\C}\to G$.

Let $v\in i\mathfrak g$.
The matrix $p=\exp(v)$ is positive-definite hermitian.
The vector $v$ also can be interpreted as a tangent vector to $0\in\Hy$ as
$v/i\in\mathfrak g$ is an infinitesimal rotation along an oriented real axis in $\R^3=T_0\Hy$.
The isometry of $\Hy$ corresponding to $p$ is the extension of
the parallel transport along the geodesic
path in the exponential image of the interval $[0,v]$. In particular,
$\varkappa(p)=\exp(v)$, where the last $\exp$ is taken in the sense of Riemannian geometry.
}

Let $z\in\I$ be an arbitrary isometry of $\Hy$ with $\varkappa(z)=x$.
A parallel transport along a geodesic path in $\Hy$ connecting $0$ and $x$ 
provides a preferred isometry between tangent spaces $T_0\Hy$ and $T_x\Hy$
and thus an element $p\subset\I$ corresponding to a unimodular positive-definite hermitian
matrix with $\varkappa(p)=x$. The isometry $z$ can be obtained by taking a composition of $p$
with a self-isometry of $T_0\Hy$ which corresponds to an orthogonal matrix $u\in G$.
Thus we recover polar decomposition in hyperbolic geometry terms.

\section{Amoebas of curves}
%
\subsection{Amoebas of lines}
The shape of the amoeba ${\am_l}=\varkappa(l\cap\I)$ of a line $l\subset\Cp=\I\cup Q$ depends on the position of $l$ with respect to the quadric $Q$: either $l$ lies on $Q$, is tangent to $Q$ or intersects it transversally in two points.   
 
In the case when the line lies on the quadric, the amoeba and the intersection of $l$ with $\I$ are both empty. To describe the image of $l$ under $\overbar\varkappa$ consider the same identification of $Q$ with $\mathbb{CP}^1\times\mathbb{CP}^1$ given by \eqref{Qproj}.
There are exactly two families of lines in $Q$ appearing as fibers of
$\pi_+$ and $\pi_-$.
If the line $l$ is a fiber of $\pi_+$ then
its image is a single point.
If $l$ is a fiber of $\pi_-$ then
$\overbar\varkappa$ projects $l$ isomorphically to $\partial\Hy.$

If $Q$ doesn't contain $l$ then they intersect either at one or two points. This cases give amoebas of quite different shape. Consider first a line $l$ which meets the quadric exactly at one point, i.e. $l$ is tangent to $Q$. This implies that the amoeba of $l$ is non-empty and touches $\partial\Hy$ at a single point. 

Recall that a horosphere is a surface in $\Hy$ such that it is orthogonal to any geodesic starting at some fixed point at the infinity $\dd\Hy$.

\begin{proposition}\label{p_hor}
If a line is tangent to $Q$ then its hyperbolic amoeba is a horosphere in $\Hy.$
Conversely, any horosphere in $\Hy$ is an amoeba of a line tangent to $Q$.
\end{proposition}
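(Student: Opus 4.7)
The plan is to verify the statement for one canonical tangent line by direct computation, and then propagate to every tangent line using Proposition \ref{l-act} together with the amoeba-invariance of the right $\operatorname{SO}(3)$-action.

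As the canonical case I take the projective closure $l_0$ of the unipotent subgroup $N = \{\begin{pmatrix} 1 & t \\ 0 & 1 \end{pmatrix} : t \in \C\}$. Restricting $ad-bc$ to $l_0$ gives a perfect square in the homogeneous parameter, so $l_0$ meets $Q$ only at $[0:1:0:0]$ with multiplicity two, hence is tangent there. In the upper half-space model of $\Hy$, the elements of $N$ are precisely the parabolic isometries fixing $\infty \in \dd\Hy$, acting by Euclidean translations $q \mapsto q + t$; the orbit of the origin $o = j$ is therefore the horosphere $\{(z,1) : z \in \C\}$ based at $\infty$ at Euclidean height $1$. Thus $\varkappa(l_0)$ is a horosphere.

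For an arbitrary line $l$ tangent to $Q$ at $z_0$ with $(A_0, B_0) := (\pi_-(z_0), \pi_+(z_0))$, I reduce to $l_0$ as follows. Left multiplication by $g \in \I$ on a rank-one matrix preserves its kernel, so the left $\I$-orbits on tangent lines are labeled by $A_0 \in \cp^1$, with $l_0$ sitting in the orbit over $A_0 = \infty$. Right multiplication by $g_2 \in \operatorname{SU}(2)$ preserves the amoeba (since $\operatorname{SU}(2) \to \operatorname{SO}(3)$ fixes $o$) and acts on $A_0$ via M\"obius transformations transitively on $\cp^1$, so I may choose $g_2$ so that $l g_2^{-1}$ has $A_0 = \infty$. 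A short matrix computation shows that the stabilizer of $l_0$ under the left $\I$-action is $N$ itself, so the left $\I$-orbit of $l_0$ has real dimension $6 - 2 = 4$, matching the real dimension of the space of tangent lines with fixed $A_0 = \infty$. This forces the orbit to exhaust that space, producing $g_1 \in \I$ with $g_1 l_0 = l g_2^{-1}$, so $l = g_1 l_0 g_2$. Applying Proposition \ref{l-act} and the right $\operatorname{SO}(3)$-invariance of amoebas, $\varkappa(l) = g_1 \varkappa(l_0)$, the image of the canonical horosphere under a hyperbolic isometry---again a horosphere.

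The converse is immediate from the transitivity of $\I$ on horospheres: any horosphere $H$ equals $g \varkappa(l_0)$ for some $g \in \I$, and therefore $H = \varkappa(g l_0)$ is realized as the amoeba of the tangent line $g l_0$. The main obstacle I anticipate is the orbit-filling step: having checked that the stabilizer of $l_0$ is the unipotent subgroup $N$, one still needs a connectedness check on the space of tangent lines with a fixed value of $A_0$ to upgrade the 4-dimensional orbit from an open dense subset to the full family.
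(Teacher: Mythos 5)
Your overall route coincides with the paper's: the paper also verifies the statement on the closure of the translation subgroup (its $l_1=\{z\mapsto z+b\}$ is your $N$), and then propagates by the $\I\times\operatorname{SO}(3)$ action --- left translation moves the amoeba by an isometry (Proposition \ref{l-act}), right multiplication by a rotation around $0$ leaves it unchanged --- with the converse following from transitivity of isometries on horospheres. Your model computation (tangency of $l_0$ at $[0:1:0:0]$ since $ad-bc$ restricts to a square, and the orbit of the origin under the parabolic subgroup being a horosphere based at $\infty$) is correct.

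The genuine gap is the orbit-filling step, and the repair you yourself propose does not close it. Your dimension count (stabilizer of $l_0$ under the left action equals $N$) only shows that the left $\I$-orbit of $l_0$ is \emph{open} in the $4$-dimensional family of properly tangent lines with $A_0=\infty$; connectedness of that family is not enough to conclude transitivity, because an open orbit can coexist with lower-dimensional orbits in a connected space (compare $\C^\times$ acting on $\C$: one open orbit plus a fixed point). To conclude you must either show that \emph{every} orbit in this family is open (every stabilizer $2$-dimensional) or verify transitivity directly. The direct check is short: the left action is transitive on the tangency points of the ruling $\{A_0=\infty\}\subset Q$ (it acts on the second factor by all M\"obius maps), the stabilizer of the tangency point $[0:1:0:0]$ is the Borel subgroup of upper-triangular classes, and $\begin{pmatrix} p & q\\ 0 & r\end{pmatrix}$ sends the properly tangent line spanned by $[0:1:0:0]$ and $[a:b:0:d]$ (with $ad\neq0$, so the line is encoded by $d/a\in\C^\times$) to the one encoded by $(r/p)(d/a)$, hence acts transitively on the $\C^\times$ of properly tangent directions. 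This is exactly what the paper's Lemma \ref{l_orb} supplies, via the $\C^\times\times U(1)$-action on $\pp(T_xQ)\approx\cp^1$ with its three orbits (the two ruling directions and the torus of properly tangent directions) combined with transitivity of $\I\times\operatorname{SO}(3)$ on $Q$; with that statement in hand, the rest of your argument goes through.
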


One can give the following equivalent statement for this proposition. A hyperbolic amoeba of a line in the 3-dimensional quadric $\tilde\I$ is a horosphere. Indeed, a curve in $\tilde\I$ is a line if and only if 
its image under
the two-fold covering $\tilde\I\rightarrow\I$
is a line tangent to $Q$.
%
We also note that both families of lines on $Q$ 
give amoebas, which can be interpreted as infinitely small and infinitely large horospheres. 
The proof of this proposition is given after the proof of Lemma \ref{lem-l2}.

An amoeba of a line transverse to $Q$ is generic. It must be nonempty, with two infinite points at its closure. 
A cylinder of radius $r\geq 0$ in $\Hy$ is defined to be a locus of points that are at the same distance $r$ from a given geodesic.  The degenerate cylinder for $r=0$ coincides with the geodesic itself.

\begin{proposition}\label{p_cyl}
If a line is not tangent to $Q$ then its hyperbolic amoeba is a (possibly degenerate) cylinder in $\Hy.$
Conversely, any geodesic in $\Hy$ as well as any cylinder of radius $r>0$ is the amoeba of a line in $\overline{\I}$.
\end{proposition}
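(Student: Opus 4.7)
My plan is to exhibit, for every line $l$ transverse to $Q$, a natural $1$-complex-parameter subgroup of isometries which preserves $l$ by left multiplication, and to identify the amoeba as a single orbit of this subgroup in $\Hy$; since such orbits are precisely geodesics and cylinders, both directions of the proposition follow at once.

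Let $\{M_1, M_2\} = l \cap Q$. Transversality (in particular $l \not\subset Q$) forces $B_1 := \pi_+(M_1) \neq \pi_+(M_2) =: B_2$: if they coincided, then $l$ would lie inside the $\pi_+$-fiber through $B_1$, which is a ruling line of $Q$. Let $\gamma \subset \Hy$ be the geodesic joining $B_1$ and $B_2$, and let $G_\gamma \subset \I$ be the connected subgroup of isometries fixing each of $B_1$ and $B_2$---abstractly $\C^\times$, realized geometrically as translations along $\gamma$ composed with rotations about $\gamma$. For $h \in G_\gamma$, the matrix $hM_i$ has the same one-dimensional kernel as $M_i$ and image $h(B_i) = B_i$, so $hM_i = M_i$ projectively, and hence $h \cdot l = l$. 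Working in coordinates where $\gamma$ is the vertical geodesic of the upper half-space model and $G_\gamma$ is the diagonal torus of $\operatorname{PSL}_2(\C)$, a short matrix computation shows that the induced action of $G_\gamma$ on $l \cong \cp^1$ is by M\"obius scalings fixing $M_1, M_2$, and is simply transitive on $l \cap \I \cong \C^\times$.

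Fixing any $z_0 \in l \cap \I$ and putting $p_0 = \varkappa(z_0)$, every $z \in l \cap \I$ is uniquely $hz_0$ for some $h \in G_\gamma$, and $\varkappa(hz_0) = h(p_0)$. Hence the amoeba is the $G_\gamma$-orbit of $p_0$, which equals $\gamma$ when $p_0 \in \gamma$ and otherwise the cylinder of radius $d(p_0, \gamma)$ about $\gamma$; this proves the first assertion. For the converse, given $\gamma$ and $r \geq 0$, use Proposition~\ref{l-act} to normalize $\gamma$ as the vertical axis, choose $p \in \Hy$ with $d(p, \gamma) = r$, and select $z_0 \in \I$ with $\varkappa(z_0) = p$, writing its entries as $a, b, c, d$. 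The projective line $l \subset \cp^3$ joining $[a{:}b{:}0{:}0]$ and $[0{:}0{:}c{:}d]$ contains $z_0 = [a{:}b{:}c{:}d]$, meets $Q$ transversally at exactly these two points (the restriction of $\det$ to $l$ being $\lambda\mu(ad-bc)$ with $ad-bc \neq 0$), and has $\pi_+$-images $\infty$ and $0$; applying the first part identifies the amoeba of $l$ with $G_\gamma \cdot p$, which is the desired cylinder or geodesic.

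The main technical point is the simple-transitivity claim for $G_\gamma$ acting on $l \cap \I$---i.e.\ that the induced $\C^\times$-action on the projective line $l \cong \cp^1$ is the full scaling around the fixed points $M_1, M_2$, not a proper subfamily. This is precisely where the hypothesis $B_1 \neq B_2$ (equivalently, transversality to $Q$) enters essentially, linking the Lie structure of $\I$ to the projective geometry of the line.
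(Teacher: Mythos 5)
Your proof is correct, and it rests on the same underlying mechanism as the paper's argument---the amoeba of a transverse line is a single orbit of the $\C^\times$-group of isometries with a common axis, acting by left multiplication (Proposition \ref{l-act})---but the implementation is genuinely different. The paper normalizes the line by the two-sided action: Corollary \ref{coro-lr} yields $l=Al_2B$, and since $l_2$ is itself the stabilizer of the standard axis, the amoeba of $l_2B$ is the $l_2$-orbit of $B(0)$, a cylinder, which is then carried over by the isometry $A$; this route needs both the $\pi_+$- and $\pi_-$-coordinates of $l\cap Q$ to be distinct and reuses Lemma \ref{lem-l2}. You avoid the right action and Corollary \ref{coro-lr} altogether: you construct the left stabilizer $G_\gamma$ of the given line from the points $B_i=\pi_+(M_i)$ alone (your kernel/image argument for $hM_i=M_i$ is essentially Lemma \ref{lr-act} in disguise), verify simple transitivity on $l\cap\I$ by the eigen-scaling $[\lambda:\mu]\mapsto[t^2\lambda:\mu]$, and read off the amoeba as the orbit $G_\gamma\cdot\varkappa(z_0)$. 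Your converse is also more explicit than the paper's: the line through $[a{:}b{:}0{:}0]$ and $[0{:}0{:}c{:}d]$ passing through a prescribed $z_0$ in effect re-proves the existence half of Proposition \ref{line-descr}. What the paper's route buys is brevity, since Corollary \ref{coro-lr} and Lemma \ref{lem-l2} are already in place; what yours buys is a self-contained argument that needs only $\pi_+(M_1)\neq\pi_+(M_2)$, stays entirely within the left action, and exhibits the orbit structure intrinsically on $l$ itself.
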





Both left and right actions of $\I$ on itself can be uniquely extended to $\overline{\I}=\Cp$. Clearly, $Q$ is invariant under these actions. 
\begin{lem}\label{lr-act}
The left action of $\I$ on $Q=\cp^1\times\cp^1$ acts by M\"obius transformations on the second factor and conserves the first factor. Namely, for $A\in I$ we have $A\cdot (\alpha,\beta)=(\alpha,A(\beta))$ where $A(\beta)\in\cp^1$ is the image of $\beta$ under the M\"obius action.

Similarly, the right action of $\I$ on $Q=\cp^1\times\cp^1$ conserves the second factor and acts on the first factor by M\"obius transformations $(\alpha,\beta)\cdot A=(A^{-1}(\alpha),\beta)$.
\end{lem}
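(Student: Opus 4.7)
The plan is to unpack the identification $Q\cong\cp^1\times\cp^1$ matrix-theoretically and then verify each assertion by a short linear-algebra computation. Recall that a point of $Q=\dd\I$ is the class of a nonzero rank-one $2\times 2$ matrix $z$, with $\pi_-(z)=\ker z\in\cp^1$ and $\pi_+(z)=\operatorname{im} z\in\cp^1$. The extended left and right actions of $\I$ on $\overline{\I}$ are given by the same matrix-multiplication formulas $z\mapsto Az$ and $z\mapsto zA$ on classes of matrices, since matrix multiplication is bilinear and hence induces well-defined projective maps on $\cp^3$ that extend the group law on $\I$.

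For the left action, take $A\in\I$ represented by an invertible matrix and $z\in Q$ represented by a rank-one matrix. Because $A$ is injective, $\ker(Az)=\ker z$, so $\pi_-(Az)=\pi_-(z)$ and the first $\cp^1$-factor of $Q$ is preserved. For the image, $\operatorname{im}(Az)=A(\operatorname{im} z)$, and the induced action of $A\in\operatorname{PGL}_2(\C)$ on the line $\operatorname{im} z\in\cp^1$ is, by definition, the M\"obius transformation associated to $A$. This gives the announced formula $A\cdot(\alpha,\beta)=(\alpha,A(\beta))$.

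The right action is dual. Since $A$ is surjective, $\operatorname{im}(zA)=z(A\,\C^2)=\operatorname{im} z$, so $\pi_+$ is unchanged. The kernel transforms as $\ker(zA)=A^{-1}(\ker z)$, where $A^{-1}$ now acts on $\cp^1$ by the inverse M\"obius transformation, whence $(\alpha,\beta)\cdot A=(A^{-1}(\alpha),\beta)$.

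I do not foresee a real obstacle here: the content of the lemma is simply the observation that left multiplication preserves kernels while modifying images, and right multiplication does the reverse. The only point that perhaps deserves an explicit word is that the matrix-product formulas on rank-one representatives do give the continuous extension of the group action from $\I$ to $\overline{\I}$; but this follows because left or right multiplication by a fixed invertible $2\times 2$ matrix is a linear automorphism of $\C^4$, hence a biholomorphism of $\cp^3$ agreeing with the corresponding translation on the open subset $\I\subset\cp^3$.
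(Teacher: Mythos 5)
Your proof is correct and follows essentially the same route as the paper: the paper's one-line proof simply recalls that on $Q$ the coordinate $\alpha$ is the projectivized kernel and $\beta$ the projectivized image of a rank-one matrix, leaving the kernel/image computation implicit, which you have spelled out. Your extra remark that matrix multiplication by a fixed invertible matrix is linear on $\C^4$ and hence gives the continuous extension of the action to $\overline{\I}=\cp^3$ is a harmless (and accurate) elaboration of what the paper takes for granted.
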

\begin{proof}
Recall that $Q$ corresponds to rank 1 matrices $P$, the coordinate $\alpha$ is the projectivization of the 1-dimensional kernel of $P$ while $\beta$ is the projectivization of the 1-dimensional image of $P$.
\end{proof}
\begin{coro}\label{coro-lr}
For any two lines $l,l'\subset\overline{\I}$ transverse to $Q$ there exist elements $A,B\in\I$ such that $l'=AlB$. The transformations $A,B$ are unique up to (left or right) multiplication by subgroups isomorphic to $\C^\times$.
\end{coro}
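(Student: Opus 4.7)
My plan is to reduce everything to the transitivity properties of the M\"obius action on $\cp^1$, using the identification $Q = \cp^1 \times \cp^1$ and Lemma \ref{lr-act}.

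First I would observe that a line $l \subset \overline{\I}$ transverse to $Q$ meets $Q$ in exactly two distinct points, which I write as $p_1 = (\alpha_1, \beta_1)$ and $p_2 = (\alpha_2, \beta_2)$ with $\alpha_1 \neq \alpha_2$ and $\beta_1 \neq \beta_2$; otherwise the two points would lie on a common ruling of $Q$, and since the unique line in $\cp^3$ through them is that ruling, $l$ would be contained in $Q$, contradicting transversality. In particular, a line transverse to $Q$ is uniquely determined by its two intersection points with $Q$, because two distinct points in $\cp^3$ determine a unique line.

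For existence I would exploit the fact that $\operatorname{PSL}_2(\C) = \I$ acts triply transitively on $\cp^1$, and in particular transitively on ordered pairs of distinct points. Writing $l' \cap Q = \{(\alpha'_1, \beta'_1), (\alpha'_2, \beta'_2)\}$, I choose $A \in \I$ with $A(\beta_i) = \beta'_i$ and $B \in \I$ with $B^{-1}(\alpha_i) = \alpha'_i$ for $i = 1, 2$. By Lemma \ref{lr-act}, the action $z \mapsto A z B$ sends $(\alpha, \beta) \in Q$ to $(B^{-1}(\alpha), A(\beta))$, so $AlB$ meets $Q$ in $\{(\alpha'_1, \beta'_1), (\alpha'_2, \beta'_2)\} = l' \cap Q$. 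By the previous paragraph, $AlB = l'$.

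For uniqueness, suppose also $A' l B' = l'$, and set $h = (A')^{-1} A \in \I$ and $h' = B (B')^{-1} \in \I$, so that $h l h' = l$. By Lemma \ref{lr-act}, this forces $h$ to permute $\{\beta_1, \beta_2\}$ and $(h')^{-1}$ to permute $\{\alpha_1, \alpha_2\}$ by the \emph{same} permutation (so that the pair of intersection points is preserved with matching pairing). Since the M\"obius subgroup fixing two distinct points of $\cp^1$ is a maximal torus isomorphic to $\C^\times$, the identity component of the stabilizer is $\C^\times \times \C^\times$, acting on $A$ by right multiplication and on $B$ by left multiplication; this is exactly the claimed ambiguity. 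The only additional freedom is a discrete choice of simultaneously swapping the two intersection points, which is subsumed by the choice of pairing made in the existence step.

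The main (mild) obstacle is bookkeeping: one must keep the left--right conventions of Lemma \ref{lr-act} straight, and one must use a consistent permutation of the two intersection points throughout, so that the resulting transformations realize the exact stabilizer described. Once that is organized, the statement reduces entirely to the classical transitivity of $\operatorname{PSL}_2(\C)$ on ordered pairs of distinct points of $\cp^1$ and to the $\C^\times$ structure of a generic point stabilizer.
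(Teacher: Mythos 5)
Your proof is correct and follows essentially the same route as the paper's: both identify a transverse line with its pair of intersection points with $Q$, note that transversality forces the $\pi_\pm$-projections to be pairs of distinct points in $\cp^1$, and then invoke transitivity of the M\"obius action on pairs of distinct points together with the $\C^\times$ stabilizer. Your write-up merely spells out the bookkeeping (and the discrete swap ambiguity) that the paper's terse proof leaves implicit.
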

\begin{proof}
The lines $l$ and $l'$ are determined by the pairs of their intersection points with $Q$. Since both lines are transverse to $Q$, each pair produces a pair of distinct points in $\cp^1$ under $\pi_+$ and $\pi_-$. M\"obius transformations act transitively on pairs of distinct points in $\cp^1$ with the stabiliser isomorphic to $\C^\times$.
\end{proof}

Consider the space of all lines in $\Cp$ tangent to $Q$. We have an action of $\I\times \operatorname{SO}(3)$ on this space, where the first factor $\I$ acts on the left and the second factor $\operatorname{SO}(3)\subset\I$ acts on the right. Lines in the same orbit of this action have congruent hyperbolic amoebas. We claim that there are only three different orbits for the action.
\begin{lemma}\label{l_orb}
Two families of lines lying on $Q$ and a set of all other lines tangent to $Q$ are the only orbits for the action of $\I\times \operatorname{SO}(3)$ on $Q.$
\end{lemma}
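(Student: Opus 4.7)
The plan is to first observe that a line in $\cp^3$ tangent to $Q$ either lies inside $Q$ or meets $Q$ in a single point with multiplicity two. Since $Q\cong\cp^1\times\cp^1$ is a smooth quadric, the lines contained in $Q$ form exactly two rulings, namely the fibers of $\pi_+$ and the fibers of $\pi_-$. So the three candidate orbits are well-defined and pairwise disjoint: two fibers from different rulings meet at a point and hence cannot be taken to one another by any $\cp^3$-automorphism, while a line not contained in $Q$ is clearly not in either ruling.

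Next, for the two rulings I would invoke Lemma \ref{lr-act} directly. The left $\I$-action fixes the first factor of $Q=\cp^1\times\cp^1$, so it preserves every fiber of $\pi_-$ setwise and permutes the fibers of $\pi_+$ by Möbius transformations on $\cp^1$. Since $\operatorname{PSL}_2(\C)$ acts transitively on $\cp^1$, the left action alone is transitive on the family of $\pi_+$-fibers. Symmetrically, the right $\operatorname{SO}(3)$-action is transitive on the family of $\pi_-$-fibers, because $\operatorname{SO}(3)=\operatorname{PSU}(2)$ already acts transitively on $\cp^1\approx S^2$. Thus each ruling is a single orbit.

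For the third class one may argue in two steps. First, using the combined action, move the tangency point $p\in Q$ to a chosen base point $p_0=(\alpha_0,\beta_0)$: the left action of $\I$ is transitive in the $\beta$-coordinate and the right $\operatorname{SO}(3)$-action is transitive in the $\alpha$-coordinate, so $(\I\times\operatorname{SO}(3))$ is transitive on $Q$. After this reduction one studies the stabilizer $\operatorname{Stab}(p_0)$ and its linear action on $T_{p_0}Q=T_{\alpha_0}\cp^1\oplus T_{\beta_0}\cp^1$. The $\I$-stabilizer of $\beta_0$ is a Borel subgroup whose derivative at $\beta_0$ realizes the full group $\C^\times$ of complex scalings on $T_{\beta_0}\cp^1$, while the $\operatorname{SO}(3)$-stabilizer of $\alpha_0$ is a circle acting by $U(1)$ on $T_{\alpha_0}\cp^1$. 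A line $l$ tangent to $Q$ at $p_0$ but not contained in $Q$ corresponds to a direction $[u:v]\in\cp(T_{p_0}Q)$ with $u\neq 0$ and $v\neq 0$ (the two excluded directions correspond precisely to the ruling fibers through $p_0$, since a line in $\cp^3$ is determined by a point and a tangent direction). The stabilizer thus acts on such mixed directions by $[u:v]\mapsto [e^{i\theta}u:cv]$ with $\theta\in\R$ and $c\in\C^\times$, and the orbit of any such $[u:v]$ is all of $\cp^1\smallsetminus\{[1:0],[0:1]\}$. This gives transitivity on the third class.

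The main technical point, and the one I would single out as the genuine step of the proof, is this last stabilizer calculation: checking that after fixing $p_0$ the residual freedom inside $\I\times\operatorname{SO}(3)$ is large enough ($\C^\times\times U(1)$ acting on the two tangent directions) to hit every mixed tangent direction in $\cp(T_{p_0}Q)$. Once this is verified, the trichotomy of orbits follows formally from steps one and two above.
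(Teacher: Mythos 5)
Your proposal is correct and follows essentially the same route as the paper: transitivity of $\I\times\operatorname{SO}(3)$ on $Q$, followed by the observation that the stabilizer of a point contains $\C^\times\times U(1)$ acting separately on the two factors of $T_{p_0}Q$, so that $\mathbb{P}(T_{p_0}Q)\approx\cp^1$ splits into two fixed points (the ruling directions) and one open orbit (the properly tangent lines). The only slip is the phrase that two lines from different rulings ``cannot be taken to one another by any $\cp^3$-automorphism'' (an automorphism preserving $Q$ can swap the rulings); what you actually need, and what Lemma \ref{lr-act} gives you anyway, is that the specific left/right action preserves each ruling separately.
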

 Thus, if we show that an amoeba of some particular properly tangent line to $Q$ is a horosphere then amoebas of all other lines of this kind will be horosphere.    

\begin{proof}
Let $x$ be a point in $Q$. Take a stabiliser subgroup for $x$ under the action of $\I\times \operatorname{SO}(3)$ and consider its action on the tangent space to $Q$ at the point $x$. It is clear that the stabiliser has a subgroup isomorphic to $\mathbb{C}^\times \times U(1)$ and acts separately on each multiplier in $T_x Q=\mathbb{C}\times\mathbb{C}$. The action of this subgroup on the projectivization $\mathbb{P}(T_xQ)$  for the tangent space can be obviously reduced to the standard action of $\mathbb{C}^\times$ on $\mathbb{CP}^1$. The last is stratified on three orbits: two points and a torus. The points correspond to the lines in the intersection of $Q$ and a plane tangent to $Q$ at $x$. The torus parametrizes the space of all lines properly tangent to $Q$ at $x$.

To finish the prove note that $\I\times \operatorname{SO}(3)$ acts transitively on $Q$ and evidently preserves the stratification for projectivization of a tangent space to $Q$ at each point. 
\end{proof}
 
Our goal now is to show that there exist a line tangent to $Q$ with an amoeba equal to a horosphere in $\Hy$. The main idea here is to use interactions of some specific subgroups of $\I$ to produce extra symmetries for their amoebas. 

As we saw before, the group $\I$ can be interpreted as the group of automorphisms for $\mathbb{CP}^1=\mathbb{C}\cup\{\infty\}$. A group $B$ of affine transformations on the complex line $\mathbb{C}$ is a $2$-dimensional subgroup of $\I$. It can be also defined to be a stabiliser of $\infty.$ In fact $B$ can be described up to conjugation as a Borel subgroup of $\I$.  Each element of $\I$ can be seen a M\"obius transformation $$z\mapsto{{az+b}\over{cz+d}},\qquad ad-bc\neq 0.$$ Such transformation is affine if and only if $c=0$ and is given by $z\mapsto az+b$. So the closure of $B$ is a plane in $\Cp$. 
 
Consider the following two subgroups in $B$: the subgroup $l_1=\{z\mapsto z+b\}$ of translations in $\mathbb{C}$ and the subgroup $l_2=\{z\mapsto az\}$  generated by homotheties and rotations around $0\in\mathbb{C}$. In the above notations $l_1$ is given by the equation $a=1$ and $l_2$ is given by $b=0$. So the closures for both subgroups are lines in $\Cp.$  It is also clear that $l_1$ is a normal subgroup of $B$ and so $l_2$ acts on $l_1$ by the conjugation, $l_1$ is a maximal affine subgroup and $l_2$ is a maximal torus in $\I$. Since $l_1$ and $l_2$ intersect by a unity and generate the whole group of affine transformations, $B$ is a semi-direct product of $l_1$ and $l_2$.

Now we are going to describe amoebas for these groups. First we have to note that an amoeba of any subgroup in $\I$ is smooth at each point because the group acts on its amoeba transitively by isometries.  

We start with $l_2.$ The point $0$ and $\infty$ in $\partial\Hy$ are the only points stabilized by $l_2.$ Denote by $\gamma\subset\Hy$ the geodesic connecting these points.
\begin{lemma}\label{lem-l2}
We have $\am_{l_2}=\gamma$.
\end{lemma}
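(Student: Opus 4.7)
The plan is to prove $\am_{l_2}=\gamma$ by a double inclusion, using only that each element of $l_2$ fixes the two points $0,\infty\in\dd\Hy$ together with the standard matrix form of $l_2$.

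First, for $\am_{l_2}\subseteq\gamma$: any $A\in l_2$ acts on $\cp^1=\dd\Hy$ by $z\mapsto az$ and therefore fixes $0$ and $\infty$. Extended to $\bHy$, it is an isometry of $\Hy$ fixing these two boundary points, so it preserves the unique geodesic $\gamma$ joining them. Under the canonical identification $UT(0_{\Hy})=\cp^1$ induced by the choice of origin, the points $0,\infty$ are antipodal vectors of $UT(0_{\Hy})$, so $\gamma$ passes through $0_{\Hy}$. Hence $\varkappa(A)=A(0_{\Hy})\in\gamma$, which yields the first inclusion.

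For $\gamma\subseteq\am_{l_2}$ I would work with explicit matrix representatives. Writing $a=re^{i\theta}$ with $r>0$, a lift of $z\mapsto az$ to $\operatorname{SL}_2(\C)$ is the diagonal matrix $\operatorname{diag}(\sqrt{a},1/\sqrt{a})$, which factors as the product of the unitary matrix $\operatorname{diag}(e^{i\theta/2},e^{-i\theta/2})\in\operatorname{SU}(2)$ and the positive-definite hermitian matrix $\operatorname{diag}(\sqrt{r},r^{-1/2})$. The unitary factor is a rotation around $\gamma$ which even fixes $\gamma$ pointwise, while the hermitian factor is a pure hyperbolic translation along $\gamma$ sending $0_{\Hy}$ to the point at signed distance $\log r$. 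As $r$ varies over $\R_{>0}$, this point sweeps out all of $\gamma$, which gives $\gamma\subseteq\am_{l_2}$.

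The main obstacle is purely notational: matching the algebra of diagonal matrices in $\operatorname{SL}_2(\C)$ with the hyperbolic geometry of $\gamma$. Once one has fixed compatible conventions so that $\operatorname{SU}(2)$ is the stabilizer of $0_{\Hy}$ and the positive-definite part of the polar decomposition underlying~(\ref{iota}) implements a pure hyperbolic translation along a geodesic through $0_{\Hy}$, both inclusions are immediate. No delicate argument is required; the lemma amounts to the observation that a maximal split torus in $\operatorname{SL}_2(\C)$ has a geodesic of $\Hy$ as its orbit through the origin.
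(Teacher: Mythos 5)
Your proof is correct and follows essentially the same route as the paper: the paper's (one-line) proof simply observes that $l_2$ consists precisely of the isometries of $\Hy$ preserving the geodesic $\gamma$, which is exactly your two inclusions with the transitivity along $\gamma$ left implicit. Your explicit polar decomposition of the diagonal matrices merely spells out the details the paper omits, so no substantive difference in approach.
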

\begin{proof}
The elements of $l_2$ correspond to isometries of $\Hy$ fixing the geodesic $\gamma$.
\end{proof}


The only point in $\dd\Hy$ which is preserved by $l_1$ is the point $\infty$.
The amoeba $\am_{l_1}$ is the horosphere tangent to $\infty$ and passing through $0$.
This observation  completes the proof of the Proposition \ref{p_hor}.
%
%

\begin{proof}[Proof of Proposition \ref{p_cyl}]
A line $l$ not tangent to $Q$ meets the quadric at two points $(q_1,p_1),\ (q_2,p_2)\in Q.$ 
Since $l$ is not contained in $Q$, we have $p_1\neq p_2\in\cp^1$ and $q_1\neq q_2\in\cp^1$.
Thus there exist M\"obius transformations sending pairs $(p_1,p_2)$ and $(q_1,q_2)$ to $(0,\infty)$.
Note that the line $l_2$ meets $Q$ at the points $(0,0)$ and $(\infty,\infty)$.
By Corollary \ref{coro-lr} there exist $A,B\in\I$ such that $l=Al_2B$. Since $l_2$ consist of isometries of $\Hy$ fixing $\gamma$, the amoeba $\am_{l_2B}$ is a cylinder around $\gamma$ of radius equal to the distance between $B(0)$ and $\gamma$. By Proposition \ref{l-act}, $\am_l$ is a cylinder obtained as the image of $\am_{l_2B}$ under the isometry $A$.
\end{proof}

\begin{prop}\label{line-descr}
Let $l$ be the line connecting $(q_1,p_1)$ and $(q_2,p_2)\in \cp^1\times\cp^1=Q$ with $p_1\neq p_2\in\cp^1=\C\subset\{\infty\}$ and $q_1\neq q_2,\in\cp^1=\C\subset\{\infty\}$.
\begin{enumerate}
\item
The amoeba  $\am_l$ is a geodesic line if and only if $$q_2=-1/\bar{q}_1.$$
In this case $\varkappa|_{l\cap\I}$ is a circle bundle over its image.

Conversely, for each geodesic line $\gamma\subset\Hy$ and a point $x\in\varkappa^{-1}(\gamma)$ there exists a unique line $l\ni x$ such that $\am_l=\gamma$.

\item
If $q_2=-1/\bar{q}_1$ then $\am_l$ is a non-degenerate cylinder whose radius depends only on the spherical distance between $q_2$ and $-1/\bar{q}_1$ (the antipodal point to $q_1\in\cp^1$). Dependance of the radius of this distance is monotone and goes to infinity when $q_2$ approaches $q_1$.
In this case $\varkappa|_{l\cap\I}$ is a smooth proper embedding to $\Hy$.

Conversely, for any non-degenerate cylinder $Z\subset\Hy$ and a point $x\in\varkappa^{-1}(Z)$ there exists a line $l\ni x$ such that $\am_l=Z$. 
\end{enumerate}
\end{prop}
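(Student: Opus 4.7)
The strategy is to reduce both statements to the standard maximal torus $l_2$ via Corollary~\ref{coro-lr} and then transport the explicit description $\am_{l_2}=\gamma$ from Lemma~\ref{lem-l2}.

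First I would write $l=A l_2 B$ with $A,B\in\I$. Since $l_2\cap Q=\{(0,0),(\infty,\infty)\}$, Lemma~\ref{lr-act} gives
$$l\cap Q \;=\; \{(B^{-1}(0),A(0)),\,(B^{-1}(\infty),A(\infty))\},$$
so after a possible relabelling of the two intersection points I may assume $B(q_1)=0$, $B(q_2)=\infty$, $A(0)=p_1$, $A(\infty)=p_2$. By Proposition~\ref{l-act}, $\am_l = A(\am_{l_2 B}) = A(l_2\cdot B(0))$, where $B(0)\in\Hy$ is the image of the hyperbolic origin under $B$. Because the subgroup $l_2\subset\I$ consists of isometries preserving $\gamma$ setwise (translations along $\gamma$ and rotations around it), the orbit $l_2\cdot B(0)$ equals $\gamma$ when $B(0)\in\gamma$, and otherwise equals the cylinder around $\gamma$ of radius $d(B(0),\gamma)>0$.

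Next I would translate the condition $B(0)\in\gamma$ into one on $(q_1,q_2)$. Since $B$ is an isometry, $d(B(0),\gamma)=d(0,B^{-1}(\gamma))$, and since $B^{-1}$ sends $\{0,\infty\}$ to $\{q_1,q_2\}$, the preimage $B^{-1}(\gamma)$ is the geodesic in $\Hy$ with ideal endpoints $q_1,q_2\in\dd\Hy$. Thus $\am_l$ is a geodesic precisely when the geodesic with endpoints $q_1,q_2$ passes through $0\in\Hy$, i.e.\ when $q_1$ and $q_2$ are antipodal on $\dd\Hy$. In the ball model, under the identification $\dd\Hy=\cp^1$ coming from the $\operatorname{SO}(3)$-action on $T_0\Hy$, the antipodal involution is $z\mapsto -1/\bar z$, giving the equivalence in~(1). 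For~(2), the same formula shows that $r=d(0,B^{-1}(\gamma))$ is an invariant of $(q_1,q_2)$ under the diagonal $\operatorname{SO}(3)$-action on $\cp^1\times\cp^1$, hence depends only on the spherical distance between $q_2$ and $-1/\bar q_1$; monotonicity and the limits $r\to 0$ (as $q_2\to -1/\bar q_1$) and $r\to\infty$ (as $q_2\to q_1$) follow from an explicit one-parameter computation.

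The fibre assertions follow from the $l_2$-equivariance of $\varkappa|_{l_2 B\cap\I}$: the stabiliser in $l_2$ of a point of $\gamma$ is the rotation circle, giving a circle bundle when $B(0)\in\gamma$, while the stabiliser of a point off $\gamma$ is trivial, giving a proper $l_2$-equivariant diffeomorphism onto the cylinder. For the converses, given a geodesic $\gamma'\subset\Hy$ and $x\in\varkappa^{-1}(\gamma')$, I would conjugate so that $\gamma'=\gamma$, write $x=g_0 h$ with $g_0\in l_2$ satisfying $g_0(0)=x(0)$ and $h\in\operatorname{SO}(3)$, and take $l=l_2 h$; uniqueness then follows from the classical fact that through a point of $\cp^3$ off two skew lines $L_0=\{0\}\times\cp^1$ and $L_\infty=\{\infty\}\times\cp^1$ there is a unique transversal. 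The analogous construction handles any non-degenerate cylinder $Z$. The main technical point is the identification of the antipodal involution on $\dd\Hy$ with $z\mapsto -1/\bar z$ on $\cp^1$, which rests on pinning down the $\operatorname{SO}(3)\cong\operatorname{PSU}(2)$-action---a brief polar-decomposition computation.
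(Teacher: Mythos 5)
Your treatment of the main equivalence is essentially the paper's own argument: both write $l=Al_2B$ via Corollary~\ref{coro-lr} and Lemma~\ref{lr-act}, identify $\am_l=A(l_2\cdot B(0))$ using Lemma~\ref{lem-l2} and Proposition~\ref{l-act}, and decide geodesic versus cylinder according to whether $B(0)$ lies on $\gamma$. Your transport of the distance, $d(B(0),\gamma)=d(0,B^{-1}(\gamma))$ with $B^{-1}(\gamma)$ the geodesic with ideal endpoints $q_1,q_2$, is a more explicit route to the antipodality criterion than the paper's terse decomposition of the right action, and your stabiliser analysis in $l_2$ supplies the circle-bundle and proper-embedding claims which the paper's proof does not spell out. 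The paper deduces monotonicity of the radius from uniqueness, whereas you invoke a direct one-parameter computation; both are acceptable.

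The one step that does not work as written is uniqueness in part (1). The two skew lines you name, $\{0\}\times\cp^1$ and $\{\infty\}\times\cp^1$, are fibres of $\pi_-$ (the $q$-ruling, the one moved by the right action); a line with amoeba $\gamma$ meets these fibres over an \emph{arbitrary} antipodal pair $(q_1,q_2)$ (e.g.\ the lines $l_2h$, $h\in\operatorname{SO}(3)$, all have amoeba $\gamma$ but hit different $\pi_-$-fibres), so they are not all transversals of your $L_0,L_\infty$ and the unique-transversal fact gives nothing. The correct pair is the fibres of $\pi_+$ over the two ideal endpoints of $\gamma$, i.e.\ $\overline\varkappa^{-1}(0)\cap Q$ and $\overline\varkappa^{-1}(\infty)\cap Q$ (in the $(q,p)$ ordering, $\cp^1\times\{0\}$ and $\cp^1\times\{\infty\}$). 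You also use, but do not state, that every line $l'$ with $\am_{l'}=\gamma$ actually meets both of these fibres: since $\overline\varkappa(\overline{l'})$ is compact it contains the two ideal endpoints of $\gamma$, and on $Q$ the map $\overline\varkappa$ coincides with $\pi_+$, so the two points of $l'\cap Q$ must have $\pi_+$-coordinates $0$ and $\infty$. With these two corrections the skew-lines argument does give uniqueness, and is a legitimate alternative to the paper's incidence argument (distinct lines through a point of $Q$ meet nowhere else). Alternatively, uniqueness falls out of your own normal form: every line with amoeba $\gamma$ is $l_2B$ with $B\in\varkappa^{-1}(\gamma)$, and $x\in l_2B$ forces $l_2B=l_2x$.
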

The spherical distance on $\cp^1\approx S^2$ is the distance in the metric preserved by the subgroup $\operatorname{SO}(3)\subset\I$ acting on $\cp^1$ by M\"obius transformations.
\begin{proof}
By the proof of Lemma \ref{lem-l2}, $\varkappa(l_2B)=\gamma$ if and only if $B\in\varkappa^{-1}(\gamma)$.
This is the case when the right action of $B$ decomposes to a transformation of $\Hy$ preserving the origin and a transformation of $\Hy$ preserving $\gamma$. Since the subgroup $\operatorname{SO}(3)\subset\I$ 
consists of isometries of $\Hy$ preserving the origin, the amoeba  $\varkappa(l)$ is a geodesic
if and only if $q_1$ and $q_2$ are antipodal points in the spherical metric on $\cp^1$, i.e. $q_2=-1/\bar{q}_1.$ 

By Proposition \ref{p_cyl} any cylinder in $\Hy$ is an amoeba of a line passing through $(q_1,p_1), (q_2,p_2)\in Q$. Multiplying this line by an appropriate element of $\operatorname{SO}(3)$ on the right we may ensure that $l$ contains any given point over its amoeba.
Distinct lines passing through $(q_1,p_1)$ cannot intersect at a point $x\neq (q_1,p_1)$. This implies uniqueness of a line with the same amoeba up to the right action of $\operatorname{SO}(3)$. In its turn, the uniqueness implies monotonicity of the cylinder radius as a function of spherical distance between $q_1$ and $q_2$.
\end{proof}

\subsection{Gauss maps $\gamma_\pm$} 
Let $\overline{C}\subset\overline{\I}=\cp^3$ be an irreducible spatial projective curve of degree $d$ and genus $g$ not contained in $\dd\I=Q$.
Define $C=\overline{C}\setminus\dd\I\subset\I$ and $\am_C=\varkappa(C\cap\I)\subset\Hy$.
Since $Q\subset\cp^3$ is a quadric, we get the following proposition for the compactified amoeba $\camc$.

\begin{prop}
The intersection $\camc\cup\dd\Hy=\overline{\varkappa}(\overline C\cap Q)$ consists of not more than $2d$ points. 
\end{prop}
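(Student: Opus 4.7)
The plan is to reduce the statement to a direct application of B\'ezout's theorem in $\cp^3$. The ingredient from the paper I will use is that $\overline{\varkappa}$ sends $\I$ into $\Hy$ and sends $Q=\dd\I$ to $\dd\Hy$ via $\pi_+$, so the preimage $\overline{\varkappa}^{-1}(\dd\Hy)$ is exactly $Q$. Together with the assumption $\overline C \not\subset Q$ (so that $\overline C$ is an honest degree-$d$ curve meeting $Q$ in a proper intersection), this immediately yields the set equality $\camc\cap\dd\Hy = \overline{\varkappa}(\overline C\cap Q)$, since the left side is by definition the intersection of $\overline{\varkappa}(\overline C)$ with $\dd\Hy$.

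The next step is to bound $|\overline C\cap Q|$. Since $\overline C$ is an irreducible projective curve of degree $d$ in $\cp^3$ and $Q\subset\cp^3$ is a hypersurface of degree $2$ not containing $\overline C$, B\'ezout's theorem applies: the scheme-theoretic intersection $\overline C\cap Q$ has total length $2d$, and in particular the underlying set has at most $2d$ points.

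Combining these two observations gives
\[
|\camc\cap\dd\Hy|=|\overline{\varkappa}(\overline C\cap Q)|\le |\overline C\cap Q|\le 2d,
\]
where the first inequality just uses that a map of sets cannot increase cardinality. There is no real obstacle: the only thing worth checking is that the hypothesis $\overline C\not\subset \dd\I$ ensures the B\'ezout count is finite, and that the image factor $\pi_+$ only further collapses points (so it can only decrease, not increase, the count). Hence the proposition follows.
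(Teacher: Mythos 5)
Your proof is correct and follows essentially the same route as the paper, which states the proposition as an immediate consequence of $Q$ being a quadric: the intersection of $\camc$ with $\dd\Hy$ is the image under $\overline\varkappa$ of $\overline C\cap Q$, and B\'ezout bounds that intersection by $2d$ points since $\overline C\not\subset Q$. Your additional remarks (that $\overline\varkappa^{-1}(\dd\Hy)=Q$ and that taking the image can only decrease the count) just make explicit what the paper leaves implicit.
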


We define $l_x\subset\overline{I}$ as the tangent line to $C$ at $x\in C$.
The intersection $l_x\cap Q$ is an unordered pair of (perhaps coinciding) points in $Q=\cp^1\times\cp^1$.
The projections of this pair under $\pi_-$ and $\pi_+$ define the elements
$$\gamma_-(x), \gamma_+(x)\in \operatorname{Sym}^2(\cp^1)=\cp^2.$$
By the removable singularity theorem, these maps uniquely extend to maps
\[
\gamma_-,\gamma_+:\overline C\to\cp^2
\]
called the right and left {\em Gauss maps}.

\begin{rem}\label{rem-Sym}
The maps $\gamma_\pm$ are non-commutative counterparts of the logarithmic Gauss map \cite{Ka-gauss}. They can be alternatively defined by taking the tangent direction of $C$ at the unit element $E\in\I$ after left or right translate of $C$ by $x\in\I$.
To see this, we identify the projectivization 
$
T_E(\I)\approx\cp^2
$
of the tangent space of $\I$ at the unit element $E\in\I$ with the space of lines in $\overline{\I}$ passing through $E$. The image $\pi_\pm(L\cap Q)$ can be considered as a point in 
$$
\operatorname{Sym}^2(\cp^1)=\cp^2=T_E(\I).
$$
By Lemma \ref{lr-act}, the left action conserves $\pi_+$ while the right action conserves $\pi_-$.
\end{rem}

\begin{prop}
The degree of $\gamma_\pm$ is $2d-2+2g$.
\end{prop}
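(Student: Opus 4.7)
The plan is to compute $\deg \gamma_-^*\mathcal{O}_{\cp^2}(1)$ by factoring $\gamma_-$ through the classical tangent Gauss map $\tau : \overline C \to \mathrm{Gr}(1,3)$, $x \mapsto [l_x]$, and then applying Schubert calculus on $\mathrm{Gr}(1,3)$. The case of $\gamma_+$ is symmetric, obtained by replacing $\pi_-$ with $\pi_+$ throughout.

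First I would identify the preimage of a generic hyperplane. Under the identification $\cp^2 = \operatorname{Sym}^2(\cp^1)$, hyperplanes are parametrized by $\cp^1$: to $p \in \cp^1$ corresponds $H_p = \{\,\{a,b\} \in \operatorname{Sym}^2(\cp^1) : p \in \{a,b\}\,\}$, the hyperplane dual to the point $\{p,p\}$ on the Veronese conic (these span the hyperplane system). By the very definition of $\gamma_-$,
$$\gamma_-^{-1}(H_p) = \{\, x \in \overline C : l_x \cap L_p \neq \varnothing\,\}, \qquad L_p := \pi_-^{-1}(p)\subset Q,$$
where $L_p$ is a ruling of $Q$, hence a line in $\cp^3$. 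Thus $\gamma_-$ is $\tau$ followed by the rational map $\mathrm{Gr}(1,3)\to\cp^2$, $L \mapsto \pi_-(L \cap Q)$, whose fibre over $H_p$ is the Schubert divisor $\sigma_1(L_p)$ of lines meeting $L_p$. Since $\overline C \not\subset Q$, only finitely many $l_x$ lie on $Q$, and we obtain $\gamma_-^*\mathcal O_{\cp^2}(1) = \tau^*\sigma_1$ as divisor classes on the normalization $\widetilde C$ of $\overline C$ (to which $\tau$ extends by the removable singularity theorem).

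It remains to compute $\deg\tau^*\sigma_1$ on $\widetilde C$ by Chern classes, with $g$ the geometric genus of $\overline C$. On $\mathrm{Gr}(1,3) = \mathrm{Gr}(2,4)$, with tautological rank-$2$ sub-bundle $S$, one has $\sigma_1 = c_1(S^*)$. The pullback $\tau^*S$ --- whose fibre at $x \in \widetilde C$ is the $2$-plane $\langle x \rangle + \langle w \rangle \subset \C^4$, with $w$ a lift of a tangent vector at $x$ --- fits in the exact sequence
$$0 \to \mathcal O_{\widetilde C}(-1) \to \tau^*S \to T_{\widetilde C} \otimes \mathcal O_{\widetilde C}(-1) \to 0$$
induced by the Euler sequence on $\cp^3$ restricted to $\widetilde C$. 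Taking first Chern classes, $c_1(\tau^*S) = -d + (2-2g-d) = -2d+2-2g$, so $\deg \gamma_\pm = c_1(\tau^*S^*) = 2d + 2g - 2$.

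The principal subtlety is the verification of this exact sequence at singular points of $\overline C$: at smooth points (and, more generally, at nodes of $\overline C$, where $\widetilde C \to \cp^3$ remains an immersion) it holds verbatim, whereas cuspidal singularities force the quotient $\tau^*S/\mathcal O_{\widetilde C}(-1)$ to pick up an extra effective divisor supported where the derivative of $\widetilde C \to \cp^3$ vanishes --- the classical Plücker correction. Under the standing assumption that $\overline C$ is smooth (tacit in the line-curve cases treated earlier in this section), the computation above is complete.
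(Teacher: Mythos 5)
Your proof is correct, and it reaches the count $2d-2+2g$ by a genuinely different computation than the paper. Both arguments share the first reduction: the pullback of a line $H_p\subset\operatorname{Sym}^2(\cp^1)=\cp^2$ is the locus of points whose tangent line meets the ruling $L_p=\pi_-^{-1}(p)\subset Q$ (the paper phrases this as tangency to a plane of the pencil through $L_p$). From there the paper simply applies Riemann--Hurwitz to the degree-$d$ projection of $C$ from the line $L_p$ to the pencil $\cp^1$, reading off the total ramification $2d-2+2g$; you instead factor $\gamma_-$ through the classical Gauss map $\tau$ to $\operatorname{Gr}(1,3)$, identify $\gamma_-^*\mathcal{O}_{\cp^2}(1)$ with $\tau^*\sigma_1=c_1(\tau^*S^\vee)$, and evaluate it by the Euler/principal-parts sequence $0\to\mathcal{O}(-1)\to\tau^*S\to T_{\widetilde C}\otimes\mathcal{O}(-1)\to 0$. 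The paper's route is shorter and more elementary; yours makes explicit that the statement is the classical ``rank'' formula for space curves and, usefully, localizes exactly where immersedness enters --- your remark about the Pl\"ucker correction at cusps applies equally to the paper's argument, since for a cuspidal normalization part of the Riemann--Hurwitz ramification sits at the cusps rather than at genuine tangencies and the stated formula (with $g$ the geometric genus) would fail; the paper leaves this hypothesis tacit, so your explicit flag is if anything more careful. One small stylistic point: the spanning remark about the $H_p$ being tangent lines of the Veronese conic is not needed, since all lines in $\cp^2$ are linearly equivalent and your Chern-class computation works at the level of divisor classes anyway.
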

\begin{proof}
Note that the image of the map $\cp^1\to\operatorname{Sym}^2(\cp^1)=\cp^2$ mapping $z\in\cp^1$ to the unordered pair consisting of $z$ and a fixed point $z_0\in\cp^1$ is 1. Thus the degree of $\gamma_\pm$ can be computed as the number of planes in $\cp^3$ in the pencil passing through a given line in $Q$ that are tangent to $C$. The proposition follows from the Riemann-Hurwitz formula for the corresponding map from $C$ to the pencil.
\end{proof}



Denote by $R\subset\cp^2$ the fixed locus of the antiholomorphic involution $$\sigma_R(u:v:w)\mapsto (\bar w:-\bar v:\bar u).$$
The holomorphic change of coordinates $(u:v:w)\mapsto (u+w:iv:i(w-u))$ identifies $\sigma_R$ with the involution of complex conjugation in $\cp^2$ and $R$ with $\rp^2$. In particular, $R$ is a totally real surface in $\cp^2$.
The following lemma is a counterpart of \cite[Lemma 3]{Mi00} for non-commutative amoebas.

\begin{lem}\label{crit-kap}
A smooth point $x\in C$ is critical for $\varkappa|_C$ if and only if $\gamma_-(x)\in R$.
\end{lem}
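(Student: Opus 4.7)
The plan is to exploit the left-equivariance of $\varkappa$ to reduce the statement to the identity element $E\in\I$ and then verify it by an explicit linear-algebra computation at $E$. Since $\varkappa(Az) = A(\varkappa(z))$ for every $A\in\I$, the map $\varkappa|_C$ is critical at $x$ if and only if $\varkappa|_{x^{-1}C}$ is critical at $E$. By Lemma \ref{lr-act} the left action of $\I$ on $Q$ preserves the first factor (the kernel projection $\pi_-$), so the tangent line $l_x$ and its left translate $x^{-1}l_x$ at $E$ produce the same unordered pair in $\cp^1$ under $\pi_-$, and therefore $\gamma_-(x)$ equals the value at $E$ of the Gauss map of $x^{-1}C$. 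Hence it suffices to prove, at $E$: a tangent direction $\xi\in\pp(T_E\I)=\cp^2$ lies in $\pp(\mathfrak{so}(3))$ if and only if $\gamma_-^E(\xi)\in R$.

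To identify the two sides explicitly, note first that $\varkappa^{-1}(0)=\operatorname{SO}(3)$, so $\ker d\varkappa|_E = \mathfrak{so}(3) \subset T_E\I = \mathfrak{sl}_2(\C)$. Working in the affine chart $d=1$ on $\cp^3\supset\I$ centered at $E=(1,0,0)$, a trace-free representative of a direction reads $[\dot a:\dot b:\dot c]$, and the real subalgebra $\mathfrak{su}(2)\cong\mathfrak{so}(3)$ is then cut out by the linear conditions $\dot a\in i\R$ and $\dot c=-\overline{\dot b}$. For $\gamma_-^E$ I would parametrize the line $L_\xi$ through $E$ in direction $\xi$ as $(1+t\dot a,\,t\dot b,\,t\dot c,\,1)$, substitute into $ad-bc=0$ to obtain the quadratic $t^2\dot b\dot c - t\dot a - 1 = 0$, apply $\pi_-$ (the kernel projection) to the two rank-one intersection points to obtain the pair $[1:-t_\pm\dot c]\in\cp^1$, and pass to $\operatorname{Sym}^2\cp^1=\cp^2$ via $\{[X_i:Y_i]\}_{i=1,2}\mapsto[Y_1Y_2:-(X_1Y_2+X_2Y_1):X_1X_2]$. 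Eliminating $t_\pm$ by Vieta's identities then yields the projective-linear isomorphism
\[
\gamma_-^E([\dot a:\dot b:\dot c]) = [-\dot c:\dot a:\dot b]\in\cp^2.
\]

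Setting $[u:v:w]:=[-\dot c:\dot a:\dot b]$, the $\mathfrak{su}(2)$ conditions $\dot a\in i\R$ and $\dot c=-\overline{\dot b}$ become $\bar v = -v$ and $\bar u = w$, which are precisely the equations $(u:v:w)=(\bar w:-\bar v:\bar u)$ defining the $\sigma_R$-fixed locus $R$ in the normalized representative; any other solution can be brought to this form by rescaling with a unit-modulus $\mu$ satisfying $\mu/\bar\mu=\lambda$, which is solvable since $|\lambda|=1$ is forced by the equations. Hence $\gamma_-^E(\xi)\in R$ if and only if $\xi\in\pp(\mathfrak{so}(3))$, which proves the lemma. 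The main obstacle is bookkeeping signs and normalizations through the $\operatorname{Sym}^2$ identification; conceptually the content of the lemma is just that $\pp(\mathfrak{so}(3))$ is sent by the projective isomorphism $\gamma_-^E$ onto the distinguished real form $\rp^2=R\subset\cp^2=\operatorname{Sym}^2\cp^1$.
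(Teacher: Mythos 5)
Your proof is correct, but it takes a different route from the paper. The paper reduces criticality at $x$ to criticality of $\varkappa$ restricted to the tangent line $l_x$ and then invokes Proposition~\ref{line-descr}: the amoeba of $l_x$ is a geodesic (all points critical, $\varkappa|_{l_x}$ a circle bundle) exactly when $\pi_-(l_x\cap Q)$ is an antipodal pair $q_2=-1/\bar q_1$, and a non-degenerate cylinder (no critical points, an embedding) otherwise; the fixed locus of $\sigma_R$ is then identified with the antipodal pairs in the chart $a=z+z'$, $b=zz'$. You instead bypass Proposition~\ref{line-descr} entirely: you use the equivariance $\varkappa(Az)=A(\varkappa(z))$ and the left-invariance of $\pi_-$ (Lemma~\ref{lr-act}) to translate everything to $E$, identify criticality with the condition that the complex tangent direction meets $\ker d\varkappa|_E=\mathfrak{su}(2)$ nontrivially (legitimate since $\varkappa$ is a submersion and $\mathfrak{su}(2)$ is a totally real form, so this is exactly membership in the $\rp^2$-image of $\mathfrak{su}(2)$ in $\cp^2$), and then compute the Gauss map at $E$ explicitly as the projective-linear map $[\dot a:\dot b:\dot c]\mapsto[-\dot c:\dot a:\dot b]$, checking that it carries this $\rp^2$ onto $R$. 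I verified the computation (the Vieta elimination and the kernel $[1:-t\dot c]$ are right, and the resulting linear formula remains valid in the degenerate cases $\dot b\dot c=0$ and for tangent lines by continuity). The only caveat is conventional: your $\operatorname{Sym}^2(\cp^1)=\cp^2$ identification uses signed symmetric functions, whereas the paper's proof uses $a=z+z'$, $b=zz'$; the two differ by a projective-linear change of coordinates ($v\mapsto -v$, possibly composed with $u\leftrightarrow w$ depending on how $\cp^1$ is identified with $\C\cup\{\infty\}$) which commutes with $\sigma_R$ and hence preserves $R$, so nothing is affected, but you should say this to match the paper's $R$ literally. What your approach buys is an explicit closed formula for $\gamma_-$ at the identity and an argument in the same spirit as the paper's later Theorem~\ref{thm-Gaussleft} for surfaces (criticality as $P$-reality of the translated tangent space); what the paper's approach buys is the geometric interpretation that criticality corresponds precisely to the tangent line having a degenerate (geodesic rather than cylindrical) amoeba, reusing results already established for lines.
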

\begin{proof}
The point $x$ is critical for $\varkappa|_C$ if and only if it is critical for the restriction of $\varkappa$ to the tangent line $l_x\subset\overline\I$ to $C$ at $x$. By Proposition \ref{line-descr} this is determined by the type of the amoeba of $l_x$. If the amoeba of $l_x$ is a geodesic line then all points of $l_x$ are critical for $\varkappa|_{l_x}$. If the amoeba of $l_x$ is a non-degenerate cylinder then  all points of $l_x$ are regular for $\varkappa|_{l_x}$.

In the affine chart $\C^2\ni (a,b)$ corresponding to $u\neq 0$ we have the identification $\C^2=\operatorname{Sym}^2(\C)$ with $a=z+z'$, $b=zz'$ for $z,z'\in\C$. 
The antiholomorphic involution $\sigma_R$ is given by the involution $z\mapsto -1/\bar{q}_1$. By Proposition \ref{line-descr} its fixed locus corresponds to the lines whose amoebas are geodesic lines.
\end{proof}

\begin{theorem}\label{ga-crit}
Let $\Omega$ be a domain in $\mathbb{C}$ and $\phi\colon\Omega\rightarrow\I$ be a holomorphic embedding. If $\varkappa\circ\phi$ is critical at every point then $\phi$ parametrizes a part of a line in $\I$ projecting by $\varkappa$ on a geodesic. 
\end{theorem}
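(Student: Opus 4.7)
The plan is to use Lemma \ref{crit-kap} to translate the pointwise criticality hypothesis into a constancy statement for the Gauss map $\gamma_-$ along $\phi(\Omega)$, and then to use elementary incidence geometry on $\cp^3$ to conclude that all the tangent lines of $\phi$ coincide. Concretely, since $\phi$ is a holomorphic embedding, $\phi(\Omega)$ is a smooth analytic curve in $\I$; the proof of Lemma \ref{crit-kap} is local and uses only the tangent line, so it applies at every $t\in\Omega$ and gives $\gamma_-(\phi(t))\in R$. Thus the holomorphic map $\gamma_-\circ\phi:\Omega\to\cp^2$ takes values in the totally real submanifold $R\subset\cp^2$.

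The first key step is the rigidity observation that a holomorphic map from a connected complex domain into a totally real submanifold is constant: the differential is $\C$-linear with image in $TR$, which by totally-realness contains no nonzero $\C$-subspace and hence vanishes. Therefore $\gamma_-(\phi(t))\equiv\{q_1,q_2\}$ for a fixed pair in $R$, and by the description of $R$ this pair satisfies the antipodality condition $q_2=-1/\bar{q}_1$; in particular $q_1\neq q_2$.

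Geometrically, this says each tangent line $l_{\phi(t)}$ meets both of the disjoint rulings $L_i=\{q_i\}\times\cp^1\subset Q$, $i=1,2$. Since $L_1,L_2$ are skew lines in $\cp^3$, through every point of $\cp^3\setminus(L_1\cup L_2)$ there passes a unique line meeting both of them, so there is a holomorphic map $F:\cp^3\setminus(L_1\cup L_2)\to L_1\times L_2$ whose fibers are exactly these connecting lines. The composition $F\circ\phi$ is defined, since $\phi(\Omega)\subset\I\subset\cp^3\setminus Q$, and $\phi'(t)$, being tangent to $l_{\phi(t)}$ which equals the fiber of $F$ through $\phi(t)$, lies in $\ker dF_{\phi(t)}$. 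Hence $F\circ\phi$ is constant, so $\phi(\Omega)$ is contained in a single line $M$ of the family. Because $\gamma_-(M)=\{q_1,q_2\}\in R$, Proposition \ref{line-descr}(1) yields that $\varkappa(M)$ is a geodesic, completing the argument.

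The main obstacle, in my view, is the rigidity step for holomorphic maps into totally real submanifolds; it is a small observation but precisely what converts the pointwise criticality hypothesis into genuine global rigidity. Everything else is incidence bookkeeping based on the identification $Q=\cp^1\times\cp^1$ and the classical theory of skew lines in $\cp^3$.
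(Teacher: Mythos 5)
Your proof is correct and follows essentially the same route as the paper: Lemma \ref{crit-kap} places $\gamma_-\circ\phi$ in the totally real surface $R$, holomorphicity forces this Gauss map to be constant, and constancy (together with Proposition \ref{line-descr}) pins $\phi(\Omega)$ to a single line whose amoeba is a geodesic. The only difference is that you make explicit the final implication --- via the fibration of $\cp^3\setminus(L_1\cup L_2)$ by transversals to the two skew rulings through $q_1$ and $q_2=-1/\bar q_1$ --- which the paper's proof only asserts ("$\phi(\Omega)$ is everywhere tangent to the same line"), and your justification of that step is sound.
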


\begin{coro}
The amoeba $\am_C\subset\Hy$ of a non-singular irreducible curve $C\subset\cp^3$ is smoothly immersed at its generic point.
\end{coro}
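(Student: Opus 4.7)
The plan is to show that the critical locus of $\varkappa|_C$ is a proper (real-analytic) subset of $C$, so that at every point of its open dense complement the map $\varkappa|_C$ is an immersion, and hence $\am_C=\varkappa(C)$ is a smoothly immersed real surface in $\Hy$ near the corresponding image point.

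First I would identify the critical locus analytically. By Lemma \ref{crit-kap}, the critical set $\Sigma\subset C$ of $\varkappa|_C$ (at smooth points of $C$, which is all of $C$ by hypothesis) equals $\gamma_-^{-1}(R)$, where $R\subset\cp^2$ is a totally real surface. Hence $\Sigma$ is a closed real-analytic subset of the complex curve $C$. Thus either $\Sigma$ has empty interior in $C$, in which case the regular locus of $\varkappa|_C$ is open and dense and the corollary follows immediately from the implicit function theorem applied locally, or $\Sigma$ contains a non-empty open subset $U\subset C$.

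Next I would rule out the second possibility (except in a trivially handled degenerate case) using Theorem \ref{ga-crit}. Since $C$ is non-singular, any such $U$ admits a holomorphic parametrization $\phi\colon\Omega\to U\subset\I$ by a domain $\Omega\subset\C$, which is automatically a holomorphic embedding. By the assumption, $\varkappa\circ\phi$ is critical at every point of $\Omega$, so Theorem \ref{ga-crit} forces $\phi(\Omega)$ to lie on a projective line $l\subset\overline{\I}$ whose amoeba is a geodesic. Because $C$ is an irreducible algebraic curve meeting the algebraic curve $l$ in a set with non-empty interior, analytic continuation (or Bezout, since $l$ is a line) gives $C\subset l$, and then irreducibility forces $C=l$. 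In that exceptional case $\am_C$ is a geodesic of $\Hy$, which is smoothly embedded, so the conclusion holds trivially.

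The main conceptual obstacle is the step just described: one must pass from "critical everywhere on an open real-analytic piece of $C$" to "$C$ is globally contained in a single line $l$ of geodesic type". Theorem \ref{ga-crit} handles the local obstruction (everything in $U$ lies on one line), and the two remaining ingredients are (i) that a non-singular irreducible projective curve which coincides with a line on an open set must coincide with it globally, and (ii) that in that exceptional situation the statement is vacuous because the amoeba is already a smoothly embedded geodesic. All other cases being disposed of, $\Sigma$ is a proper real-analytic subvariety of $C$, its complement is open and dense, and at any point of that complement the real differential of $\varkappa|_C\colon C\to\Hy$ has full rank $2$, exhibiting $\am_C$ as a smoothly immersed surface near the image.
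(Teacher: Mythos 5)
Your proposal is correct and follows the paper's intended route: the corollary is stated as an immediate consequence of Theorem \ref{ga-crit} (itself resting on Lemma \ref{crit-kap}), with the dichotomy ``critical locus nowhere dense, hence immersed at generic points'' versus ``critical on an open set, forcing $C$ to be a line with geodesic amoeba.'' Your write-up merely makes explicit the steps the paper leaves implicit --- the real-analyticity of $\Sigma=\gamma_-^{-1}(R)$, the Bezout/irreducibility argument giving $C=l$, and the trivially handled geodesic case --- all of which are sound.
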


\begin{proof}[Proof of Theorem \ref{ga-crit}]
By Lemma \ref{crit-kap}, if $\varkappa\circ\phi$ is critical then its image is contained in the totally real surface $R$. Thus $\varkappa\circ\phi$ is constant and $\phi(\Omega)$ is everywhere tangent to the same line.
\end{proof}

\section{Tropical limits of amoebas of curves}
\subsection{Tropical limits for
curves in $\ctor$}
Let us recall how tropical limits appear in the context of conventional (commutative) amoebas, i.e. amoebas of algebraic varieties $V\subset(\C^\times)^n$, see \cite{IMS}.
The Lie group $\ctor$ is commutative, its maximal compact subgroup is the real torus $(S^1)^n$.
The map 
\[
\Log_t:\ctor\to\R^n, \ \Log_t(z_1,\dots,z_n)=(\log_t|z_1|,\dots,\log_t|z_n|)
\]
takes the quotient by this subgroup. The image $\Log(V)\subset\R^n$ is called the amoeba of $V$ for $t=e$, using the notation $\Log=\Log_e$.

\begin{defn}
An unbounded family $\{t_\alpha\}$ of real numbers $t_\alpha>1$, $\alpha\in A$, is called a
{\em scaling sequence}. 
\end{defn}
Scaling sequences allow to define tropical limits $\limtrop$ for families of various geometric objects parameterised by $A$ (here we particularly refer to \cite{ItMi} and \cite{KaMi} for examples and details). Such families are called {\em scaled families}.
In the simplest situation, if $\{z_\alpha\}_{\alpha\in A}$ is a family of complex numbers we define
\[
\limtrop z_\alpha=\lim\limits_{t_\alpha\to\infty}\log_{t_\alpha}|z_\alpha|.
\]
This limit may be finite, i.e. an element of $\R$, infinite, i.e. $-\infty$ or $+\infty$, or not exist at all.
Since $\R\cup\{\pm\infty\}\approx [0,1]$ is compact, there exists an unbounded subfamily $B\subset A$ such that $\limtrop z_\beta\in\R\cup\{\pm\infty\}$ exists, $t_\beta\to+\infty$, $\beta\in B$. 

If $V_\alpha\subset\ctor$, $\alpha\in A$, is a family of closed (in the Euclidean topology) sets (e.g. algebraic curves of the same degree) we define
\[
\limtrop V_\alpha=\lim\limits_{t_\alpha\to\infty}\Log_{t_\alpha}(V_\alpha)\subset\R^n,
\]
where the latter limit is considered in the sense of the Hausdorff metric in all compact subsets of $\R^n$.

By an {\em open finite graph} we mean the complement $\Gamma=\bar\Gamma\setminus\dd\Gamma$ of a subset $\dd\Gamma$ of the set of 1-valent vertices of a finite graph $\bar\Gamma$. 
\begin{defn}
A tropical curve is a finite metric graph $\Gamma$ with a complete inner metric together with a choice of the {\em genus function}
$g:\operatorname{Vert}(\Gamma)\to\Z_{\ge 0}$
from the set of vertices of $\Gamma$ such that $g(v)\neq 0$ if $v$ is a 1-valent vertex.
A vertex $v$ of $\Gamma$ is called {\em essential} if its genus $g(v)$ is positive or its valence is greater than 2.
Two tropical curves are considered to be the same if there exists an isometry between them such that a vertex of positive genus corresponds to a vertex of the same genus. Clearly, essential vertices must correspond to essential vertices under such a correspondence.
\end{defn}
Recall that a metric is inner if the distance between any two points is given by the smallest length of a connecting path. Such metric carries the same information as specifying the lengths of all edges.
Completeness of the metric is equivalent to the condition that the open leaf-edges, i.e. the edges adjacent to $\dd\Gamma\subset\bar\Gamma$, have the infinite lengths.

Note that the simplest tropical curve $I\approx\R$ has a single (double-infinite) edge and no vertices. It is obtained from the interval graph (a finite graph with two vertices and a connecting edge) by removing both 1-valent vertices. A circle $E_\tau=\R/\tau\Z$ of perimeter $\tau>0$ (in the inner metric) is an example of a compact tropical curve without essential vertices. It is called a (compact) tropical elliptic curve of length $\tau$. All other connected tropical curves have at least one essential vertex.
\begin{defn}
A parameterised tropical curve in $\R^n$ is a continuous map $h:\Gamma\to\R^n$ from a tropical curve $\Gamma$ such that the following two conditions hold.
\begin{itemize}
\item
The restriction $h|_E$ to each edge $E\subset\Gamma$ is a smooth map whose differential sends the unit tangent vector to $E$ (with a chosen orientation) to an integer vector $u(E)\in\Z^n$.
\item
At every vertex $v\in\Gamma$ the {\em balancing condition}
\begin{equation}\label{balancing}
\sum\limits_E u(E)=0
\end{equation}
hold. Here the sum is taken over all edges $E$ adjacent to $v$ oriented away from $v$.
\end{itemize}
The image $Y=h(\Gamma)\subset\R^n$ can be viewed as a graph embedded to $\R^n$. For a generic point $y$ of an edge $E_y\subset Y$ the inverse image $h^{-1}(y)$ is a finite set contained inside the edges of $\Gamma$. 
We define the {\em weight} 
$$w(E_y)=\sum\limits_{x\in h^{-1}(y)} w(E_x),$$
where $E_x$ is the edge containing $x$, and $w(E_x)\in \Z_{\ge 0}$ is the largest integer number such that $u(E_x)/w(E_x)$ is integer. By the balancing condition, $w(E_y)$ does not depend on the choice of $y$.
The image $Y\subset\R^n$ with the weight data is called the {\em unparameterised tropical curve}.
\end{defn}

For an algebraic curve $V_\alpha\subset\ctor$ we may define its degree $\deg(V_\alpha)\in\Z_{\ge 0}$ as the projective degree of its closure in $\cp^n\supset\ctor$. There is a corresponding notion for tropical curves in $\R^n$.
Namely, we orient each unbounded edge $E$ of an unparameterised tropical curve $Y\subset\R^n$ toward infinity and define $\deg(E)$ to be zero if all the coordinates of $u(E)\in\Z^n$ are non-positive and to be the maximal value of these coordinates otherwise, see \cite{Mi-notices}. We define $\deg Y \in\Z_{\ge 0}$ as the sum of the degrees of all its unbounded edges.

\begin{theorem}[Unparameterised tropical curve compactness theorem
\cite{Mi05}]\label{thm-unparam}
If $V_\alpha\subset\ctor$, $t_\alpha\to\infty$, $\alpha\in A$, is a scaled family of curves of degree $d$ then there exists a scaling subsequence $t_\beta\to\infty$, $\beta\in B\subset A$, and an unparameterised tropical curve $Y\subset\R^n$ of degree $\deg Y\le d$ such that $\limtrop V_\beta=Y$.
\end{theorem}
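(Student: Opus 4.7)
The plan is to combine a compactness argument for Hausdorff limits of amoebas with the identification of such limits with non-archimedean amoebas, and then to apply the Bieri--Groves structure theorem. First I would projectivise: let $\overline V_\alpha\subset\cp^n$ be the closure of $V_\alpha$, of degree $d$. Since the Chow variety $\operatorname{Chow}_{1,d}(\cp^n)$ is compact, after passing to a scaled subsequence $B\subset A$ I may assume the Chow points $[\overline V_\beta]$ converge. Simultaneously, by compactness of the Hausdorff topology on closed subsets of any fixed ball in $\R^n$ together with a diagonal argument along an exhaustion, I may assume $\limtrop V_\beta = Y$ exists for some closed set $Y\subset\R^n$. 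The task is then to recognise $Y$ as an unparameterised tropical curve of degree at most $d$.

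The next step is to encode the scaled family into a single non-archimedean object. Let $K$ be the field of convergent Puiseux series in a formal parameter $s$, equipped with the standard valuation $v$ with $v(s)=1$. Using the Chow coordinates to pick algebraic defining equations for the $\overline V_\beta$ depending on the scaling parameter, one assembles a single curve $\mathcal V\subset(K^\times)^n$ of degree $d$. Kapranov's theorem, extended via Bieri--Groves to arbitrary subvarieties of the torus, identifies the Hausdorff limit $Y$ with the non-archimedean amoeba $v(\mathcal V)\subset\R^n$, and the latter is a finite rational polyhedral complex of pure dimension $1$. Its edges carry primitive integer directions $u(E)\in\Z^n$ together with positive integer weights $w(E)$, and the balancing condition \eqref{balancing} at each vertex follows from the standard tropical intersection-theoretic fact that the order of vanishing of any monomial along $\mathcal V$ is locally constant.

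For the degree bound I would analyse $Y$ at infinity. An unbounded ray $E$ of $Y$ arises from a branch of $\mathcal V$ whose closure in $\cp^n$ meets the toric boundary, and the $i$-th coordinate of $u(E)$ records the order of vanishing of $z_i$ along that branch. The sum $\sum_E w(E)\cdot\max_i u(E)_i$, taken over unbounded rays $E$ with at least one positive coordinate, equals the intersection of $\overline{\mathcal V}$ with an effective divisor representing the hyperplane class, and is therefore bounded by $d$. This is exactly the inequality $\deg Y\le d$.

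The main obstacle is arranging the passage from a mere scaled sequence of curves with a Hausdorff-convergent family of amoebas to a single non-archimedean curve whose tropicalisation realises the limit. This requires choosing Chow coordinates depending algebraically on the scaling parameter --- so that one is effectively working with an algebraic family of curves over a punctured disc --- and verifying Kapranov's identification in a form uniform in this family. Once this bridge is set up, the structural properties of $Y$ (integer slopes, balancing, and the degree bound) follow from standard tropical geometry, so the technical heart of the argument lies in the uniform version of Kapranov's theorem rather than in any of the combinatorial output.
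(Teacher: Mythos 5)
The paper itself does not prove this theorem; it is imported from \cite{Mi05}, so your proposal must stand on its own, and as written it has a genuine gap at exactly the step you yourself call ``the main obstacle''. A scaled family is an \emph{arbitrary} assignment $\alpha\mapsto V_\alpha$ with $t_\alpha\to\infty$: nothing is algebraic, or even continuous, in the parameter. Consequently there is in general no algebraic family over a punctured disc (hence no single curve $\mathcal V$ over the Puiseux series field) whose fibres at the given parameters are the given curves, and Kapranov/Bieri--Groves, uniform or not, only applies to algebraic families over a valued field. Worse, the data you propose to extract by compactness --- convergence of the Chow points $[\overline V_\beta]$ in the complex topology of the Chow variety --- simply does not determine $\limtrop V_\beta$. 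For a concrete failure, take the lines $V_\alpha=\{z_1+z_2=t_\alpha^{\lambda}\}\subset\tordva$: for every $\lambda>0$ the Chow points of the closures converge to the same limit (the line at infinity), while $\limtrop V_\alpha$ is the tropical line with vertex at $(\lambda,\lambda)$, which depends on $\lambda$. So after your two subsequence extractions you have not pinned down $Y$ by any algebraic limit object, and the proposed bridge to a non-archimedean amoeba cannot be built; the theorem's entire content is precisely this compactness statement for non-algebraic scaled families, so the proposal in effect assumes the hard part.

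The repair is to extract compactness from the \emph{tropical} coefficient data rather than from the Chow variety: for each $\beta$ record the vector of numbers $\log_{t_\beta}|\cdot|$ of the coefficients of defining equations (or of the Chow form), normalize, and pass to a subsequence convergent in the compactified real parameter space; then use the quantitative fact that $\Log_{t}(V)$ lies in an $O(1/\log t)$-neighborhood of the tropical object attached to these real coefficient vectors (Kapranov/Passare--Rullg{\aa}rd-type estimates, with a reduction to hypersurfaces, e.g.\ via projections or the Chow form, for curves in $\ctor$ with $n>2$). This is the route of \cite{Mi05}. Your final combinatorial steps --- rational slopes, weights, balancing, and the bound $\deg Y\le d$ via the unbounded edges and the hyperplane class --- are fine once such a limit object is in hand; it is only the passage from the arbitrary scaled sequence to that object that your argument does not supply.
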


\ignore{
\subsubsection{Tropical limit for parameterized curves}
In this setting it is convenient to present $V_\alpha\subset\ctor$ as the image of a proper holomorphic map $f_\alpha:C_\alpha\to\ctor$. 

Let $U\subset\R^n$ be an open set and $K\subset h^{-1}(U)$ be a connected component.  
We may consider $K$ as a graph with open (non-compact) leafs. The genus of $K$ is computed as $\dim H_1(K(U))$ plus the genera of all vertices contained in $K(U)$. Each leaf $l$ of $K$ is a half-open edge and can be oriented towards the non-compact end. The image of the unit tangent vector in that direction is called the {\em degree} $d(l)\in\Z^n$ of $l$.

A connected component $K_\alpha\subset f_\alpha^{-1}(U)$ is an open subsurface of $C_\alpha$.
If it has a finite topological type then each end $l_\alpha$ of $C_\alpha$ corresponds to a proper embedding
of a half-open cylinder $S^1\times [0,1)$ to $K$. The complex orientation of $K$ determines the orientation of the loop $\lambda_\alpha\subset K$ given by the image of $S^1\times\{0\}$. The class 
$d(l_\alpha)=[f_\alpha(\lambda_\alpha)]\in H_1(\ctor;\Z) =\Z^n$
is called the  {\em degree} of $l_\alpha$.

\begin{defn}
We say that a parameterized tropical curve $h:\Gamma\to\R^n$ is the tropical limit of $f_\alpha$ and write $h=\limtrop f_\alpha$ if $h(\Gamma)=\limtrop f_\alpha(C_\alpha)$ (as an unparameterized tropical curve) and
for every open set $U\subset\R^n$ and sufficiently large $t_\alpha$ we have a 1-1 correspondence between the connected components $K_{\alpha}(U)\subset(\Log_{t_\alpha}\circ f_\alpha)^{-1}(U)$ and the connected components $K(U)\subset h^{-1}(U)$ satisfying to the following properties.
\begin{enumerate}
\item The genus of $K(U)$ {\em (computed as $\dim H_1(K(U))$ plus the genera of all vertices contained in $K(U)$)} equals to the genus of the connected open surface $K_\alpha(U)$.
\item There exists a 1-1 correspondence between the ends $l$ of $K(U)$ and the ends $l_\alpha$ of $K_\alpha(U)$ preserving their degrees, i.e. such that $d(l)=d(l_\alpha)$.
%
%
%
\item
The correspondence is well-behaved under the inclusion $U'\subset U\subset\R^n$. This means that if
$K^U_{t_\alpha}\subset (\Log_{t_\alpha}\circ f_\alpha)^{-1}(U)$ and 
$K^{U'}_{t_\alpha}\subset (\Log_{t_\alpha}\circ f_\alpha)^{-1}(U')$ are connected components corresponding to the components $K\subset h^{-1}(U)$ and $K'\subset h^{-1}(U')$ such that $K'\subset K$ then $K^{U'}_{t_\alpha}\subset K^U_{t_\alpha}$.
\end{enumerate}
\end{defn}

This definition allows us to define the tropical limit for a family of points $z_\alpha\in C_\alpha$ as a point of $\Gamma$.
Namely, we say that $x\in\Gamma$ is the tropical limit $\limtrop z_\alpha$ if $h(x)=\lim\limits_{t_\alpha\to\infty}\Log\limits_{t_\alpha} f_\alpha(z_\alpha)$ and for any open set $h(x)\subset U\subset \R^n$ and sufficiently large $t_\alpha$ the point $z_\alpha$ belongs to the component of $(\Log_{t_\alpha}\circ f_\alpha)^{-1}(U)$ corresponding to the component of $h^{-1}(U)$ containing $x$.

Theorem \ref{thm-unparam} can be easily adapted to the case of parameterized curves, see \cite{Mi05}.
\begin{theorem}[Compactness theorem for parameterized curves]
If $f_\alpha:C_\alpha\to\ctor$, $\alpha\in A$, is a scaled family of parameterized curves of degree $d$ and genus $g$ then there exists a scaling subsequence $t_\beta$, $B\subset A$, and a parameterized tropical curve $h:\Gamma\to\R^n$ of degree $d(h)\le d$ and such that $\limtrop f_\beta=h$.
\end{theorem}
In this theorem the graph $\Gamma$ does not have to be connected.
}

\subsection{Phase-tropical limits over tropical curves in $\R^n$}
In this setting it is convenient to present $V_\alpha\subset\ctor$ as the image of a proper holomorphic map 
\begin{equation}\label{falpha}
f_\alpha:C_\alpha\to\ctor
\end{equation}
from a Riemann surface of finite type $C_\alpha$, $\alpha\in A$.

\newcommand{\bm}{\overline{\mathcal{M}}}
\newcommand{\bu}{\overline{\mathcal{U}}}
\ignore{
This map uniquely extends to 
\[
\bar f_\alpha:\overline C_\alpha\to\cp^n
\]
from a closed Riemann surface $\overline C_\alpha$ such that $\overline C_\alpha\setminus C_\alpha$ is finite. For simplicity, we assume that all $C_\alpha$ and thus $\overline C_\alpha$ are connected.

Suppose that the degree of $\bar f_\alpha$ 
is $d$ and the genus of $\overline C_\alpha$ is $g$. Then by the compactness theorem for stable curves \cite{KoMa}, we may find a subfamily $\bar f_\beta$, $\beta\in B$, converging to a stable curve 
\[
\bar f_\infty:\overline C^f_\infty\to\cp^n.
\]
\newcommand{\ucal}{\overline{\mathcal U}_{g,d}(\cp^n)}
Here the stable curve $\overline C^f_\infty$ is a connected, but possibly reducible nodal curve that can be viewed as the limit of smooth irreducible curves $\overline C_\beta$.

Namely, the universal curve $F:{\mathcal U}_{g,d}(\cp^n)\to\cp^n$ of degree $d$ and genus $g$ in $\cp^n$ can be compactified to $\overline F:\ucal\to\cp^n$. The map $\bar f_\alpha$, $\alpha\in A\cup\infty$, defines the embedding $\iota_\alpha:\overline C_\alpha\to\ucal$ such that $\bar f_\alpha=\overline F\circ \iota_\alpha$.
Each point $z\in \overline C^f_\infty$ can be approximated by $z_\beta\in\overline C^f_\beta$, $\beta\in B$, so that $\lim\limits_{t_\beta\to\infty} \iota_\beta(z_\beta)=\iota_\infty(z)$.
By continuity of $\overline F$ we have $\lim\limits_{t_\beta\to\infty} \bar f_\beta(z_\beta)=\bar f_\infty(z)$.



Using the inverse image $K_{0}=\bar f^{-1}(\ctor)\subset \overline C^f_\infty$ we get the limiting curve
$$\bar f_\infty|_{K_0}:K_0\to\ctor$$
for the family $f_\beta:C_\beta\to\ctor$ in small neighborhoods of all compact sets in $\ctor$.

Note that $K_0$ must be empty if the images $f_\beta(C_\beta)$ tropically converge to an unparameterized tropical curve $Y\subset\R^n$ not containing the origin $0\in\R^n$. In the same time, if $0\in Y$ then the (possibly reducible) curve $C^0_\infty$ may or may not be empty. If we compose the curves $f_\beta:C_\beta\to\ctor$ with some multiplicative translates $\tau_\beta\in\R^n_{>0}\subset\ctor$ and consider the inverse image of $\ctor$ under the limiting map for the translated family then we can make different parts of $\overline C^f_\infty$ visible in this way.

It is useful to consider the limit of $C_\alpha$ as abstract punctured Riemann surfaces.
}

Suppose that $C_\alpha$ is connected for all $\alpha\in A$, and that the genus $g$ of $C_\alpha$ and the number $k$ of its punctures does not depend on $\alpha$.
By a nodal curve of genus $g$ with $k$ punctures we mean a connected but possibly reducible nodal curve, such that its topological smoothing is a (smooth) curve of genus $g$ with $k$ punctures.
Assume that $2-2g-k<0$.
A stable curve is a nodal curve such that  each of its components is either of positive genus, or is a sphere adjacent to at least three nodes or punctures.
In other words, a connected nodal curve is not stable if it contains a punctured spherical component passing through a single node or a non-punctured spherical component passing through one or two nodes.
Such components can be contracted while staying in the class of connected nodal curves and thus any nodal curve can be modified to a canonical stable curve.
Given $g$ and $k$, the space of stable curves is compact, and admits a universal curve over it, see \cite{Mum}. If we distinguish different punctures, i.e. mark them by numbers $1,\dots,k$, then the universal curve is known as $\bu_{g,k}$, it admits a continuous map
\begin{equation}\label{uncurve}
\bu_{g,k}\to\bm_{g,k}
\end{equation}
over the space of the stable curves so that the fiber over each curve $C\in\bm_{g,k}$ is this curve itself.
\ignore{
As it was noticed and exploited in \cite{KoMa}, there is a similar notion of stable parameterized curves. Namely, a parameterized proper curve $f:C\to\ctor$ from a nodal curve $C$ of genus $g$ with $k$ punctures is called stable if each component of $C$ contracted to a point by $f$ is either of positive genus, or is a sphere adjacent to at least three nodes or punctures.
In other words, we do not put any restrictions on the components of $C$ mapped nontrivially by $f$ and put the same restrictions on components contracted to points as before.
}

\ignore{
The {\em toric degree} $\delta=\{\delta_j\}_{j=1}^k$ of a map $f:C\to\ctor$ consists of integer vectors $\delta_j\in\Z^n$ given by the classes in $H_1(\ctor)=\Z^n$ of small loops going around the $j$th puncture of $C$ in the positive direction. If $f$ is proper we have $\delta_j\neq 0$. 
}

By a closed nodal curve we mean a nodal curve without punctures.
Given a (nodal) curve $C$ of genus $g$ with $k$ punctures we may consider the corresponding closed curve $\overline C$ with $k$ {\em marked points}
by filling the point at each puncture and marking it. 
The map $f$ uniquely extends to
\begin{equation}\label{barf}
\bar f:\overline C\to\cp^n
\end{equation}
which we call nodal curve of genus $g$ with $k$ marked points. Its degree is the sum of the degrees of its components.
The curve \eqref{barf} is stable 
if every component of $\overline C$ contracted to a point by $\bar f$ is either of positive genus, or is a sphere adjacent to at least three nodes or punctures.
In other words, we do not put any restrictions on the components of $C$ mapped nontrivially by $f$ and put the same restrictions on components contracted to points as before.

As it was observed in \cite{KoMa}, all stable curves of a given degree in $\cp^n$ form a compact space $\bm_{g,k}(\cp^n,d)$.
In addition we have the universal curve 
\begin{equation}\label{uncurvecpn}
\overline F:\bu_{g,k}(\cp^n,d)\to\cp^n
\end{equation}
and the map
\begin{equation}\label{mappi}
\pi:\bu_{g,k}(\cp^n,d)\to \bm_{g,k}(\cp^n,d)
\end{equation} 
such that for any $\bar f\in \bm_{g,k}(\cp^n,d)$ the inverse image $\pi^{-1}(\bar f)$ is a nodal curve, and the restriction of $F$ to $\pi^{-1}(\bar f)$ coincides with $\bar f$. 
Furthermore, we have the continuous forgetting map 
\begin{equation}\label{uft}
\tilde{\operatorname{ft}}:\bu_{g,k}(\cp^n,d)\to\bu_{g,k}
\end{equation}
mapping a stable curve \eqref{barf} in the fiber of \eqref{mappi} to the corresponding stable curve in the fiber of \eqref{uncurve} obtained from the source nodal curve  $\overline C$ after contracting all its non-stable spherical components.
In particular, it induces the forgetting map 
\begin{equation}\label{ft}
{\operatorname{ft}}:\bm_{g,k}(\cp^n,d)\to\bm_{g,k}.
\end{equation}

We use a combination of the universal curves \eqref{uncurve} and \eqref{uncurvecpn} to define the {\em phase-tropical limit} of the scaled sequence \eqref{falpha}.
This technique is borrowed from \cite{KaMi}, where it is used also for definition of further tropical notions in the limiting tropical curve, in particular, differential forms.

By a straight holomorphic cylinder in $\ctor$ we mean the subset
\[
Z=\{ b(z_1^{a_1},\dots,z_n^{a_n}) \ | \ z\in\C^\times \}\subset\ctor,
\]
where $a=(a_1,\dots,a_n)\in\Z^n$ and $b\in\ctor$.
This set is the multiplicative translate $bT_a$ of the 1-dimensional torus subgroup
$T_a=\{ (z_1^{a_1},\dots,z_n^{a_n}) \ | \ z\in\C^\times \}$ of $\ctor$.

\begin{lem}\label{lem-punctures}
Let $\phi:A\to\ctor$ be a non-constant 
(parameterized) proper algebraic curve.
Then the number of punctures of $A$ is at least two.
Furthermore, if the number of punctures of $A$ is two then the image of $A$ is a straight holomorphic cylinder in $\ctor$.
\end{lem}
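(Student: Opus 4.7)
The strategy is to compactify $A$ to $\overline A$ (filling in the finitely many punctures) and use that each coordinate function $z_i \circ \phi \colon A \to \C^\times$ has no zeros or poles on $A$, so it extends to a meromorphic function on $\overline A$ whose divisor is supported on the set of punctures. Since the degree of a principal divisor on a compact Riemann surface vanishes, this constrains the situation sharply.

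For the lower bound on the number of punctures, if $A$ has none, then each $z_i \circ \phi$ is a pole-free meromorphic function on the compact surface $\overline A = A$, hence constant; this forces $\phi$ to be constant, contradicting the hypothesis. If $A$ has a single puncture $p$, then $\operatorname{div}(z_i \circ \phi) = k_i \cdot p$ has degree $k_i$, which must vanish, so once again every $z_i \circ \phi$ is constant and $\phi$ is constant. Hence $A$ must have at least two punctures.

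Now suppose $A$ has exactly two punctures $p_1, p_2$. By the same reasoning, the divisor of $z_i \circ \phi$ on $\overline A$ is of the form $a_i(p_1 - p_2)$ for some integer $a_i$, and non-constancy of $\phi$ forces $a := (a_1, \ldots, a_n) \in \Z^n \setminus \{0\}$. Write $a = g \cdot a'$ with $a' \in \Z^n$ primitive. For any $m \in \Z^n$ with $m \cdot a' = 0$, the character pulled back to $\overline A$,
\[
\chi_m \circ \phi \;=\; \prod_{i=1}^n (z_i \circ \phi)^{m_i},
\]
has divisor $(m \cdot a)(p_1 - p_2) = g(m \cdot a')(p_1 - p_2) = 0$, hence is a nowhere vanishing meromorphic function on the compact surface $\overline A$ and therefore equal to some constant $c_m \in \C^\times$. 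The common level sets of all such characters cut out a coset $b \cdot T_{a'} \subset \ctor$ of the primitive one-parameter subgroup $T_{a'}$, so $\phi(A) \subseteq b \cdot T_{a'}$.

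To finish, properness of $\phi$ implies $\phi(A)$ is closed in $\ctor$, and non-constancy implies $\phi(A)$ has complex dimension $1$; as a closed one-dimensional analytic subset of the connected one-dimensional complex manifold $b \cdot T_{a'} \cong \C^\times$, it must coincide with the whole $b \cdot T_{a'} = b \cdot T_a$, which by definition is a straight holomorphic cylinder. The only non-formal ingredient is identifying the enclosing coset via the character computation above; everything else is a direct consequence of the degree-zero property of principal divisors.
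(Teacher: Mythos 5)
Your proof is correct, and it takes a route that differs from the paper's in an instructive way, chiefly in the first claim. The paper argues on the image side: the projective closure $\overline{\phi(A)}\subset\cp^n$ must meet each of the $n+1$ coordinate hyperplanes, and since these hyperplanes have no common point the boundary $\overline{\phi(A)}\setminus\ctor$ contains at least two points, each of which corresponds to a puncture by properness of $\phi$. You instead argue on the source side: each pulled-back coordinate is a nonvanishing rational function on $\overline A$ whose principal divisor is supported on the punctures and has degree zero, so with at most one puncture all coordinates are constant. For the two-puncture case the two arguments are essentially dual to each other: your vector $a=(a_1,\dots,a_n)$ of vanishing orders at $p_1$ is exactly the paper's class $\delta\in H_1(\ctor)=\Z^n$ of the loop around a puncture, and your characters $\chi_m$ with $m\cdot a'=0$ are precisely the coordinates of the paper's projection $\pi_\delta$ with kernel $T_\delta$; in fact your divisor computation supplies the detail the paper leaves implicit, namely why $\pi_\delta\circ\phi$ is constant. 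Your write-up is also more careful at two points the paper glosses over: you pass to a primitive vector $a'$ so that the common level set of the orthogonal characters is genuinely a single coset $bT_{a'}$ (this uses that $(a')^{\perp}$ is a saturated sublattice, hence its common kernel is the connected subtorus $T_{a'}$ --- worth a word), and you show the image equals the whole cylinder rather than merely being contained in one, using properness (Remmert's proper mapping theorem, or simply that a proper nonconstant holomorphic map to a connected Riemann surface is open and closed, hence surjective). The one hypothesis you should cite explicitly when extending $z_i\circ\phi$ meromorphically across the punctures is algebraicity of $\phi$, which is what rules out essential singularities there.
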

\begin{proof}
The closure $\overline{\phi(A)}\subset\cp^n$ of $\phi(A)$ in $\cp^n\supset\ctor$ must intersect each of the $(n+1)$ coordinate hyperplanes in $\cp^n$. Thus $\overline{\phi(A)}\setminus\ctor$
contains at least two points. By the properness of $\phi$ each of these points must correspond to a puncture.

Suppose that $A$ has two punctures. Denote by $\delta\in H_1(\ctor)=\Z^n$ the class of the image of the simple positive loop going around one of the punctures. Then the similar loop for the other puncture gives is the class $-\delta$.
Let $\pi_\delta:\ctor\to(\C^\times)^{n-1}$ be a surjective homomorphism whose kernel is $T_\delta$.
Then the image of a loop around each puncture of $A$ under $\pi_\delta\circ\phi$ is homologically trivial. Thus $\pi_\delta\circ\phi(A)$ is constant.
\end{proof}

\begin{coro}\label{nosinglepoint}
Suppose that the family \eqref{falpha} converges to a stable curve $\bar f:\overline C\to\cp^n$ when $t_\alpha\to\infty$. Then each spherical component $K\subset\overline C$ must be adjacent at least to two distinct marked or nodal points.
\end{coro}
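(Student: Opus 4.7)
The approach is to split on whether $\bar f|_K$ is constant. In the constant case, $K$ is contracted by $\bar f$, so the stability of the stable map $\bar f:\overline C\to\cp^n$ directly guarantees that $K$ bears at least three special (marked or nodal) points, which is stronger than what the corollary claims.

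Suppose now that $\bar f|_K$ is non-constant. Since $K\cong\cp^1$ is compact and $\ctor$ is affine, the image cannot lie in $\ctor$, so $P:=(\bar f|_K)^{-1}(\cp^n\setminus\ctor)\subset K$ is a non-empty finite subset of $K$. If $\bar f(K)$ is not entirely in $\cp^n\setminus\ctor$, then $\bar f|_{K\setminus P}:K\setminus P\to\ctor$ is a non-constant proper parameterized algebraic curve in $\ctor$, and Lemma \ref{lem-punctures} yields $|P|\ge 2$. If instead $\bar f(K)$ lies wholly in the boundary, we apply the same Lemma \ref{lem-punctures} inside the open torus $(\C^\times)^m$ of the minimal toric stratum $\cp^m\subset\cp^n$ containing $\bar f(K)$; its statement and proof apply verbatim to $(\C^\times)^m$, still giving at least two points of $K$ sent to the boundary of that stratum, hence into $\cp^n\setminus\ctor$. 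In either case $|P|\ge 2$.

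The remaining and decisive step is to show that every $q\in P$ is in fact a special point of $K$. For each $\alpha$, the pullback divisor $\bar f_\alpha^{*}(\cp^n\setminus\ctor)$ on $\overline C_\alpha$ is supported precisely at the $k$ marked points, since $f_\alpha:C_\alpha\to\ctor$ is proper. Passing to the phase-tropical limit, encoded via the universal curves \eqref{uncurve} and \eqref{uncurvecpn} and the forgetting map \eqref{uft}, the marked points persist in $\overline C$, and any marked points that collide are absorbed into bubble components attached to pre-existing ones at nodes. Consequently the support of $\bar f^{*}(\cp^n\setminus\ctor)$ on $\overline C$ is contained in the set of marked points and nodes, so $P$ consists only of special points of $K$, and $|P|\ge 2$ completes the argument.

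The main obstacle is this last step, namely ruling out that any mass of $\bar f^{*}(\cp^n\setminus\ctor)$ accumulates at a non-special point of $\overline C$ under the limit. This is not a formal consequence of abstract stable-map convergence but relies on the explicit construction of the phase-tropical limit through \eqref{uncurve}, \eqref{uncurvecpn}, and \eqref{uft}, and it is what separates the present setting from an arbitrary Kontsevich--Manin limit.
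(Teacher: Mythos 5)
Your first two steps are fine and essentially parallel the paper: the constant case is disposed of by stability of $\bar f$ (giving three special points), and in the non-constant case you locate the stratum whose open torus meets $\bar f(K)$ and invoke Lemma \ref{lem-punctures} to produce at least two points of $K$ mapped into the boundary of that stratum. But the decisive third step --- that these boundary points are necessarily marked or nodal points of $\overline C$ --- is asserted rather than proved. The sentence ``marked points persist \dots any marked points that collide are absorbed into bubble components \dots Consequently the support of $\bar f^{*}(\cp^n\setminus\ctor)$ is contained in the set of marked points and nodes'' is a non sequitur: the danger is not that marked points misbehave in the limit, but that the limit map acquires \emph{new} intersection points with the boundary divisor at smooth, unmarked points of $\overline C$, points which are not limits of any marked points of $C_\alpha$. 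Nothing in your argument excludes this, and you concede as much in your final paragraph, where you flag exactly this step as ``the main obstacle'' and defer it to ``the explicit construction of the phase-tropical limit'' without carrying it out. As written, the proof is therefore incomplete at its crux.

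The paper closes precisely this gap with a short homological argument: if $q\in K$ were a smooth non-special point with $\bar f(q)$ in $\cp^l\setminus(\C^\times)^l$, the small loop around $q$ would map to a non-trivial class in $H_1((\C^\times)^l)$ (it winds around a coordinate hyperplane), yet for large $t_\alpha$ this loop is approximated by the boundary of a small holomorphic disk in $C_\alpha$ mapping entirely into $\ctor$, and such boundaries link the coordinate hyperplanes trivially --- a contradiction, so every such point is nodal or marked. (One could alternatively argue via uniform convergence of $\bar f_\alpha$ to $\bar f$ on compact sets away from nodes and marked points, together with a Rouch\'e/Hurwitz-type persistence of zeros of $\bar f_\alpha^{*}$ of the boundary divisor; but some argument of this kind must be supplied, and your proposal supplies none.)
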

\begin{proof}
The restriction $\bar f|_K$ is non-constant by the stability of $\bar f$.
Thus $\bar f(K)\cap(\C^\times)^l\neq\emptyset$ for a coordinate subspace (intersection of $n-l$ coordinate hyperplanes) $\cp^l\subset\cp^n$, $1\le l\le n$.
By Lemma \ref{lem-punctures}, ${\bar f}^{-1}(\cp^l\setminus (\C^\times)^l)$ consists of at least two points. The loops going around these points are non-trivial in $H_1((\C^\times)^l)$, and thus they cannot come as limits of boundaries of non-trivial holomorphic disks in $\ctor$.  
\end{proof}

\begin{coro}\label{prescomp}
Let $\widehat C$ be the stable curve obtained as the image by \eqref{ft} of a stable curve $\bar f:\overline C\to\cp^n$ obtained as the limit of the family \eqref{falpha} when $t_\alpha\to\infty$.
If $K\subset\overline C$ is a component which is either non-spherical or is adjacent to at least three nodal or marked points then the map \eqref{uft} maps $K$ isomorphically to a component of $\widehat C$. In other words, the map \eqref{uft} does not contract $K$ to a point.
\end{coro}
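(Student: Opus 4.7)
The plan is to interpret the restriction of $\tilde{\ft}$ to the fiber $\pi^{-1}(\bar f)=\overline C$ as the canonical stabilization morphism that iteratively contracts every irreducible component of $\overline C$ which is rational and carries at most two special points (nodes plus marked points); this is precisely what it means to ``contract all non-stable spherical components'' after forgetting $\bar f$. Showing that $K$ maps isomorphically to a component of $\widehat C$ amounts to showing that $K$ is never eligible for contraction at any stage of this iterative process, so I would split on the two hypotheses and proceed by induction on the number of contractions.

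For the first case, where $K$ is non-spherical, there is nothing to verify: only rational components are ever contracted in stabilization, so positive-genus components like $K$ survive untouched, and $\tilde{\ft}|_K$ is an isomorphism onto its image. For the second case, where $K\cong\cp^1$ carries at least three special points, the goal is to show that this count is preserved throughout the stabilization. The local step is the key calculation. By Corollary \ref{nosinglepoint}, any adjacent component $K'$ eligible for contraction is a $\cp^1$ with \emph{exactly} two special points (at most two by unstability, and at least two by \ref{nosinglepoint}). Since $\overline C$ is connected and contains the distinct components $K$ and $K'$, one of these two special points is the node $K\cap K'$, and the other is either a marked point on $K'$ or a node $K'\cap K''$ with some other component $K''$ (possibly $K$ itself). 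Contracting $K'$ replaces the node $K\cap K'$ on $K$ by the image of $K'$'s other special point, either a marked point or a node with $K''$; in every variant the special-point count on $K$ is preserved, with the understanding that a self-node (the case $K''=K$) is counted with its customary multiplicity two from the normalization.

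Iterating this local move through the finitely many contractions needed to stabilize $\overline C$, the special-point count on $K$ stays at its initial value $\ge 3$ throughout, so $K$ remains stable at every intermediate stage and is never itself contracted. The main obstacle I anticipate is the combinatorial bookkeeping in configurations where several rational bridges are collapsed in a chain, or where $K$ is adjacent to a single contracted component through two distinct nodes; both are handled by reducing to repeated applications of the single local move above and by adopting the standard self-node convention, so the global conclusion follows cleanly once the local step is established.
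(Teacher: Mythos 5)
Your proposal is correct and follows essentially the same route as the paper: Corollary \ref{nosinglepoint} forces every contracted spherical component to carry exactly two special points, and contracting such components never decreases the number of nodal or marked points on the remaining components, so $K$ is never contracted. Your version merely makes the iteration and the local bookkeeping (chains, the self-node case $K''=K$) explicit, which the paper leaves implicit.
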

\begin{proof}
By Corollary \ref{nosinglepoint} the only components contracted by $\tilde{\operatorname{ft}}|_{\overline C}$ are spherical components with two nodal or marked points. Their contraction does not reduce the number of nodal or marked points adjacent to other components of ${\overline C}$ .
\end{proof}

Suppose that the family $\overline C_\alpha$ converges to $\widehat C\in\bm_{g,k}$ when $t_\alpha\to\infty$.
Let $z_\alpha\in C_\alpha$ be a family of points converging to $z=\lim\limits_{t_\alpha\to\infty}z_\alpha\in \widehat C\subset\bu_{g,k}$.
Let 
\begin{equation}\label{tau}
\tau_\alpha=|f_\alpha(z_\alpha)|^{-1}\in\R^n_{>0}\subset\ctor
\end{equation}
be obtained from $f_\alpha(z_\alpha)$ by taking the absolute inverse value coordinate-wise. 
Then we have $\tau_\alpha f_\alpha(C_\alpha)\cap\Log^{-1}_{t_\alpha}(0)\neq\emptyset$ for the multiplicative translate
$\tau_\alpha f_\alpha(C_\alpha)$ of $f_\alpha(K_\alpha)$ in $\ctor$ by $\tau_\alpha$.
In particular, if the family 
\begin{equation}\label{taufalpha}
\tau_\alpha f_\alpha:C_\alpha\to\ctor
\end{equation}
closes up to a converging family 
\begin{equation}\label{btaufalpha}
\overline{\tau_\alpha f_\alpha}:\overline C_\alpha\to\cp^n
\end{equation}
in $\bm_{g,k}(\cp^n,d)$ then the limit 
\begin{equation}\label{bftau}
\bar f_\tau:\overline C_\tau\to\cp^n
\end{equation}
of the family \eqref{btaufalpha} when $t_\alpha\to\infty$ has a component $K\subset \overline C_\tau$
containing an accumulation point $z_\tau$ of $z_\alpha\in C_\alpha$ inside $\bu_{g,k}(\cp^n,d)$. 
The point $z_\tau$ is mapped to $z$ under the map $\overline C_\tau\to\widehat C$ induced by the map \eqref{uft}.
By Corollary \ref{prescomp}, if $z\in\widehat C$ is not a nodal or marked point then $\tilde{\operatorname{ft}}|_K$ is an isomorphism to its image.
Thus $z_\tau$ is the unique accumulation point, i.e. the limit of the points $z_\alpha$ in $\bu_{g,k}(\cp^n,d)$.
We get the following proposition.
\begin{prop}
Suppose that \eqref{falpha} is a scaled family such that $\overline C_\alpha$ converges to $\widehat C\in\bm_{g,k}$, and $z_\alpha\in C_\alpha$ is a family of points in \eqref{falpha} converging to a point $z\in\widehat C$ in $\bu_{g,k}$, and such that \eqref{btaufalpha} converges to \eqref{bftau}, where $\tau_\alpha\in\R^n_{>0}$ are defined by \eqref{tau}.
If $z\in\widehat C$ is neither nodal nor marked then 
the lift of $z_\alpha$ to $\bu_{g,k}(\cp^n,d)$ under \eqref{uft} with the help of \eqref{btaufalpha} also converges to a point of $\overline C_\tau$.
\end{prop}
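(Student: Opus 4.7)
The idea is to combine compactness of the universal stable-map curve with the uniqueness of preimages under $\tilde{\ft}$ provided by Corollary \ref{prescomp}.

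First, for each $\alpha$ let $\tilde z_\alpha\in\bu_{g,k}(\cp^n,d)$ denote the lift of $z_\alpha$, i.e.\ the point sitting over $\overline{\tau_\alpha f_\alpha}\in\bm_{g,k}(\cp^n,d)$ via \eqref{mappi} and corresponding to $z_\alpha\in\overline C_\alpha$. Since $\bu_{g,k}(\cp^n,d)$ is compact (a universal curve over a compact Kontsevich space), some subsequence $\tilde z_\beta$ converges to an accumulation point $\tilde z$. The continuity of \eqref{mappi} together with the assumption that \eqref{btaufalpha} converges to \eqref{bftau} forces $\tilde z$ to lie in the fiber $\overline C_\tau$ of \eqref{mappi} over $\bar f_\tau$.

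Second, I would use \eqref{uft}. By continuity of $\tilde{\ft}$, and because $\tilde{\ft}(\tilde z_\beta)=z_\beta\to z$ in $\bu_{g,k}$, we get $\tilde{\ft}(\tilde z)=z\in\widehat C$. The restriction of $\tilde{\ft}$ to $\overline C_\tau$ is the contraction of exactly those spherical components adjacent to only two nodes or marked points, and the image of such a contracted component is a nodal or marked point of $\widehat C$. Because $z$ is neither nodal nor marked by hypothesis, $\tilde z$ cannot lie on any contracted component; hence it lies on some non-contracted component $K\subset\overline C_\tau$.

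Third, Corollary \ref{prescomp} applies to $K$: the map $\tilde{\ft}|_K$ is an isomorphism onto a component $\widehat K$ of $\widehat C$. The choice of $K$ is dictated by $z$, since the non-contracted components of $\overline C_\tau$ are in bijection with the components of $\widehat C$ via $\tilde{\ft}$, and $\widehat K$ must be the component of $\widehat C$ containing $z$. Thus $\tilde z$ is forced to equal the unique preimage $(\tilde{\ft}|_K)^{-1}(z)$. This uniqueness applies to \emph{any} accumulation point of the sequence $\tilde z_\alpha$ in $\bu_{g,k}(\cp^n,d)$, so the full sequence converges to $\tilde z\in\overline C_\tau$, which is what we wanted.

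The main obstacle is showing that $\tilde z$ cannot slip into a spherical component contracted by $\tilde{\ft}$; this is precisely the content of Corollary \ref{prescomp}, itself a consequence of Lemma \ref{lem-punctures} and Corollary \ref{nosinglepoint}. Once that is in place, everything else is formal continuity and compactness, and the existence of a well-defined limit of the lifts follows from having a unique accumulation point.
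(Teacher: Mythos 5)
Your argument is correct and follows essentially the same route as the paper: compactness of $\bu_{g,k}(\cp^n,d)$ to produce accumulation points lying in $\overline C_\tau$, continuity of $\tilde{\ft}$ to force their image to be $z$, and Corollary \ref{prescomp} to conclude that the relevant component is mapped isomorphically, so the accumulation point is unique and the lifts converge. The only difference is that you spell out more explicitly why the accumulation point cannot sit on a contracted component (its $\tilde{\ft}$-image would be nodal or marked), a step the paper leaves implicit.
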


\begin{prop}\label{Kzw}
Under the hypotheses of the previous proposition, suppose in addition that $w_\alpha\in C_\alpha$ is another family of points converging to a point $w\in\widehat C$ in $\bu_{g,k}$.
If $z$ and $w$ are non-nodal, non-marked, and belong to the same component $K\subset\widehat C$ then $\overline{\sigma_\alpha f_\alpha}:C_\alpha\to\tor$, $\sigma_\alpha=|f_\alpha(w_\alpha)|^{-1}$, also converges to a stable curve $\bar f_\sigma:\overline C_\sigma\to\cp^n$.

Furthermore, the restrictions $\bar f_\sigma|_{K^\circ_\sigma}$ and $\bar f_\tau|_{K_\tau^\circ}$ both take value in $\ctor$ and one map is obtained from the other by multiplication by $\lim\limits_{t_\alpha\to\infty}\sigma_\alpha/\tau_\alpha$ (which exists).
Here $K_\sigma\subset\overline C_\sigma$ and $K_\tau\subset\overline C_\tau$
are the components corresponding to $K$ under \eqref{uft} and the superscript $K^\circ$ for a component $K$ signifies the complement of the set of nodal and marked points.
\end{prop}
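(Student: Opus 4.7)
My plan is to first extract a convergent subsequence for $\overline{\sigma_\alpha f_\alpha}$ using compactness of $\bm_{g,k}(\cp^n,d)$, identify the components of the limits $\overline C_\tau$ and $\overline C_\sigma$ corresponding to $K$ via the forgetting map, and then read off the ratio $\sigma_\alpha/\tau_\alpha$ by evaluating $\tau_\alpha f_\alpha$ at $w_\alpha$.

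First, I apply compactness of $\bm_{g,k}(\cp^n,d)$ to pass to a subsequence along which $\overline{\sigma_\alpha f_\alpha}$ converges to a stable map $\bar f_\sigma:\overline C_\sigma\to\cp^n$ and $w_\alpha$ converges to a point $w_\sigma\in\overline C_\sigma$; by construction $\bar f_\sigma(w_\sigma)\in(S^1)^n\subset\ctor$. The same argument used in the previous proposition, applied now to $w$ in place of $z$ but within the $\tau$-normalisation, shows that $w_\alpha$ also has a unique limit $w_\tau\in\overline C_\tau$. By Corollary \ref{prescomp}, each of $\overline C_\tau$ and $\overline C_\sigma$ contains a unique component ($K_\tau$ resp. $K_\sigma$) mapped isomorphically onto $K\subset\widehat C$ by $\tilde{\operatorname{ft}}$, with nodal and marked structure preserved; hence $K_\tau^\circ$ and $K_\sigma^\circ$ are canonically identified with $K^\circ$, and in particular $z_\tau,w_\tau\in K_\tau^\circ$.

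The main step is to show $\bar f_\tau(K_\tau^\circ)\subset\ctor$ (and likewise for $\sigma$). Since $z_\tau\in K_\tau^\circ$ and $\bar f_\tau(z_\tau)\in\ctor$, the image $\bar f_\tau(K_\tau)$ is not contained in any coordinate hyperplane, so in any affine chart of $\cp^n$ each coordinate ratio of $\bar f_\tau$ is a non-constant meromorphic function on $K_\tau$. Suppose for contradiction that some $p\in K_\tau^\circ$ satisfies $\bar f_\tau(p)\in\cp^n\setminus\ctor$, forcing some such coordinate ratio to vanish at $p$. Choose a disk neighborhood of $p$ in $K_\tau$, identify it via the degeneration with disks in $\overline C_\alpha$, and apply Hurwitz's theorem to conclude that the corresponding coordinate ratio of $\overline{\tau_\alpha f_\alpha}$ vanishes at some $p_\alpha\to p$ for all large $\alpha$. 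But $\tau_\alpha f_\alpha(C_\alpha)\subset\ctor$ forces such vanishing to occur only at the $k$ marked points of $\overline C_\alpha$, which in the limit converge in $\overline C_\tau$ either to marked points or coalesce into nodes, contradicting $p\in K_\tau^\circ$.

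Finally, since $w_\tau\in K_\tau^\circ$ we have $\bar f_\tau(w_\tau)\in\ctor$, and the coordinate-wise identity $|\tau_\alpha f_\alpha(w_\alpha)|=\tau_\alpha/\sigma_\alpha$ implies $\tau_\alpha/\sigma_\alpha\to|\bar f_\tau(w_\tau)|\in\R^n_{>0}$. Hence $\lambda=\lim\sigma_\alpha/\tau_\alpha=1/|\bar f_\tau(w_\tau)|\in\R^n_{>0}$ exists. For any $p\in K^\circ$ with an approximating family $p_\alpha\in C_\alpha$ (whose lifts $p_\tau\in K_\tau^\circ$ and $p_\sigma\in K_\sigma^\circ$ exist by the same argument), the identity $\sigma_\alpha f_\alpha(p_\alpha)=(\sigma_\alpha/\tau_\alpha)\cdot\tau_\alpha f_\alpha(p_\alpha)$ passes to the limit as $\bar f_\sigma(p_\sigma)=\lambda\cdot\bar f_\tau(p_\tau)$, proving the last claim. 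The main obstacle is the Hurwitz-type step of the third paragraph: one must control how convergence in $\bm_{g,k}(\cp^n,d)$ interacts with the toric boundary of $\ctor\subset\cp^n$ so that all preimages of that boundary at the limit are forced into the nodal-marked locus.
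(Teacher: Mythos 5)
Your proof follows the same core mechanism as the paper's (very terse) argument: identify $K_\tau\approx K\approx K_\sigma$ via Corollary \ref{prescomp}, evaluate the $\tau$-normalized limit at the point $w_\tau\in K_\tau^\circ$ corresponding to $w$, deduce that $\sigma_\alpha/\tau_\alpha=|\tau_\alpha f_\alpha(w_\alpha)|^{-1}$ converges to $\lambda=|\bar f_\tau(w_\tau)|^{-1}\in\R^n_{>0}$, and use the identity $\sigma_\alpha f_\alpha=(\sigma_\alpha/\tau_\alpha)\,\tau_\alpha f_\alpha$ to relate the two normalizations (your formula for $\lambda$ is in fact the precise version of the paper's statement that the limit ``coincides with $\bar f_\tau(w_\tau)$''). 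What you add beyond the paper is the Hurwitz-type argument showing $\bar f_\tau(K_\tau^\circ)\subset\ctor$, a fact the paper uses implicitly without comment: your reasoning (an isolated zero of a coordinate of the limit at a non-nodal, non-marked point would force nearby zeros for $\overline{\tau_\alpha f_\alpha}$, impossible since $\tau_\alpha f_\alpha(C_\alpha)\subset\ctor$ and the only boundary points of $\overline C_\alpha$ are the marked points, which converge to marked or nodal points of $\overline C_\tau$) is sound; the only quibble is that the coordinate functions need not be non-constant on $K_\tau$ --- they are merely not identically zero there, which is all the argument requires.

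The one place you fall short of the statement is convergence of the \emph{whole} family $\overline{\sigma_\alpha f_\alpha}$: you pass to a subsequence at the outset via compactness of $\bm_{g,k}(\cp^n,d)$ and never remove this restriction, so as written you only produce a subsequential limit $\bar f_\sigma$. The repair is immediate with ingredients you already have, and it is exactly the paper's route: the convergence $\sigma_\alpha/\tau_\alpha\to\lambda$ uses only the $\tau$-limit and the convergence of $w_\alpha$ in $\bu_{g,k}(\cp^n,d)$, hence holds along the full family; since multiplication by elements of $\R^n_{>0}\subset\ctor$ extends to automorphisms of $\cp^n$ acting continuously on $\bm_{g,k}(\cp^n,d)$ (jointly in the multiplier), the identity $\sigma_\alpha f_\alpha=(\sigma_\alpha/\tau_\alpha)\,\tau_\alpha f_\alpha$ yields convergence of the entire family to $\lambda\cdot\bar f_\tau$, and this simultaneously identifies $\bar f_\sigma$ with $\lambda\cdot\bar f_\tau$ on all components at once, rather than only pointwise on $K^\circ$ as in your last step.
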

\begin{proof}
The limit $\lim\limits_{t_\alpha\to\infty}\sigma_\alpha/\tau_\alpha$ coincides with $\bar f_\tau(w_\tau)$, where $w_\tau\in K_\tau$ is the point corresponding to $w$ under the isomorphism $K_\tau\approx K$. Multiplication by $\sigma_\alpha/\tau_\alpha\in\R^n_{>0}$ identifies $\tau_\alpha f_\alpha$ into $\sigma_\alpha f_\alpha$.
\end{proof}

\begin{defn}\label{pht-lim}
We say that a scaled family \eqref{falpha} of curves of degree $d$ in $\cp^n\supset\ctor$ converges {\em phase-tropically} if the following conditions hold:
\begin{itemize}
\item
The family $\overline C_\alpha$ converges to a curve $\widehat C\in \bm_{g,k}$, $t_\alpha\to\infty$.
\item 
For each component $K\subset\widehat C$ there exists a point $z\in K^\circ$ and a family $z_\alpha\in C_\alpha$ converging to $z\in\widehat C$ in $\bu_{g,k}$ such that \eqref{btaufalpha} converges to a map 
in $\bm_{g,k}(\cp^n,d)$
 and such that
 \begin{equation}\label{hK}
h_K=\limtrop f_{\alpha}(z_\alpha)\in [-\infty,\infty]^n 
\end{equation}
exists.
Here $K^\circ$ is the complement of the nodal and marked points in $K$.
\end{itemize}

If $h_K\in\R^n$ then the component $K$ is called {\em tropically finite}, otherwise infinite.
The source $\overline C_\tau$ of the limiting map \eqref{bftau} of \eqref{btaufalpha} has a component corresponding to $K$ by Corollary  \ref{prescomp}.
We refer to the map
\begin{equation}\label{Kphase}
\phi_K:K^\circ\to\ctor
\end{equation}
defined by $\phi_K=\bar f_\tau|_{K^\circ}$
as well as its image 
\begin{equation}\label{Kphi}
\Phi(K)=\phi_K(K^\circ)\subset\ctor
\end{equation}
as the {\em phase} of the component $K\subset\widehat C$.
Phases are defined up to multiplication by an element of $\R_{>0}^n$ in $\ctor$.
\end{defn}

The following proposition shows independence of the phases from the choice of $z_\alpha$ and thus justifies the notations free from $z_\alpha$ or $\tau$.

\begin{prop}\label{indepK}
In Definition \ref{pht-lim} 
neither the tropical limit \eqref{hK} nor the phase \eqref{Kphase} of a component $K\subset\widehat C$ depends on the choice of a family $z_\alpha\in C_\alpha$ converging to a point $z\in K^\circ\subset\widehat C$. 
\end{prop}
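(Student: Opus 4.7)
The plan is to reduce the entire statement to Proposition \ref{Kzw}. Let $(z_\alpha, z)$ and $(w_\alpha, w)$ be two valid choices in the sense of Definition \ref{pht-lim}, with $z, w \in K^\circ$. Set $\tau_\alpha = |f_\alpha(z_\alpha)|^{-1}$ and $\sigma_\alpha = |f_\alpha(w_\alpha)|^{-1}$, and let $\bar f_\tau \colon \overline C_\tau \to \cp^n$ and $\bar f_\sigma \colon \overline C_\sigma \to \cp^n$ be the associated limiting stable maps. By Proposition \ref{Kzw} the ratio $\sigma_\alpha/\tau_\alpha$ converges to a vector $\lambda$ which lies in $\R^n_{>0}$ (it is coordinate-wise positive and real) and satisfies
\[
\bar f_\sigma|_{K_\sigma^\circ} = \lambda \cdot \bar f_\tau|_{K_\tau^\circ},
\]
where $K_\tau \subset \overline C_\tau$ and $K_\sigma \subset \overline C_\sigma$ are the components corresponding to $K$.

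The first consequence is independence of the phase. By \eqref{Kphi} the phase $\Phi(K)$ is defined only modulo multiplication by $\R^n_{>0}$ in $\ctor$, and the displayed identity exhibits the two representatives as $\lambda$-translates of one another. Hence both choices yield the same $\Phi(K)$.

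The second step is the comparison of tropical limits. Coordinatewise we have
\[
\log_{t_\alpha}|f_\alpha(w_\alpha)| - \log_{t_\alpha}|f_\alpha(z_\alpha)| = \log_{t_\alpha}(\tau_\alpha/\sigma_\alpha),
\]
and since $\tau_\alpha/\sigma_\alpha \to \lambda^{-1} \in \R^n_{>0}$ is finite and positive, the right-hand side tends to $0 \in \R^n$. Passing to $\limtrop$ therefore gives $h_K^z = h_K^w$ in $[-\infty, \infty]^n$; in any coordinate where one side is $\pm\infty$, the other is the same $\pm\infty$, since a bounded additive perturbation does not affect divergence.

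The whole argument rests on the convergence of $\sigma_\alpha/\tau_\alpha$ to a finite positive vector, which is precisely the content of Proposition \ref{Kzw}. This single fact simultaneously neutralizes the $\R^n_{>0}$-ambiguity in the phase and any potential shift in the tropical limit, so I do not anticipate any real obstacle beyond unpacking the definitions.
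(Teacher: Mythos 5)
Your proposal is correct and follows essentially the same route as the paper: both reduce everything to Proposition \ref{Kzw}, using the existence of the finite positive limit of $\sigma_\alpha/\tau_\alpha$ to absorb the phase ambiguity into the $\R^n_{>0}$-action and to note that the corresponding additive shift vanishes after rescaling by $1/\log t_\alpha$, so the tropical limits agree. Your write-up merely spells out the coordinatewise bookkeeping (including the $\pm\infty$ coordinates) that the paper leaves implicit.
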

\begin{proof}
For two choices $z_\alpha\to z$ and $w_\alpha\to w$ with $z,w\in K^\circ$ the limit of $|z_\alpha|/|w_\alpha|$ exists by Proposition \ref{Kzw}. Thus the tropical limits (involving rescaling by $1/\log t_\alpha$) of $z_\alpha$ and $w_\alpha$ must coincide.
The phase of $K$ does not depend on the choice of $z_\alpha$ by Proposition \ref{Kzw}.
\end{proof}

A nodal or marked point $p\in K\subset\widehat C$ corresponds to a puncture of $K^\circ$.
Define $\gamma_p\subset K^\circ$ to be a simple loop going around $p$ in the negative direction with respect to $p$ (and thus in the positive direction with respect to $K\setminus\{p\}$), and set
\[
\delta_K(p)=[\phi_K(\gamma_p)]\in H_1(\ctor)=\Z^n.
\]
Compactifying $\phi_K:K^\circ\to\ctor$ to $\bar\phi_K:K\to\cp^n$ and expanding $\bar\phi$ at $p$ to a series in the corresponding affine coordinates, we get the following proposition.
\begin{prop}[cf. \cite{Mi-am}]
If $\delta_K(p)\neq 0$ then the limit
\[
\Phi(K,p)=\lim\limits_{s\to+\infty}s^{\delta_K(p)}\Phi(K),
\]
$s\in\R_{>0}$, is a straight holomorphic cylinder given by $\{bz^{\delta_K(p)} \ | \ z\in\C^\times\}$ for some $b\in\ctor$. The notations $s^{\delta_K(p)}\in\R_{>0}$ and $z^{\delta_K(p)}\in\ctor$ refer to taking power coordinatewise by $\delta_K(p)$.
If $\delta_K(p)=0$ then $\Phi(K,p)=\Phi(K)$. 
\end{prop}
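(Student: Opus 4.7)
The strategy is to reduce the question to a local holomorphic analysis of $\bar\phi_K$ at the puncture $p$, where the map is controlled by a monomial, and to match this monomial against the asserted asymptotic cylinder.

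Let $w$ be a holomorphic coordinate on $\overline K$ with $w(p)=0$. Since $\bar\phi_K:\overline K\to\cp^n$ is holomorphic, each coordinate $\bar\phi_K^j$ is meromorphic in $w$ and admits an expansion
\[
\bar\phi_K^j(w)=a_jw^{m_j}\bigl(1+O(w)\bigr),\qquad a_j\in\C^\times,\ m_j\in\Z.
\]
The vector $(m_1,\ldots,m_n)$ records the winding numbers of $\bar\phi_K$ along a small loop around $p$, hence equals $\pm\delta_K(p)$. Since the cylinder $\{bz^{\delta_K(p)}\mid z\in\C^\times\}$ coincides as a subset of $\ctor$ with $\{bz^{-\delta_K(p)}\}$ (reparameterize by $z\mapsto z^{-1}$), it is harmless to assume $m_j=\delta_j$, where I write $\delta_K(p)=(\delta_1,\ldots,\delta_n)$. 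The case $\delta_K(p)=0$ is then immediate: all $m_j=0$, so $\bar\phi_K$ extends holomorphically and nonvanishing across $p$, and since the rescaling $s^0$ is the identity on $\ctor$, $\Phi(K,p)=\Phi(K)$.

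Assume now $\delta:=\delta_K(p)\neq 0$. For each fixed $z\in\C^\times$, substitute $w_s=z/s$; then $w_s\to 0$ and
\[
s^{\delta_j}\bar\phi_K^j(w_s)=s^{\delta_j}a_j(z/s)^{\delta_j}\bigl(1+O(1/s)\bigr)=a_jz^{\delta_j}\bigl(1+O(1/s)\bigr)\longrightarrow a_jz^{\delta_j}
\]
as $s\to+\infty$, uniformly on compact subsets of $\C^\times$ by the holomorphy of $\bar\phi_K$ near $p$. As $z$ varies, the limit points sweep out the entire cylinder $\{az^\delta\mid z\in\C^\times\}$, which shows one containment in the Hausdorff limit and identifies $b=a$.

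The main obstacle is the reverse containment: ruling out any other accumulation point of $s^\delta\Phi(K)$ inside a fixed compact $L\subset\ctor$. For $w$ staying in a compact subset of $K^\circ$ disjoint from the punctures, $\bar\phi_K(w)$ lies in a compact subset of $\ctor$, so the $j$-th coordinate of $s^\delta\bar\phi_K(w)$ has modulus of order $s^{\delta_j}$ and leaves $L$ as $s\to\infty$. At any other puncture $q$, the analogous local Laurent expansion at $q$ shows that a family $s^\delta\bar\phi_K(v_s)$ with $v_s\to 0$ can converge inside $\ctor$ only when $\delta_K(q)$ is positively proportional to $\delta$, in which case the limit again lies on a cylinder multiplicatively parallel to $\{az^\delta\}$ and therefore coincides with it up to the $\R_{>0}$-translation along its axis that is already built into the definition of the phase. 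Assembling these estimates yields the Hausdorff convergence of $s^\delta\Phi(K)$ on compacts of $\ctor$ to the asserted cylinder.
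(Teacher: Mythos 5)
Your core computation follows the same route the paper itself indicates (the proposition is derived there from nothing more than compactifying $\phi_K$ to $\bar\phi_K$ and expanding at $p$ in affine coordinates), and the substitution $w_s=z/s$ giving $s^{\delta_j}a_j(z/s)^{\delta_j}\to a_jz^{\delta_j}$ is exactly the intended mechanism. However, two steps are genuinely flawed. First, the sign handling: the relation between the vanishing orders $(m_1,\dots,m_n)$ and $\delta_K(p)$ is not ``up to a harmless sign'' --- it is pinned down by the paper's convention that $\gamma_p$ is the \emph{negatively} oriented loop around $p$, which gives $(m_j)=-\delta_K(p)$. Your appeal to the symmetry $\{bz^{\delta}\}=\{bz^{-\delta}\}$ only concerns the limiting cylinder as a set; it says nothing about the rescaling factor, and $s^{\delta}$ and $s^{-\delta}$ translate the amoeba of $\Phi(K)$ in opposite directions, producing different limits (for $\phi_K(w)=(w,1+w)$ and the puncture at $w=0$ one of the two choices yields the cylinder $\{z_2=1\}$ while the other yields an empty limit on compacts). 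So ``it is harmless to assume $m_j=\delta_j$'' is precisely the point that needs checking against the orientation convention, and that check is the actual content of the verification; as written the step is unjustified.

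Second, the closing paragraph about the reverse containment does not hold as stated. If another puncture $q$ of $K^\circ$ has degree vector proportional to $\delta_K(p)$, it does contribute accumulation points, and these lie on a cylinder that is \emph{parallel to but in general different from} the one at $p$: for $\phi_K(w)=\bigl((w-1)(w-2),\,w\bigr)$ both finite punctures have direction proportional to $(-1,0)$, and the rescaled curves accumulate on the union $\{z_2=1\}\cup\{z_2=2\}$, which is not a single straight cylinder. The $\R_{>0}^n$-ambiguity built into the phase is one global multiplicative translate of the whole picture, not a separate translate for each piece, so it cannot be invoked to identify the two parallel cylinders. In other words, the global Hausdorff limit you set out to establish can fail to be a single cylinder; the statement is to be read locally at $p$ (this is how it is used in Proposition \ref{nody}, where the limit is taken over $f_\alpha(U\cap C_\alpha)$ for a small neighborhood $U\ni p$), and the local statement is exactly what your Laurent-expansion computation gives once the sign issue above is settled. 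Restricting your argument to a punctured neighborhood of $p$, where your escape-to-infinity estimate for points away from $p$ is correct, repairs the proof and matches the paper's derivation.
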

Recall that $\Phi(K,p)$ as well as $\Phi(K)$ is defined up to a multiplicative translation.
The notations $s^{\delta_K(p)}\in\R_{>0}$ and $z^{\delta_K(p)}\in\ctor$ in the proposition above refer to taking power coordinatewise by $\delta_K(p)$.

\begin{prop}\label{nody}
Suppose that  \eqref{falpha} converges phase-tropically and $p_\alpha\in C_\alpha$ is a family of points converging to a point $p\in K\subset\widehat C$ which is either nodal or marked.
Then for a sufficiently small neighborhood $U\subset\bu_{g,k}$ of $p\in\widehat C\subset\bu_{g,k}$  the limit
\[
\Phi(p)=\lim\limits_{t_\alpha\to\infty} |f_\alpha(p_\alpha)|^{-1}f(U\cap C_\alpha)\subset\ctor
\]
(with respect to the Hausdorff metric on neighborhoods of compacts) exists
and coincides with $\Phi(K,p)$ if $\delta_K(p)\neq 0$.

Furthermore, if $K$ is tropically finite then any accumulation point of $\Log_{t_\alpha}(f_\alpha(p_\alpha))\in\R^n$ is contained in the ray $R_{K,p}\subset\R^n$ emanating from $h_K\in\R^n$ in the direction of $\delta_K(p)$. 
\end{prop}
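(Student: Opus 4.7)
The plan is to reduce the claim to a local asymptotic analysis of $f_\alpha$ near the special point $p$ and then to rescale so that $p_\alpha$ lands on the unit torus $\sonen\subset\ctor$. To set up the local picture, I would fix local coordinates on the universal curve $\bu_{g,k}$ adapted to $p$: a disk $D_\alpha\subset C_\alpha$ around a marked section $q_\alpha\to p$ with $q_\alpha$ at $z=0$ in the marked case, or a family of annuli $\{xy=\epsilon_\alpha\}$ with $\epsilon_\alpha\to 0$ in the nodal case. In the marked case the $j$-th coordinate of $f_\alpha$ is meromorphic at $z=0$, and the winding of $\phi_K$'s $j$-th coordinate along a small loop around $p$ (topological, hence stable for large $\alpha$) pins down its order at the origin as $b_j=-(\delta_K(p))_j$, giving
\[
f_{\alpha,j}(z)=c_{\alpha,j}\,z^{b_j}\bigl(1+g_{\alpha,j}(z)\bigr),
\]
with $c_{\alpha,j}\in\C^\times$ and $g_{\alpha,j}$ holomorphic on the disk and vanishing at $z=0$. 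The phase-tropical convergence $\tau_\alpha f_\alpha\to\phi_K$ on $K^\circ$ forces $\tau_{\alpha,j}c_{\alpha,j}\to c_j$ and constrains $g_{\alpha,j}$ on small circles $|z|=r_0$ to be of size $O(r_0)$; applying the maximum principle to $g_{\alpha,j}$ on the enclosed disk yields the uniform bound $|g_{\alpha,j}(z)|=O(r_0)+o_\alpha(1)$ for $|z|\le r_0$.

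With this local form, let $w_\alpha$ be the local coordinate of $p_\alpha$, so $w_\alpha\to 0$. The change of variable $z=\zeta w_\alpha$ gives
\[
|f_\alpha(p_\alpha)|^{-1}f_\alpha(\zeta w_\alpha)=u_\alpha\,\zeta^{\vec b}\,\bigl(1+o(1)\bigr),\qquad u_\alpha=c_\alpha/|c_\alpha|\in\sonen,
\]
uniformly for $\zeta$ in compacts of $\C^\times$, because $g_{\alpha,j}(\zeta w_\alpha)\to 0$ by the previous bound. The admissible range of $\zeta$ exhausts $\C^\times$ as $\alpha\to\infty$; after extracting a subsequence with $u_\alpha\to\vec u\in\sonen$ (using compactness of the unit torus), the image $|f_\alpha(p_\alpha)|^{-1}f_\alpha(U\cap C_\alpha)$ Hausdorff-converges on compacts of $\ctor$ to the straight holomorphic cylinder $\{\vec u\,\zeta^{\vec b}:\zeta\in\C^\times\}$. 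Reparametrizing $\zeta\mapsto\zeta^{-1}$ identifies this with $\{\vec u\,\zeta^{\delta_K(p)}:\zeta\in\C^\times\}=\Phi(K,p)$, up to the multiplicative-translation ambiguity $\vec u\in\sonen$ that matches exactly the ambiguity built into the definition of $\Phi(K,p)$.

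For the ray statement, suppose $h_K\in\R^n$. Fix a reference point $z_0\in D\setminus\{0\}$; then $z_0\in K^\circ$ and $\log_{t_\alpha}|z_0|\to 0$, so Proposition \ref{indepK} gives $\log_{t_\alpha}|f_\alpha(z_0)|\to h_K$, which via the local form translates into $\log_{t_\alpha}|c_\alpha|\to h_K$. Applying the same expansion at $p_\alpha$,
\[
\log_{t_\alpha}|f_\alpha(p_\alpha)|=\log_{t_\alpha}|c_\alpha|+\vec b\,\log_{t_\alpha}|w_\alpha|+o(1),
\]
so any finite accumulation point equals $h_K+s\,\delta_K(p)$ with $s=-\lim\log_{t_\alpha}|w_\alpha|\ge 0$ (since $|w_\alpha|\to 0$), i.e.\ a point of the ray $R_{K,p}$.

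The main obstacle is the nodal case: the local model is a degenerating annulus whose Laurent expansion has contributions from both branches, so one must isolate the dominant monomial $c_{\alpha,j}\,x^{b_j}$ on the branch attached to $K$ and show that the complementary terms (aligned with the opposite branch) are small near $p_\alpha$ because $|\epsilon_\alpha|$ is much smaller than the relevant power of $|w_\alpha|$; an adaptation of the maximum principle to annular domains then delivers the uniform bound on the remainder. A secondary subtlety is that $u_\alpha$ converges only subsequentially on the compact torus $\sonen$, but this is absorbed by the translation indeterminacy of $\Phi(K,p)$.
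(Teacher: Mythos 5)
Your marked-point analysis is essentially sound, and it is a more hands-on route than the paper's: the paper rescales by $|f_\alpha(p_\alpha)|^{-1}$, uses compactness of $\bm_{g,k}(\cp^n,d)$ to extract a limiting stable map, observes that since $\delta_K(p)\neq 0$ the points $p_\alpha$ accumulate in a bubble component contracted by the forgetting map \eqref{uft}, and then identifies the image of that two-punctured sphere as a straight holomorphic cylinder via Lemma \ref{lem-punctures}, matching it with $\Phi(K,p)$ by a toric change of coordinates. Two slips in your version are repairable. First, the maximum principle only gives $|g_{\alpha,j}|=O(r_0)+o_\alpha(1)$ on the whole disk; to conclude $g_{\alpha,j}(\zeta w_\alpha)\to 0$ you should invoke the Schwarz lemma (using $g_{\alpha,j}(0)=0$) to get a bound proportional to $|z|/r_0$ at radius $|z|\lesssim|w_\alpha|$. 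Second, your closing claim that the subsequential ambiguity $\vec u\in\sonen$ ``matches the ambiguity built into $\Phi(K,p)$'' misreads Definition \ref{pht-lim}: phases are defined only up to multiplication by $\R^n_{>0}$, not by the full torus, and the angular part is exactly the nontrivial content here. Fortunately your own normalization $\tau_{\alpha,j}c_{\alpha,j}\to c_j$ with $\tau_\alpha\in\R^n_{>0}$ forces $\arg c_\alpha\to\arg c$, so $u_\alpha$ converges without extraction and its limit is the argument of the leading coefficient of $\phi_K$ at $p$, i.e.\ the argument of $\Phi(K,p)$; you should say this rather than appeal to a nonexistent $\sonen$-indeterminacy.

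The genuine gap is the nodal case, which you flag but do not prove, and whose sketched repair would fail as stated. Nothing constrains where $p_\alpha$ sits in the degenerating neck $\{xy=\epsilon_\alpha\}$: it may satisfy $|w_\alpha|\sim|\epsilon_\alpha|^{1/2}$, or lie on the side of the other component $K'$, and then the ``opposite-branch'' terms are comparable to, or dominate, the monomial $c_{\alpha,j}x^{b_j}$, so the hypothesis that $|\epsilon_\alpha|$ is much smaller than the relevant power of $|w_\alpha|$ is simply unavailable. Moreover, for a node the set $U\cap C_\alpha$ contains the whole neck and pieces of both adjacent branches, so you must show that the entire rescaled neck image Hausdorff-converges to a single straight cylinder; circle-by-circle near-constancy of the multiplicative correction does not by itself force the varying ``constants'' at different radii to move along one monomial direction, and you also implicitly need $\Phi(K,p)=\Phi(K',p)$. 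This is exactly what the paper's compactness-plus-bubbling argument delivers: the rescaled family converges (subsequentially) to a stable map in which $p_\alpha$ limits into a contracted component with exactly two special points, whence Lemma \ref{lem-punctures} gives a straight cylinder independent of the side from which $p$ is approached. Without that mechanism, or an equivalently careful Laurent/annulus analysis controlling both tails of the neck, your proof establishes the proposition only for marked points.
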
 
\begin{proof}
To prove the convergence it suffices to show that each subsequence has a subsequence convergent to $\Phi(K,p)$. 
Passing to a subsequence, we may assume that $|f_\alpha(p_\alpha)|^{-1}f(C_\alpha)$ yields a convergent family in $\bm_{g,k}(\cp^n,d)$ with the limiting curve $\bar f_p:\overline C_p\to\cp^n$,  $\pi:\overline C_p\to\widetilde C$, such that $z_\alpha$ converges to $z\in\overline C_p$ in $\bu_{g,k}(\cp^n,d)$.
Since $\delta_K(p)\neq 0$, the point $z$ cannot be a nodal or marked point of $\overline C_p$ and must be contained in a component $K_z\subset\overline C_p$ contracted by $\pi$. 
By Lemma \ref{lem-punctures}, $\bar f_p(K_z^\circ)\subset\ctor$ is a straight holomorphic cylinder.
To see that it coincides with $\Phi(K,p)$ it suffices to change the coordinates in $\ctor$ (and accordingly, the toric compactification $\cp^n\supset\ctor$) so that $\delta_K(p)=(0,\dots,0,-n)$, with $n\in\Z_{>0}$.
Then the projectivization $\bar \phi_K:K\to \cp^n$ of \eqref{Kphase} maps $p$ to a point inside the $n$th coordinate hyperplane, and the argument of $\bar\phi_K(p)\in (\C^\times)^{n-1}\times\{0\}$ determines the argument of $\Phi(p)$.

To locate accumulation points of the sequence $\Log_{t_\alpha}(f_\alpha(p_\alpha))$ we compare it against the sequence $\Log_{t_\alpha}(f_\alpha(z_\alpha)$ in \eqref{hK}. The ratio $f_\alpha(p_\alpha)/f_\alpha(z_\alpha)$ converges to $\bar\phi_K(p)\in (\C^\times)^{n-1}\times\{0\}$ and thus the first $(n-1)$ coordinates of $\Log_{t_\alpha}(f_\alpha(p_\alpha)/f_\alpha(z_\alpha))$ go to zero while the $n$th coordinate is essentially non-positive.
\end{proof}



Let \eqref{falpha} be a  phase-tropically convergent family with the limiting curve $\widehat C\in\bm_{g,k}$.
Let $\widehat C^\circ\subset\widehat C$ be the union of tropically finite components of $\widehat C$.
(Note that $\widehat C^\circ$ may be disconnected or empty.)

We define the extended dual subgraph $\tilde\Gamma$ of $\widehat C^\circ$ to incorporate not only its nodal, but also its marked points in the following way.
The {\em vertices} $v_K\in\tilde\Gamma$ correspond to the components $K\subset\widehat C^\circ$.
{\em Bounded edges} $E_p\subset\tilde\Gamma$ correspond to the nodal points $p\in\widehat C^\circ$,
they connect the vertices corresponding to the adjacent tropically finite components (could be the same component).
Vertices together with bounded edges form the dual graph $\Gamma(\widehat C^\circ)$.
To get the open finite graph $\tilde \Gamma$ we attach to $\Gamma(\widehat C)$ the {\em leaves}, or half-infinite edges, $E_q\approx [0,+\infty)$ corresponding to marked points $q\in\widehat C$ contained in tropically finite components $K$, and also to nodal points $q\in\widehat C$ adjacent simultaneously to a tropically finite component $K$ and to an infinite component of $\widehat C$.
We attach $E_q\approx [0,+\infty)$ to $\Gamma(\widehat C^\circ)$ by
identifying $0\in [0,+\infty)$ with $v_K\in \Gamma(\widehat C^\circ)$. 

Our next goal is to define
\[
\tilde h:\tilde\Gamma\to\R^n.
\]
We set $\tilde h(v_K)=h_K\in\R^n$ using \eqref{hK}.
Let $p\in \widehat C$ be a nodal point between components $K$ and $K'$.
If $v_K=v_{K'}$ we define $\tilde h|_{E_p}$ to be the constant map to $h_K$.
If $v_K\neq v_{K'}$ then y Proposition \ref{nody} $\tilde h(K')-\tilde h(K)=s\delta_p(K)$ with $s> 0$.
In particular, in this case $\delta_K(p)=\delta_{K'}(p)\neq 0$.
We identify a bounded edge $E_p$ with the Euclidean interval of length $s$, and define $\tilde h|_{E_p}$ to be the affine map to the interval $[\tilde h(K),\tilde h(K')]$.
We identify a leaf $E_q$ with the Euclidean ray $[0,+\infty)$, and define $\tilde h|_{E_q}$
to be the affine map 
to the ray emanating from $\tilde h(K)$ in the direction of $\delta_q(K)$ stretching the length $|\delta_q(K)|$ times.

Define $\Gamma$ to be the (open finite) graph obtained from $\tilde \Gamma$ by contracting all edges collapsed to points by $\tilde h$ and
\begin{equation}\label{map-h}
h:\Gamma\to\R^n
\end{equation}
to be the map induced by $\tilde h$.
Each vertex $v\in\Gamma$ corresponds to a connected subgraph $\tilde\Gamma_v\subset\tilde\Gamma$.
We define the genus function $g(v)$ to be the sum of the genera of all components of $\widetilde C$ corresponding to the vertices of $\tilde\Gamma_v$ and the number of cycles in $\tilde\Gamma_v$.
\begin{prop}\label{prop-vancircle}
The map \eqref{map-h} is a parameterised tropical curve.
\end{prop}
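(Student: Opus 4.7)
The plan is to verify three properties of $h:\Gamma\to\R^n$: (i) continuity with complete inner metric and integer slopes on each edge; (ii) the balancing condition at every vertex; and (iii) that every $1$-valent vertex of $\Gamma$ has strictly positive genus.

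Item (i) is essentially built into the construction. The map $\tilde h$ is affine on each edge of $\tilde\Gamma$ with derivative $\delta_K(p)\in\Z^n$ on a bounded edge $E_p$ and $\delta_K(q)\in\Z^n$ on a leaf $E_q$; contracting the constant-image edges produces $\Gamma$ while leaving the surviving slopes unchanged, and $h$ is automatically continuous. Bounded edges inherit finite positive lengths $s>0$ from Proposition \ref{nody}, while leaves are identified with $[0,+\infty)$, so the inner metric on $\Gamma$ is complete.

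Item (ii) is the main point. The key input is the homological identity
\begin{equation*}
\sum_{p\in K\setminus K^\circ}\delta_K(p)=0\in H_1(\ctor)=\Z^n
\end{equation*}
for every component $K\subset\widehat C^\circ$. I would prove it by noting that, in the fixed orientation convention for $\gamma_p$, the $1$-cycle $\sum_p\gamma_p$ is the boundary of the $2$-chain $K\setminus\bigcup_p D_p$ obtained by removing small discs around the punctures; since this $2$-chain lies in $K^\circ$, applying $\phi_K$ pushes the relation into $H_1(\ctor)$. I then sum the identity over all $K$ with $v_K\in\tilde\Gamma_v$ to obtain zero. The contribution of a node $p$ internal to $\tilde\Gamma_v$ appears twice (once from each of the two preimages in the normalisation), and these two terms cancel in $H_1(\ctor)$: the pair of small loops is homologous in the smoothing $C_\alpha$ to the two oppositely-oriented boundary circles of the annular neck replacing $p$, and after passing to the phase-tropical limit their $\phi$-images give opposite integer classes. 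What remains of the summed identity is exactly $\sum_{E\ni v}u(E)$ with edges oriented away from $v$, so this sum vanishes.

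Item (iii) then drops out of (ii). Suppose $v\in\Gamma$ were $1$-valent. Balancing would force $u(E)=0$ for its single adjacent edge $E$. But every edge surviving in $\Gamma$ carries a non-zero integer slope: bounded edges between distinct vertices $v_K\neq v_{K'}$ have $\delta_K(p)\neq 0$ by the statement accompanying the definition of $\tilde h$, and leaves with $\delta_K(q)=0$ are contracted. Hence $\Gamma$ has no $1$-valent vertices at all and the condition is vacuously satisfied. The main technical obstacle is pinning down the sign convention so that the contributions at internal nodes of $\tilde\Gamma_v$ cancel rather than double up; the cleanest approach is to express $\delta_K(p)+\delta_{K'}(p)$ at a node as the image of the oriented $1$-cycle in the smoothed annular neck, where the cancellation becomes manifest, rather than attempting to read it directly off the equality $\delta_K(p)=\delta_{K'}(p)$ stated earlier.
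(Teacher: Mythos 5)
Your proof is correct and follows essentially the same route as the paper's: the slope of each edge is $\delta_K(p)$ read off a vanishing/puncture loop, and balancing comes from the fact that the loops $\gamma_p$ bound $K^\circ$ minus small discs, so their $\phi_K$-images sum to zero in $H_1(\ctor)=\Z^n$ (the paper's ``homology dependence of $\gamma_p$ given by $K^\circ$''). Your extra care with the contraction $\tilde\Gamma\to\Gamma$ and with the sign cancellation at internal nodes (where, with the stated orientation convention, one indeed needs $\delta_K(p)+\delta_{K'}(p)=0$ via the annular neck, not the equality $\delta_K(p)=\delta_{K'}(p)$ asserted earlier in the text) is a legitimate and welcome sharpening of the paper's terse argument rather than a different method.
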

\begin{proof}
Each edge $E_p\subset\Gamma$ corresponds to a marked or nodal point $p$ of $\widehat C$ and thus to an embedded {\em vanishing} circle $\gamma_p$ of $C_\alpha$ for large $t_\alpha$.
The circle $\gamma_p$ is oriented by the choice of a component $K\subset\widehat C$ containing $p$, and thus by a choice of the vertex $v_K$ adjacent to $E_p$.
This choice is equivalent to the orientation of the $E_p$, and thus to the choice of the unit tangent vector to $E_p$. The image $u(E_p)\in\Z^n$ of this vector under $dh$ is given by $\delta_K(p)$
The balancing condition \eqref{balancing} follows from the homology dependance of $\gamma_p$ given by $K^\circ$.
\end{proof}

\begin{defn}
The {\em phase-tropical limit} of a phase-tropically converging family \eqref{falpha} consists of the parameterised tropical curve \eqref{map-h} as well as the phases \eqref{Kphi} for the components $K\subset\widehat C$.
\end{defn}

\begin{rem}
Consideration of $\Gamma$ instead of $\tilde\Gamma$ allows us to ignore tropical lengths of the edges of $\tilde\Gamma$ collapsed by $\tilde h$. It is possible to define the limiting tropical length also on these edges. The resulting edge might appear to be not only finite, but also zero or infinite, see \cite{KaMi}.
\end{rem}

\newcommand{\Arg}{\operatorname{Arg}}
For the following definition we use the identification 
$$\ctor=\R^n\times (S^1)^n$$
given by the (logarithm) polar coordinates identification of $z\in\ctor$ with $(\Log(z),\Arg(z))$,
where $\Arg$ refers to the map of taking the argument coordinatewise. 
The closure of the image 
\[
\Arg_K=\overline{\Arg(\Phi(K))}\subset (S^1)^n.
\] 
is known as the closed {\em coamoeba} of $\Phi(K)\subset\ctor$.
Since $\Phi(p)\subset\ctor$ is a straight holomorphic cylinder, the image $\Arg_p=\Arg(\Phi(p))$ is a geodesic circle in the flat torus $(S^1)^n$.
\begin{defn}
The {\em unparameterised phase-tropical limit} of a phase-tropically converging family \eqref{falpha}
is the set 
\begin{equation}\label{l-psi}
\Psi=
\bigcup\limits_{v_K}
\{\tilde h(v_K)\}\times\Arg_K
\cup
\bigcup\limits_{p}
\{\tilde h(E_p)\}\times\Arg_p,
\end{equation}
where $K$ runs over all components of $\widehat C$ while $p$ runs over all nodal and marked points of $\widehat C^\circ$.
Note that 
$Y=\Log(\Psi)$ 
is an unparameterised tropical curve with the weight data coming from \eqref{map-h}.
\end{defn}

\begin{rem}
Loci \eqref{l-psi} are {\em complex tropical curves} in the terminology of \cite{Mi05}.
In more modern terminology, {\em complex tropical} are replaced with {\em phase-tropical}.
\end{rem}

\begin{theorem}\label{phtr-thm}
If a scaled family of holomorphic curves $f_\alpha:C_\alpha\to\ctor$ converges phase-tropically then for any family $z_\alpha\in C_\alpha$ such that $\limtrop f_\alpha(z_\alpha)\in\R^n$ and $\lim\limits_{t_\alpha\to\infty}\Arg(z_\alpha)\in (S^1)^n$ exist we have 
\begin{equation}\label{limPsi}
(\limtrop f_\alpha(z_\alpha),\lim\limits_{t_\alpha\to\infty}\Arg(z_\alpha))\in\Psi.
\end{equation}
Conversely, any point of $\Psi$ can be presented in the form \eqref{limPsi} for some family $z_\alpha\in C_\alpha$.
\end{theorem}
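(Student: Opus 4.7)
My plan is to extract a convergent subsequence of $z_\alpha$ inside the universal curve $\bu_{g,k}$, split by whether the limit is an interior point of a component or a nodal/marked point of $\widehat C$, and invoke the phase analysis of Propositions \ref{indepK} and \ref{nody} to identify the limit in $\Psi$. The converse direction is handled by constructing approximating sequences in each of the two types of strata of $\Psi$.

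For the forward direction, after passing to a subsequence assume $z_\alpha \to z \in \widehat C$ in $\bu_{g,k}$. If $z \in K^\circ$ for some component $K \subset \widehat C$, Proposition \ref{indepK} gives $\limtrop f_\alpha(z_\alpha) = h_K$; by hypothesis this is finite, so $K$ is tropically finite. Setting $\tau_\alpha = |f_\alpha(z_\alpha)|^{-1}$, the phase-tropical convergence gives $\tau_\alpha f_\alpha \to \bar f_\tau$, whence $\tau_\alpha f_\alpha(z_\alpha) \to \phi_K(z) \in \Phi(K)$; since $\tau_\alpha \in \R_{>0}^n$ leaves arguments unchanged, $\lim \Arg f_\alpha(z_\alpha) = \Arg \phi_K(z) \in \Arg_K$, placing the pair in $\{\tilde h(v_K)\} \times \Arg_K \subset \Psi$. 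If $z = p$ is nodal or marked, the finiteness of the tropical limit forces $p$ to be adjacent to a tropically finite component $K$ (otherwise Proposition \ref{indepK} would push the modulus of $f_\alpha(z_\alpha)$ to an infinite scale on both sides of the neck), so $E_p \subset \tilde\Gamma$; Proposition \ref{nody} then puts the accumulation point of $\Log_{t_\alpha} f_\alpha(z_\alpha)$ in $\tilde h(E_p) \subset \R^n$ and the argument limit in $\Arg_p = \Arg \Phi(p)$, giving the inclusion in $\tilde h(E_p) \times \Arg_p \subset \Psi$.

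For the converse, let $(x, \theta) \in \Psi$. If $(x, \theta) \in \{\tilde h(v_K)\} \times \Arg_K$, choose $w^{(j)} \in K^\circ$ so that $\Arg \phi_K(w^{(j)}) \to \theta$, which is possible because $\Arg_K$ is the closure of $\Arg \Phi(K)$. Lift each $w^{(j)}$ to $w^{(j)}_\alpha \in C_\alpha$ with $w^{(j)}_\alpha \to w^{(j)}$ in $\bu_{g,k}$ and diagonalize: for each $j$ the forward analysis yields $\bigl(\limtrop f_\alpha(w^{(j)}_\alpha), \lim \Arg f_\alpha(w^{(j)}_\alpha)\bigr) = (x, \Arg \phi_K(w^{(j)}))$, and a suitable diagonal sequence $z_\alpha$ realizes $(x, \theta)$. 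If $(x, \theta) \in \tilde h(E_p) \times \Arg_p$, use the local model: for large $\alpha$ the neck of $C_\alpha$ at $p$ is a long thin annulus $A_\alpha$ on which $f_\alpha$, after multiplicative rescaling by an appropriately chosen base point, is close to a monomial map of exponent $\delta_K(p)$. By Proposition \ref{nody} and Lemma \ref{lem-punctures} the rescaled image of $A_\alpha$ Hausdorff-approximates the entire straight holomorphic cylinder $\Phi(p)$; the radial coordinate on $A_\alpha$ controls the tropical position along $\tilde h(E_p)$, while the angular coordinate selects the point of $\Arg_p$. A diagonal argument supplies $z_\alpha \in A_\alpha$ realizing $(x, \theta)$.

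The chief obstacle is the annular analysis in the nodal converse step: one must check that the rescaled image of $A_\alpha$ sweeps out the full cylinder $\Phi(p)$, hitting every angle of the geodesic circle $\Arg_p$, and that intermediate rescalings along the neck continuously interpolate the tropical limit between the two endpoints of $\tilde h(E_p)$. The necessary control comes from the monomial description of $f_\alpha$ near a node combined with Proposition \ref{nody}, which exhibits $\Phi(p)$ as a straight holomorphic cylinder of slope $\delta_K(p)$ in $\ctor$.
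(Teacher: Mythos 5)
Your proposal is correct and follows essentially the same route as the paper's proof: pass to a subsequence so that $z_\alpha$ converges in $\bu_{g,k}$, treat the interior-point case via Proposition \ref{indepK} and the nodal/marked case via Proposition \ref{nody}, and establish the converse by approximation plus a diagonal argument, with the neck annulus sweeping out $\{\tilde h(E_p)\}\times\Arg_p$ in the nodal case. The only cosmetic difference is that you make explicit why finiteness of the tropical limit forces the limit point to lie in $\widehat C^\circ$, a point the paper leaves implicit.
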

\begin{proof}
Passing to a subfamily if needed, we may assume that $z_\alpha$ converge to a point $z\in\widehat C^\circ$.
If $z\in K^\circ$ for some component $K\subset\widehat C^\circ$ then by Proposition \ref{indepK} $\limtrop f_\alpha(z_\alpha)=\tilde h(v_K)$, while $\lim\limits_{t_\alpha\to\infty}\Arg(z_\alpha)=\Arg(\phi_K(z))$.
Conversely, to present a point $(\tilde h(v_K),\Arg(\phi_K(z)))$ in the form \eqref{limPsi}, it suffices to approximate $z$ by $z_\alpha\in C_\alpha$.
To present a point $(\tilde h(v_K),\xi)$, $\xi\in\Arg_K\setminus\Arg(\Phi(K))$, we first approximate $\xi$ with points from $\Arg(\Phi(K))$, approximate them as above, and then use the diagonal process.

If $z\in\widehat C^\circ$ is a nodal or marked point then by Proposition \ref{nody}
$(\limtrop f_\alpha(z_\alpha),\lim\limits_{t_\alpha\to\infty}\Arg(z_\alpha))\in\{\tilde h(E_p)\}\times\Arg(\Phi(p))$.
Consider a small neighborhood $W\ni p$ in the universal curve $\bu_{g,k}$ such that $W\cap\widehat C$ consists of one or two disks (depending on whether $p$ corresponds to a leaf or to a bounded edge of $\tilde\Gamma$).
Then the image $(\Log_{t_\alpha},\Arg) (f_\alpha (W\cap C_\alpha))$ is a connected annulus converging to $\{\tilde h(E_p)\}\times\Arg_p$. This implies that any point in $\{\tilde h(E_p)\}\times\Arg_p$ is presentable in the form \eqref{limPsi}.
\end{proof}

\begin{theorem}[Phase-tropical limit compactness theorem]
Let $f_\alpha:C_\alpha\to\tor\subset\cp^n$, $t_\alpha\to+\infty$, $\alpha\in A$, be a scaled family of curves of degree $d$,
where the source curves $C_\alpha$ are Riemann surfaces of genus $g$ with $k$ punctures.
Then there exists a scaling subsequence $t_\beta\to+\infty$, $\beta\in B\subset A$, such that the subfamily $f_\beta:C_\beta\to\tor$ converges phase-tropically.
\end{theorem}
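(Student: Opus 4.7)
The plan is to extract a phase-tropically convergent subsequence by three successive applications of compactness, exploiting the fact that a stable curve $\widehat C\in\bm_{g,k}$ has only finitely many components, bounded in terms of $g$ and $k$.

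First, by the Deligne--Mumford compactness of the moduli space $\bm_{g,k}$ of stable curves of genus $g$ with $k$ punctures, together with the compactness of the moduli space $\bm_{g,k}(\cp^n,d)$ of stable maps of degree $d$, I can pass to a subsequence $t_\beta\to+\infty$ along which $\overline f_\alpha:\overline C_\alpha\to\cp^n$ converges to some stable map $\bar f_\infty:\overline C_\infty\to\cp^n$, and its image $\widehat C\in\bm_{g,k}$ under the forgetting map \eqref{ft} is the limit of $\overline C_\alpha$ in $\bm_{g,k}$. This secures the first bullet of Definition \ref{pht-lim}.

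Second, I enumerate the components $K_1,\dots,K_N$ of $\widehat C$ (a finite list). Working with $K_1$ first, I choose any non-nodal, non-marked point $z^{(1)}\in K_1^\circ$ and approximate it by $z^{(1)}_\alpha\in C_\alpha$ converging to $z^{(1)}$ in $\bu_{g,k}$. Setting $\tau^{(1)}_\alpha=|f_\alpha(z^{(1)}_\alpha)|^{-1}$ as in \eqref{tau}, the translated family $\overline{\tau^{(1)}_\alpha f_\alpha}:\overline C_\alpha\to\cp^n$ stays in $\bm_{g,k}(\cp^n,d)$ since multiplicative translation by $\R^n_{>0}$ preserves the degree and stability class. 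By compactness of $\bm_{g,k}(\cp^n,d)$ I may pass to a further subsequence on which it converges to a stable map \eqref{bftau}. Then, using compactness of $[-\infty,+\infty]^n$, I pass to yet another subsequence to ensure that $h_{K_1}=\limtrop f_\alpha(z^{(1)}_\alpha)$ exists. I repeat this construction for $K_2,\dots,K_N$ successively; each step is a further passage to a subsequence that preserves all previously secured convergences, and $N$ is finite, so the procedure terminates with a single scaling subsequence $t_\gamma\to+\infty$ along which the second bullet of Definition \ref{pht-lim} holds for every component.

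The main obstacle is to check that the translated families $\overline{\tau^{(i)}_\alpha f_\alpha}$ indeed stay in a compact moduli space so that Kontsevich--Manin compactness applies to each of them; this reduces to the observation that the degree in $\cp^n$ is invariant under multiplication by $\R^n_{>0}\subset\ctor$, while the stability type of the source curve is unaffected (only the map changes, and the topological bounds on $g$ and $k$ remain the same). Once this is noted, the argument is a straightforward diagonal extraction across the finite list of components, and the existence of the resulting phase-tropical limit together with its parameterised tropical graph \eqref{map-h} is guaranteed by Propositions \ref{indepK} and \ref{nody} applied to the limiting subsequence.
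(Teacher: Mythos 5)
Your proposal is correct and follows essentially the same route as the paper's proof: compactness of $\bm_{g,k}$ (equivalently, of $\bm_{g,k}(\cp^n,d)$ followed by the forgetting map \eqref{ft}) to fix $\widehat C$, then for each of the finitely many components a choice of base point $z\in K^\circ$ with an approximating family $z_\alpha$, and compactness of $\bm_{g,k}(\cp^n,d)$ together with $\R\cup\{\pm\infty\}$ to extract convergence of \eqref{btaufalpha} and of \eqref{hK}, iterated over the finite list of components. The only detail the paper makes explicit that you gloss over is how the families $z_\alpha\to z$ are produced, namely by intersecting $C_\alpha$ with a small disk in $\bu_{g,k}$ transversal to $\widehat C$ at $z$, which for large $t_\alpha$ meets $C_\alpha$ in a single point.
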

\begin{proof}
By compactness of $\bm_{g,k}$ we may ensure convergence of $\overline C_\alpha$ to $\widehat C\in\bm_{g,k}$ after passing to a subfamily.
For each component $K\subset\widehat C$ we choose $z\in K^\circ$ and a small transversal disk $\Delta_z\subset\bu_{g,k}$ to $\widehat C$ at $z$.
Then for large $t_\alpha$ the intersection $\Delta_z\cap C_\alpha\subset\bu_{g,k}$
consists of a single point, and defines a family $z_\alpha\in C_\alpha$ converging to $z$.
Compactness of 
$\bm_{g,k}(\cp^n,d)$ and that of $\R\cup\{\pm\infty\}$ ensures convergence of the family \eqref{btaufalpha} and the limit \eqref{hK} after passing to a subfamily.
\end{proof}

\subsection{Tropical limits of non-commutative amoebas in $\Hy$}
\newcommand{\ok}{\overline\varkappa}
It turns out that non-commutative amoebas, considered in the first three sections of the paper, also admit interesting tropical limits.
But, due to non-commutativity of hyperbolic translations, passing to such limit is only possible once we distinguish the origin point $0\in\Hy$.
This choice determines the rescaling map
$
\Hy\ni x\mapsto sx\in \Hy
$ 
for every $s>0$ homeomorphically mapping $\Hy$ to itself.
This map extends to a homeomorphism 
\[
\bHy\stackrel{\approx}\to\bHy,\ x\mapsto sx,
\]
by setting it to be the identity on $\dd\Hy$.
We define 
\begin{equation}\label{varkappat} 
\varkappa_t:\I\to\Hy, \ \varkappa_t(z)=\frac1{\log t}\varkappa(z),
\end{equation}
$t>1$, as the rescaling of the map \eqref{varkappa},
and denote by
\begin{equation}\label{barvarkappat} 
\overline\varkappa_t:\overline\I\to\bHy, 
\end{equation}
the corresponding compactified map.
The images $\ok(V)\subset\bHy$ of algebraic varieties $V\subset\overline\I$ are nothing but their rescaled hyperbolic amoebas.
In particular, they are closed sets.

In this section we show that when $t\to\infty$ these rescaled amoebas of curves converge to the {\em $\Hy$-tropical spherical complexes} we define below.
\begin{defn}
The {\em $\Hy$-floor diagram} $\Delta$ of degree $d>0$ is a 
finite graph with the set of vertices $\operatorname{Vert}(\Delta)$, the set of edges $\operatorname{Edge}(\Delta)$, and the following additional data.
 \begin{itemize}
\item There is a map $$r:\operatorname{Vert}(\Delta)\to [0,\infty]$$ called the vertex {\em width}.
We distinguish the subset $$\operatorname{Vert}^0(\Delta)=r^{-1}(0), \
\operatorname{Vert}^{+}(\Delta)=r^{-1}(0,\infty),\
\operatorname{Vert}^{\infty}(\Delta)=r^{-1}(\infty)
$$ of vertices of zero, positive and infinite width.
\item There is a map $$\varphi:\operatorname{Edge}(\Delta)\to \dd\Hy=S^2$$ called the edge {\em angle}.
\item There is a map $$(d_+,d_-):\operatorname{Vert}^{+}(\Delta)
\cup\operatorname{Vert}^{\infty}(\Delta)
\to\Z_{\ge 0}^2$$ called the vertex {\em bidegree} as well as a {\em degree} map
$$\delta:\operatorname{Vert}^{0}(\Delta)\to\Z_{\ge 0}.$$
\item There is an {\em edge weight} map $$w:\operatorname{Edge}(\Delta)\to\Z_{>0}.$$
\end{itemize}
This data is subject to the following properties.
\begin{itemize}
\item
No edge may connect vertices of the same width. 
In particular, $\Delta$ is loop-free.
\item
\begin{equation}
\label{total-deg}
\sum\limits_{v\in\operatorname{Vert}^0(\Delta)}\delta(v)+\sum\limits_{v\in\operatorname{Vert}^+(\Delta)}(d_+(v)+d_-(v))=d.
\end{equation}

\item For every $v\in \operatorname{Vert}(\Delta)$ we define $\operatorname{div}(v)$ to be the sum of the weights of the edges connecting $v$ to vertices whose width is larger than $v$ minus the sum of the weights of the edges connecting $v$ to vertices whose width is smaller than $v$ and require that
\begin{equation}\label{eq-div}
2(d_+(v)+d_-(v))=\operatorname{div}(v), 
 \ \ \text{and}\ \
2\delta(v_0)=\operatorname{div}(v_0)
\end{equation}
for $v\in\operatorname{Vert}^+(\Delta)$ and $v_0\in\operatorname{Vert}^0(\Delta)$.

\item
If $d_+(v)=0$, $v\in \operatorname{Vert}^+(\Delta)\cup\operatorname{Vert}(\Delta)^{\infty}$, then we have
\begin{equation}
\label{d+0}
\varphi(E)=\varphi(E')
\end{equation}
whenever $E,E'$ are two edges of $\Delta$ adjacent to $v$.
In this case we set $\varphi(v)=\varphi(E)$.

\end{itemize}
\end{defn}
Given $0\le r\le \infty$ and $\varphi\in \dd\Hy=S^2$ we define $(r,\varphi)\in\bHy$ to be the point on the compactified geodesic ray $R_\varphi\subset\bHy$ connecting $0\in\Hy$ to $\phi$ such that the distance between $(r,\varphi)$ and $0$ equals to $r$.
For $0\le r_1\neq r_2\le \infty$ we define $R_\varphi[r_1,r_2]\subset R_\varphi$ to be the interval of points whose distance to $0$ is between $r_1$ and $r_2$.
Denote by $S^2(r)\subset\bHy$ the sphere of radius $0\le r\le\infty$ and center $0$, in particular, $S^2(0)=\{0\}$, $S^2(\infty)=\dd\Hy$.
The coordinates $(\rho,\varphi)$ can be thought of as polar coordinates in $\bHy$.
The first coordinate gives the map
\begin{equation}\label{map-rho}
\rho:\bHy\to [0,\infty]
\end{equation}
measuring the distance to the origin $0\in\Hy$.
The second coordinate gives the map
\begin{equation}\label{map-phi}
\varphi:\bHy\setminus \{0\}\to \dd\Hy=S^2
\end{equation}
corresponding to the projection from the origin $0$ to the absolute $\dd\Hy$.

Let $v\in\operatorname{Vert}(\Delta)$.
If $d_+(v)>0$ we define $\Theta(v)=S^2(r(v))$.
If $d_+(v)=0$ we define $\Theta(v)=\{(r(v),\varphi(v))\}$.
For $E\in\operatorname{Edge}(\Delta)$ an edge connecting vertices $v_1$ and $v_2$ we define $\Theta(E)=R_{\varphi(E)}[r(v_1),r(v_2)]$.

\begin{defn}
The {\em $\Hy$-tropical spherical complex} associated to an { $\Hy$-floor diagram} $\Delta$ is
the set
\begin{equation}\label{Theta}
\Theta(\Delta)=\bigcup\limits_{v\in\operatorname{Vert}(\Delta)} \Theta(v)\ \ \cup\bigcup\limits_{E\in\operatorname{Edge}(\Delta)}\Theta(E)\ \subset \bHy.
\end{equation}
\end{defn}

\begin{exa}
Figure \ref{exa-tsc} depicts a $\Hy$-tropical spherical complex of degree 3. It consists of two concentric circles (representing spheres in the actual 3D picture) corresponding to two vertices of $\Delta$ of bidegree $(1,0)$. One edge has an endpoint inside the inner circle. This endpoint correspond to a vertex of bidegree $(0,1)$ and thus gives a point rather than a sphere in $\Hy$. This vertex has a single adjacent edge of weight 2 and thus conforms to \eqref{eq-div} . All other edges have weight 1.
The six outermost edges end at the absolute at six vertices of bidegree $(0,0)$.
\begin{figure}[h]
\includegraphics[height=98mm]{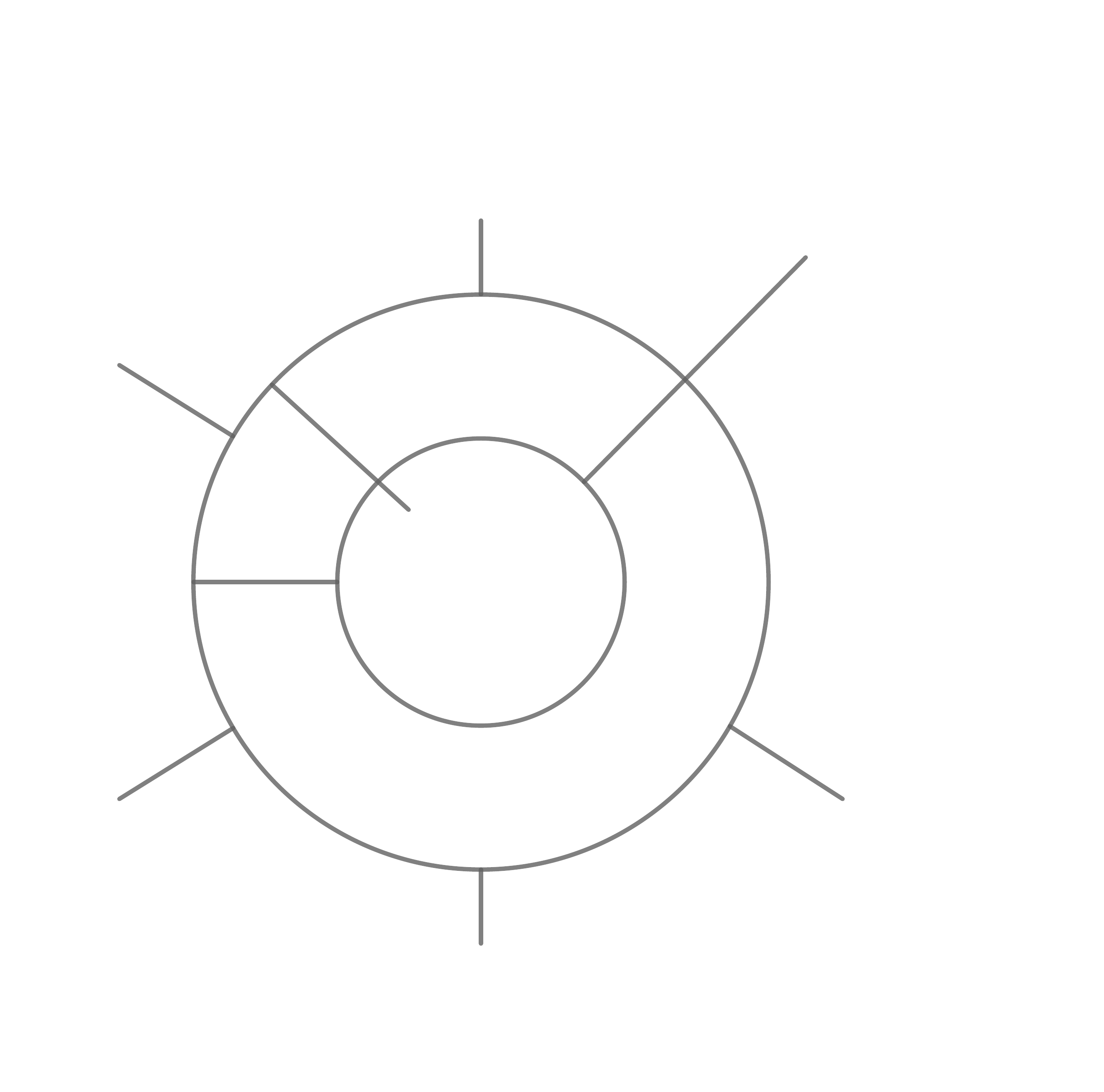}
\caption{A $\Hy$-tropical spherical complex is a limit of amoebas of curves.
\label{exa-tsc}}
\end{figure}
\end{exa}


Consider the locus
\[
P=\varkappa^{-1}(0)=\operatorname{SO}(3)\approx\rp^3\subset\I.
\]
It is the fixed locus of the antiholomorphic involution 
\begin{equation}\label{Pconj}
\begin{pmatrix}
a & b\\
c & d
\end{pmatrix}
=A \mapsto \bar A^*=
\begin{pmatrix}
\bar d & -\bar b\\
-\bar c & \bar a
\end{pmatrix}
\end{equation}
In homogeneous coordinate $(a+d:i(a-d):ib:ic)$ this involution is the standard complex conjugation, so $P\approx\rp^3$ can be thought of the real locus of $\overline\I=\cp^3$.
In particular, after this coordinate change, the lines in $\cp^3$ invariant with respect to the involution \eqref{Pconj} are nothing but the lines defined over $\R$. We call such lines {\em $P$-real} lines.
Their intersection with $P$ is diffeomorphic to the circle $\rp^1$.
\begin{lem}\label{geo0}
A line $l\subset\overline\I=\cp^3$ is $P$-real if and only if its amoeba $\varkappa(l\cap\I)$ is a geodesic line passing through $0\in\Hy$.
\end{lem}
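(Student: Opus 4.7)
The plan is to reduce both directions of the biconditional to a concrete check on the two intersection points $(q_1,p_1),(q_2,p_2)\in l\cap Q$, by first computing how the antiholomorphic involution $\sigma$ of \eqref{Pconj} acts on $Q=\cp^1\times\cp^1$ and then matching the result against Proposition \ref{line-descr}.

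My first step is to verify by direct calculation (writing a rank-1 matrix as an outer product $\beta\alpha^T$ and tracking what $\sigma$ does to its kernel and image) that under the identification $Q=\cp^1\times\cp^1$ via $\pi_\pm$, the involution $\sigma|_Q$ is the antipodal map on each factor,
\[
\sigma(\alpha,\beta)=\bigl(-1/\bar\alpha,\ -1/\bar\beta\bigr),
\]
so that $\sigma|_Q$ has no fixed points at all (the equation $\alpha=-1/\bar\alpha$ forces $|\alpha|^2=-1$). From this I would rule out the degenerate cases for a $P$-real line $l$: it cannot be tangent to $Q$ (the unique tangency point would have to be $\sigma$-fixed on $Q$), and it cannot lie in $Q$ (the two rulings of $Q$ are the fibers of $\pi_-$ and $\pi_+$, and the antipodal-on-each-factor formula interchanges these two rulings). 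Hence every $P$-real line is transverse to $Q$, and $\sigma$ must swap the two points of $l\cap Q$, which yields the pair of equations
\[
q_2=-1/\bar q_1,\qquad p_2=-1/\bar p_1.
\]

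Both directions then follow directly from Proposition \ref{line-descr}(1). The first equation says exactly that $\am_l$ is a geodesic line $\gamma\subset\Hy$. The endpoints of $\gamma$ on $\dd\Hy$ are the $\overline\varkappa$-images of $l\cap Q$, and by the definition of $\overline\varkappa$ on $\dd\I$ they equal $\pi_+(q_i,p_i)=p_i$. The second equation then says $p_1,p_2$ are antipodal on the Riemann sphere $\dd\Hy=\cp^1$, which characterizes $\gamma$ as a diameter of the Poincar\'e ball, so $\gamma$ passes through $0\in\Hy$; this finishes the forward direction. For the converse, ``$\am_l$ is a geodesic through $0$'' forces $l$ to be transverse to $Q$ (lines inside $Q$ have empty amoeba, and tangent lines give horospheres by Proposition \ref{p_hor}); Proposition \ref{line-descr}(1) supplies $q_2=-1/\bar q_1$, and the through-$0$ hypothesis supplies the antipodal relation $p_2=-1/\bar p_1$; these two relations say precisely that $\sigma$ interchanges the two points of $l\cap Q$, so $\sigma(l)$ is the (unique) line through the same two points as $l$, i.e.\ $\sigma(l)=l$. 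The main obstacle I anticipate is the computation of $\sigma|_Q$ and the careful bookkeeping that the endpoints of $\am_l$ on $\dd\Hy$ are governed by the image factor $p_i$ rather than by the kernel factor $q_i$; once those facts are in hand the rest is direct.
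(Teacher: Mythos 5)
Your argument is correct, but it takes a genuinely different route from the paper's. The paper works inside $\Hy$: by Proposition \ref{line-descr}, when the amoeba is a geodesic the map $\varkappa|_{l\cap\I}$ is a circle bundle, so a geodesic through $0$ forces $l\cap\varkappa^{-1}(0)\approx S^1$, while a line is $P$-real precisely when $l\cap P$ is infinite (a non-invariant line meets $P\subset l\cap\sigma(l)$ in at most one point); conversely a $P$-real line meets $P$ in a circle, which is incompatible with injectivity of $\varkappa|_{l\cap\I}$ in the cylinder and horosphere cases. You instead work entirely on the boundary quadric: you reduce $P$-reality to the pair of antipodal relations $q_2=-1/\bar q_1$, $p_2=-1/\bar p_1$ on $l\cap Q$, and then split these between Proposition \ref{line-descr}(1) (geodesic) and the endpoint bookkeeping $\overline\varkappa|_Q=\pi_+$ (through $0$). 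Your version is longer but yields an explicit coordinate characterization of the relevant lines; the paper's is shorter but leans on the circle-bundle clause of Proposition \ref{line-descr}(1).

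Two caveats. First, a small internal slip: the antipodal-times-antipodal map does \emph{not} interchange the two rulings of $Q$; it preserves each ruling, sending the fiber over $\alpha_0$ to the fiber over $-1/\bar\alpha_0$. Your conclusion (no line inside $Q$ is invariant) still follows at once from the absence of fixed points, so this is harmless but should be restated. Second, the matrix displayed in \eqref{Pconj} appears to contain a transposition typo: taken literally it acts on $Q$ by $(\alpha,\beta)\mapsto(\bar\beta,\bar\alpha)$, which has fixed points on $Q$, so its fixed $\rp^3$ cannot be $\varkappa^{-1}(0)$. Your asserted formula $\sigma(\alpha,\beta)=(-1/\bar\alpha,-1/\bar\beta)$ is the correct one for the intended involution $A\mapsto (A^*)^{-1}$, i.e.\ $A\mapsto\begin{pmatrix}\bar d & -\bar c\\ -\bar b & \bar a\end{pmatrix}$ up to scalar, whose fixed locus is $P$; since $P$-reality means invariance under that involution, your proof goes through, but the computation you promise should be carried out with this formula.
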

\begin{proof}
Suppose that $\varkappa(l\cap\I)$ is a geodesic line passing through $0\in\Hy$.
Then, by Proposition \ref{line-descr}, $\varkappa^{-1}(0)\cap l\approx S^1$.
But $l$ is real if and only if $l\cap\rp^3$ is infinite.

Conversely, if a line $l$ is $P$-real then by Proposition \ref{line-descr} its amoeba is a geodesic containing the origin.
\end{proof}

For $A\in\cp^3\setminus P$ there exists a single $P$-real line $l_A$ passing through $A$. It is the line passing through $A$ and $\bar A^*$.
Define the map
\begin{equation} \label{piP}
\pi_P:\overline\I\setminus P\to Q
\end{equation}
by setting $\pi_P(A)$ to be one of the two points of intersection $l_A\cap Q$.
Namely, we note that $l\setminus P$ consists of two open half-spheres in the sphere $l\approx\cp^1$, and set $\pi_P(A)$ to be the unique point in $l_A\cap Q$ contained in the same component of $l\setminus l\cap P$ as $A$.

\begin{coro}
The map \eqref{piP} is a continuous map that agrees with the map \eqref{map-phi} under $\overline\varkappa$, i.e. $\overline\varkappa\circ\pi_P=\varphi\circ\overline\varkappa$.
\end{coro}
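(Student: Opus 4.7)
The plan is to first verify that $\pi_P$ is well-defined, then establish its continuity, and finally prove the identity by relating both sides to the amoeba structure of $l_A$. For $A\in\overline\I\setminus P$ the points $A$ and $\bar{A}^*$ are distinct, so $l_A$ is the unique $P$-real line through $A$. By Lemma \ref{geo0} its amoeba is a geodesic $\gamma\subset\Hy$ through $0$, and Proposition \ref{p_cyl} then forces $l_A$ to be transverse to $Q$, so $l_A\cap Q=\{Z_+,Z_-\}$ consists of two distinct points. The involution $\sigma\colon A\mapsto\bar{A}^*$ preserves $Q$ (since $\overline{ad-bc}$ vanishes exactly when $ad-bc$ does) and restricts to $l_A\approx\cp^1$ as an antiholomorphic involution whose fixed locus is the circle $l_A\cap P\approx\rp^1$; as $P\cap Q=\emptyset$ (because $P\subset\I$), this reflection swaps $Z_+$ and $Z_-$, placing them in distinct open hemispheres $H^\pm$ of $l_A\setminus(l_A\cap P)$. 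Hence exactly one of $Z_\pm$ lies in the hemisphere containing $A$, and $\pi_P(A)$ is uniquely specified.

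\textbf{Continuity.} I would work with the double cover $E=\{(A,Z)\in(\overline\I\setminus P)\times Q\mid Z\in l_A\cap Q\}\to\overline\I\setminus P$. The assignment $A\mapsto l_A$ is continuous into the Grassmannian of lines in $\cp^3$, and the unordered pair $l_A\cap Q$ varies continuously with $l_A$. The ``same hemisphere'' condition, that $A$ and $Z$ lie on the same side of the real circle $l_A\cap P\subset l_A$, defines an open subset of $E$, since $l_A\cap P$ varies continuously with $A$ and never contains $A$. By the previous paragraph this open subset meets each fiber of $E\to\overline\I\setminus P$ in exactly one point, so it is an open-and-closed sheet of the double cover and yields a continuous section, which is precisely $\pi_P$.

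\textbf{The identity.} By Lemma \ref{geo0}, $\overline\varkappa$ sends $l_A$ onto the closure $\overline\gamma\subset\bHy$ of a geodesic through $0$, with absolute endpoints $\overline\varkappa(Z_\pm)\in\dd\Hy$, and sends the entire circle $l_A\cap P$ to $0$. By Proposition \ref{line-descr}(1), $\varkappa|_{l_A\cap\I}$ is an $S^1$-bundle over $\gamma$, so the two components of $\varkappa^{-1}(\gamma\setminus\{0\})\cap l_A$ are cylinders mapping onto the two open half-geodesics of $\gamma\setminus\{0\}$; their closures in $l_A$ are closed disks with common boundary $l_A\cap P$ and so coincide with the closed hemispheres $\overline{H^\pm}$, labelled so that $Z_\pm\in H^\pm$. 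Hence $\overline\varkappa(\overline{H^\pm})$ is the closed ray in $\overline\gamma$ from $0$ to $\overline\varkappa(Z_\pm)$. Since $\pi_P(A)$ is the unique $Z_\pm$ lying in the hemisphere containing $A$, the point $\overline\varkappa(A)$ lies on the closed ray from $0$ towards $\overline\varkappa(\pi_P(A))$, and therefore $\varphi(\overline\varkappa(A))=\overline\varkappa(\pi_P(A))$.

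\textbf{Main obstacle.} Well-definedness and the identity reduce to clean connectedness and symmetry arguments once we know that $\sigma$ splits $l_A\cap Q$ one per hemisphere. The only subtle point is the continuity of $\pi_P$: stability of the ``same-side'' choice under deformation of $A$ relies on the fact that $A$ never meets $P$ and that the separating circle $l_A\cap P$ depends continuously on $A$.
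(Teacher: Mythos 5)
Your argument is correct and takes essentially the same route as the paper, whose one-line proof just observes (via Lemma \ref{geo0}) that each fiber of $\pi_P$ --- the hemisphere of a $P$-real line cut out by its real circle --- has amoeba equal to the corresponding fiber of $\varphi$, namely a geodesic ray emanating from the origin; your write-up merely fills in the well-definedness and continuity details that the paper leaves implicit. One small citation fix: transversality of $l_A$ to $Q$ follows from Proposition \ref{p_hor} (a tangent line has a horosphere, not a geodesic, as its amoeba) together with the fact that lines lying on $Q$ have empty amoeba, rather than from Proposition \ref{p_cyl}, whose implication goes the other way.
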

\begin{proof}
By Lemma \ref{geo0}, the amoeba of the fiber of  \eqref{piP} is the fiber of \eqref{map-phi}.
\end{proof}

\begin{rem}
Clearly, the map \eqref{piP} is not only continuous, but also smooth in the sense of (real) differential topology. In particular, it presents the space $\overline\I$ as a tubular neighborhood of the hyperboloid $Q$ in $\cp^3$. 
All fibers of \eqref{piP} are open hemispheres in some lines in $\cp^3$.
In particular, they are holomorphic curves.
Nevertheless, the map \eqref{piP} is not holomorphic.

To see this, consider a line $l$ close to a generator $\{z\}\times\cp^1$ of the hyperboloid $Q=\cp^1\times\cp^1$, but intersecting $Q$ at two distinct points.
Its image $\pi_P(l)\subset Q$ must be homologous to $\{z\}\times\cp^1$.
If it were holomorphic then $\pi_P(l)$ would have to be a generator itself.
But then we get a contradiction with the inclusion $Q\cap l\subset\pi_P(l)$.
\end{rem}

Suppose that $V_\alpha\subset\cp^3$, $t_\alpha\to\infty$, $\alpha\in A$, is a scaled family of irreducible algebraic curves and $\Theta(\Delta)$ is the $\Hy$-tropical spherical complex associated to an $\Hy$-floor diagram $\Delta$.
For an interval $I=[r_1,r_2]\subset [0,\infty]$ we define
$$
\Theta(I)=\Theta(\Delta)\cap\rho^{-1}(I)\subset\bHy,
$$
and $$\Delta(I)\subset\Delta$$ to be the open subgraph consisting of all vertices $v\in\Delta$ such that $\Theta(v)\subset\Theta(I)$ as well as all open (i.e. not including adjacent vertices) edges $E\subset\Delta$ such that $\Theta(E)\cap \Theta(I)\neq\emptyset.$
Given a component $K_\Delta\subset\Delta(I)$ we define
\[
\Theta_I(K_\Delta)=\Theta(I)\cap (\bigcup\limits_{v}\Theta(v)\cup \bigcup\limits_{E}\Theta(E)),
\]
where $v$(resp. $E$) goes over all vertices (resp. edges) of $\Delta$ contained in $\Delta(I)$.

We define
$
V_\alpha(I)
$
to be the normalization of 
$$
(\rho\circ\overline\varkappa_{t_\alpha})^{-1}(I)\cap V_\alpha,
$$
i.e. the proper transform of $(\rho\circ\overline\varkappa_{t_\alpha})^{-1}(I)$ under the normalization map $\tilde V\to V$.

For $t>0$ define the homeomorphism 
\begin{equation}
H_t:\bHy\to\bHy, 
\end{equation}
by the properties
\begin{itemize}
\item
$\overline\varkappa(H_t(z))=(\frac 1{\log t}\rho,\varphi)$ if $H_t(z)=(\rho,\varphi)$, $z\in \bHy$,
\item
$H_t(z)=z$ if $z\in Q$,
\item
$\iota(H_t(z))=\iota(z)$ if $z\in \bHy\setminus Q$.
\end{itemize}
Here $\iota:\I\to P=\operatorname{SO}(3)\approx\rp^3$ is the coamoeba map \eqref{iota}.
We have
\begin{equation}\label{htpiP}
\overline\varkappa\circ H_t=\overline\varkappa_t.
\end{equation}

\begin{defn}
Let $V_\alpha\subset\cp^3$, $t_\alpha\to\infty$, $\alpha\in A$,
be a scaled family of irreducible algebraic curves and
$\Theta(\Delta)$ be the $\Hy$-tropical spherical complex associated to an $\Hy$-floor diagram $\Delta$.
We say that the family $V_\alpha$ {\em $\varkappa$-tropically converges} to $\Theta(\Delta)$ if 
$$
\overline\varkappa_{t_\alpha} (V_\alpha) \to \Theta(\Delta)\subset\bHy  
$$
when $t_\alpha\to\infty$ (in the Hausdorff metric on subsets of $\bHy$),
and for every interval $[r_1,r_2]=I\subset [0,\infty]$ such that $\dd I\cap r(\operatorname{Vert}^+(\Delta))=\emptyset$
there is a 1-1 correspondence between the components $K_\Delta\subset\Delta(I)$ and the components
$K_\alpha\subset V_\alpha(I)$ for sufficiently large $t_\alpha$ with the following properties.
\begin{itemize}
\item
The amoebas $\overline\varkappa_t(K_\alpha)$ of the subsurfaces $K_\alpha\subset V_\alpha$ converge to $\Theta_I(K_\Delta)$.

\item
If $K_\Delta$ is vertex-free, i.e. consists of a single (open) edge $E\subset\Delta$, then the following conditions hold.
\begin{itemize}
\item
The subsurface $K_\alpha$ is homeomorphic to an annulus.
\item
There exists a point $q(E)\in Q$ such that $K_\alpha$ converges to $\{q\}$, $t_\alpha\to\infty$, in the Hausdorff metric on subsets of $\overline\I$.
The point $q(E)$ depends only on the edge $E$ and not on the interval $I$ (as long as $E\subset\Delta(I)$ is a component so that $q(E)$ is defined).
\item
The annuli $\Psi(K_\alpha)=H_{t_\alpha}(K_\alpha)$ converge to the annulus $$\Psi_I(K_\Delta)=\pi_P^{-1}(q)\cap\varkappa^{-1}(\Theta_I(K_\Delta))$$ in the Hausdorff metric on subsets of $\overline\I$ while going $w(E)$ times around it, i.e. so that in a small regular neighborhood $W\supset \Psi_I(K_\Delta)$ the cokernel of the homomorphism
\[
\Z\approx H_1(\Psi(K_\alpha))\to H_1(W)=H_1(\Psi_I(K_\Delta)) \approx\Z
\]
induced by the inclusion $\Psi(K_\alpha)\subset W$ for large $t_\alpha$ is of cardinality $w(E)$.
\end{itemize}
\item 
If $K_\Delta$ contains a single vertex $v$
then each component of the boundary $\dd K_\alpha$
corresponds to an edge $E$ adjacent to $v$.
Define $U(v)$ to be the union of small ball neighborhoods of the points $q(E)$.
\begin{itemize}
\item
If $v\in\operatorname{Vert}^+(\Delta)\cup\operatorname{Vert}^\infty(\Delta)$
then the fundamental class $[K_\alpha]$ is an element of
$$H_2(\overline\I\setminus P,U(v))=H_2(\overline\I\setminus P)=H_2(Q)=\Z^2$$
and we require that $[K_\alpha]=(d_+(v),d_-(v))$ for sufficiently large $t_\alpha$.
\item
If $v\in\operatorname{Vert}^0(\Delta)$
then the fundamental class $[K_\alpha]$ is an element of
$$H_2(\overline\I,U(v))=H_2(\overline\I)=H_2(\cp^3)=\Z$$
and we require that $[K_\alpha]=\delta(v)$ for sufficiently large $t_\alpha$.
\end{itemize}
\end{itemize}
\end{defn}

\ignore{
Note that the definition implies that the point $q(E)\in Q$ depends only on the edge $E\subset\Delta$, and not on an interval $I\subset (0,\infty)$ such that $\rho(\Theta(E))\cap I\neq\emptyset$
since $E$ is connected.

For a closed interval $I\subset (0,\infty)$ and large $t_\alpha$ we define 
\begin{equation}\label{PsiE}
\Theta_\alpha=\frac1{\log t_\alpha} 
\Psi_\alpha(E)=H_{t_\alpha}(K_\alpha)
\end{equation}
}



\begin{theorem}
Let $V_\alpha\subset\overline\I=\cp^3$, $t_\alpha\to\infty$, $\alpha\in A$, be a scaled family of reduced irreducible algebraic curves of degree $d$ not contained in the quadric $Q\subset\overline\I$.
Then there exists a subfamily $t_\beta\to\infty$, $\beta\in B\subset A$,
such that $V_\beta$ $\varkappa$-tropically converge to
the $\Hy$-tropical spherical complex 
\begin{equation}\label{lim-thm}
\Theta(\Delta)=\lim\limits_{t_\beta\to\infty}\overline\varkappa_{t_\beta} (V_\beta)\subset\bHy
\end{equation}
associated to an $\Hy$-floor diagram $\Delta$.
\end{theorem}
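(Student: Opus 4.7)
The plan is to extract a tropical limit by combining Hausdorff compactness of closed subsets of $\bHy$ with the phase-tropical compactness machinery from Section 4.2 applied to the projection of the family to the quadric $Q=\cp^1\times\cp^1$.

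First I would take normalizations $\nu_\alpha:\tilde V_\alpha\to V_\alpha$; since $\deg V_\alpha=d$ is fixed, the geometric genus of $\tilde V_\alpha$ and the number of punctures in $\nu_\alpha^{-1}(Q)$ are uniformly bounded. Passing to a subsequence, the compactness theorem for stable marked curves gives convergence of the closed marked curves $\overline{\tilde V}_\alpha$ to some $\widehat C\in\bm_{g,k}$. Simultaneously, by Hausdorff compactness of closed subsets of the compact space $\bHy$, we may assume $\overline\varkappa_{t_\alpha}(V_\alpha)\to\Theta\subset\bHy$, and our task is then to equip $\Theta$ with the structure of $\Theta(\Delta)$ for some $\Hy$-floor diagram $\Delta$.

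Next, I would use the map $\pi_P:\overline\I\setminus P\to Q$ to import the commutative phase-tropical theory: the quadric $Q$ is a toric surface containing $(\C^\times)^2$ as its dense orbit, and $\pi_P\circ\nu_\alpha:\tilde V_\alpha\setminus\nu_\alpha^{-1}(P)\to Q$ is a scaled family of parameterized curves in a toric variety, to which I apply the phase-tropical compactness theorem. For each component $K\subset\widehat C$, pick a non-nodal, non-marked $p\in K$ and a family $p_\alpha\to p$ in $\tilde V_\alpha$, and set
\[
r(v_K)=\lim_{t_\alpha\to\infty}\rho\bigl(\overline\varkappa_{t_\alpha}(\nu_\alpha(p_\alpha))\bigr)\in[0,\infty],
\]
the width of the corresponding vertex $v_K$ of $\Delta$. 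Independence of the choice of $p$ follows by the argument of Proposition \ref{indepK}. If $r(v_K)>0$ the rescaled-translated limit of $\pi_P\circ\nu_\alpha$ gives a curve in $Q$ whose bidegree in $H_2(Q)=\Z^2$ is declared to be $(d_+(v_K),d_-(v_K))$; if $r(v_K)=0$ the ambient $\cp^3$-class of the corresponding subsurface is used instead to define $\delta(v_K)$. When $d_+(v_K)=0$ the limiting curve is a fiber of $\pi_+$ and hence projects to a single point on $\dd\Hy$, providing the angle $\varphi(v_K)$ via the identity $\overline\varkappa\circ\pi_P=\varphi\circ\overline\varkappa$.

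To each node or marked point $p$ of $\widehat C$ adjacent to components $K,K'$ I attach an edge $E_p$ of $\Delta$ connecting $v_K$ and $v_{K'}$; leaves $E_q$ go either to vertices of width $\infty$ (corresponding to punctures in $\nu_\alpha^{-1}(Q)$ escaping to the absolute) or to vertices of width $0$ (corresponding to spherical necks collapsing onto $P$). The edge weight $w(E_p)$ is the winding number of the vanishing cycle around the limiting straight cylinder in $(\C^\times)^2\subset Q$ provided by Lemma \ref{lem-punctures}, while the edge angle $\varphi(E_p)\in S^2$ is the common argument of that cylinder. The balancing condition \eqref{eq-div} at $v$ then follows because the total homology class of $K$ in $Q$ pairs with the fiber classes of $\pi_\pm$ to yield $\tfrac12\operatorname{div}(v)$; the total degree formula \eqref{total-deg} follows from additivity of degree across the components of the stable limit.

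The main obstacle is the last clause of the $\varkappa$-tropical convergence definition: for a vertex-free interval $I$, each annular component $K_\alpha\subset V_\alpha(I)$ must collapse in $\overline\I$ to a unique point $q(E)\in Q$ on the limiting straight cylinder, and $H_{t_\alpha}(K_\alpha)$ must wrap exactly $w(E)$ times around the annulus $\pi_P^{-1}(q)\cap\varkappa^{-1}(\Theta_I(K_\Delta))$. This requires combining three inputs: (i) collapse of $\pi_P\circ\nu_\alpha$ near $p$ to a point of $Q$, via Proposition \ref{nody} applied to the toric phase-tropical limit on $Q$; (ii) the identity \eqref{htpiP}, so that $H_{t_\alpha}$ genuinely rescales the amoeba to its tropical scale; and (iii) Lemma \ref{lem-punctures} to guarantee that the limiting cylinder in $Q$ is straight, hence that the $\pi_P$-fiber above it is a well-defined tube in $\overline\I$. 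Given these three ingredients, the required winding number count follows from the standard correspondence between the monodromy of vanishing cycles and tropical edge weights.
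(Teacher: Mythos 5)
Your overall architecture (normalize, pass to a stable-limit subsequence, build the floor diagram from components/nodes, check \eqref{total-deg} and \eqref{eq-div}, then verify the convergence clauses on vertex pieces and edge annuli) matches the shape of the paper's argument, but the step you rely on to import the commutative phase-tropical machinery does not work: you apply the phase-tropical compactness theorem to the family $\pi_P\circ\nu_\alpha$ with target the toric surface $Q$, and $\pi_P$ defined in \eqref{piP} is \emph{not} holomorphic --- the paper points this out explicitly in the remark following Lemma \ref{geo0} (a line close to a generator of $Q$ would otherwise have to map to a generator, contradicting $Q\cap l\subset\pi_P(l)$). Consequently $\pi_P\circ\nu_\alpha$ is not a scaled family of algebraic parameterized curves in $(\C^\times)^2$, and none of Theorem \ref{phtr-thm}, Proposition \ref{indepK}, Proposition \ref{nody} or Lemma \ref{lem-punctures} can be invoked for it. The same gap infects your treatment of the radial data: you define the width $r(v_K)$ as a limit of $\rho\circ\overline\varkappa_{t_\alpha}$ along a chosen family of points and assert independence of the choice ``by the argument of Proposition \ref{indepK}'', but that proposition concerns tropical limits of coordinates of a holomorphic family into a torus; you never tie $\rho\circ\overline\varkappa_{t}$ to such coordinates, so neither existence of the limit nor its independence of the chosen point is justified.

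The paper's proof supplies exactly the two ingredients you are missing. For the angular data it projects holomorphically: after extracting a limit curve $V_\infty$ it chooses $p\in Q\setminus V_\infty$ (with the two rulings through $p$ avoiding the relevant components) and uses the central projection $\pi_\alpha:V_\alpha\to Q$ followed by $\pi_+$, which is algebraic. For the radial data it writes $\overline\varkappa(A)=AA^*$ in the Hermitian-matrix model and computes
$\rho(\overline\varkappa_t(A))=\tfrac1{\log t}\operatorname{arccosh}\bigl(||A||^2/(2|\det A|)\bigr)$,
so that the width becomes the tropical limit of $\sum_j|z_j^2/(z_0z_3-z_1z_2)|$, i.e.\ of absolute values of honest rational functions on the curve; assembling these with the projection and the normalization gives a holomorphic map $\overline C_\alpha\to(\cp^1)^5\times\cp^3$, and only then is the phase-tropical compactness theorem applied (after also marking the critical points of $\rho_\alpha$, which you omit but which is needed to cut the curve into vertex pieces and edge annuli). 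Two further points where your sketch diverges from what is actually needed: the edge weight is defined as the local linking number of the vanishing cycle $\nu_\alpha(\gamma_p)$ with $Q$ in a small ball in $\cp^3$ (the winding happens around $Q$ inside $\overline\I$, not around a cylinder inside $Q$), and the factor $2$ in \eqref{eq-div} comes from the Euler class of the normal bundle of $Q$ in $\cp^3$ (twice the plane section), not from pairing $[K]$ with the fiber classes of $\pi_\pm$, which would only produce $d_+(v)+d_-(v)$.
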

The limit in \eqref{lim-thm} is taken in the Hausdorff metric on subsets of $\bHy$.

\begin{proof}
After passing to a subsequence, we may assume that $V_\alpha\subset\cp^3$ converge to a (possibly reducible or non-reduced) curve $V_\infty\subset\cp^3$ of degree $d$. 
Choose a point $p\in Q\setminus V_\infty$ so that the two lines passing through $p$ and contained in $Q$ are disjoint from the components of $V_\infty$ not contained in $Q$. 
Then the projection from $p$ defines a holomorphic map
\[
\pi_\alpha:V_\alpha\to Q
\]
such that $\pi_\alpha(z)=z$ if $z\in Q\cap V_\alpha$.
Using \eqref{Qproj} we define the holomorphic map
\[
\varphi_\alpha=\pi_+\circ\pi_\alpha:V_\alpha\to \cp^1.
\]
Clearly, if $z_\alpha\in V_\alpha$ is a family such that $\lim\limits_{t_\alpha\to\infty} z_\alpha=z_\infty\in Q$ then
we have $$\overline\varkappa(z_\infty)=(\infty,\lim \limits_{t_\alpha\to\infty} \varphi_\alpha(z_\alpha))\in \bHy$$
in the polar coordinates presentation.

\newcommand{\ctortri}{({\mathbb C}^\times)^3}
Our next objective is to compute the tropical limit of the $\rho$-coordinate
of $\overline\varkappa(z_\alpha)\in\bHy$.
Recall (see \cite[Proposition 2.4.5]{Thu}) that 
\begin{equation}\label{H-hyp}
\Hy=\{(x_0,x_1,x_2,x_3)\in\R^4\ |\ x_0x_3-x_1^2-x_2^2=1\}
\end{equation}
with the metric $$d_{\Hy}((x_0,x_1,x_2,x_3),(y_0,y_1,y_2,y_3))=
\operatorname{arccosh}(\frac{x_0y_3+x_3y_0}2-x_1y_1-x_2y_2).$$
Furthermore, see \cite[Chapter 2.6]{Thu}), the points of $\Hy$ may be identified with the unitary Hermitian matrices
$
\begin{pmatrix}
x_0 & x_1+ix_2\\
x_1-ix_2 & x_3
\end{pmatrix},
$ while the absolute $\dd\Hy$ can be identified with the rank 1 Hermitian matrices up to scalar multiplication.
Any Hermitian $2\times 2$-matrix up to real multiplication is uniquely represented by a unitary Hermitian matrix while the origin $0\in\Hy$ can be identified with the unit matrix.
Thus we have the following presentation of  the map $\overline\varkappa$ for a matrix $A\in\cp^3$ (viewed up to a complex scalar multiplication):
\begin{equation}
\overline\varkappa(A)=AA^*\in\bHy.
\end{equation}
In particular, 
\begin{equation}\label{rhokapt}
\rho(\overline\varkappa_t(A))=\frac{1}{\log t}d_{\Hy}(AA^*,0)=
\frac{1}{\log t}\operatorname{arccosh}\frac{||A||^2}{2|\det(A)|},
\end{equation}
where $||A||^2=|z_0|^2+|z_1|^2+|z_2|^2+|z_3|^2$ for  $A=
\begin{pmatrix}
z_0 & z_1 \\
z_2 & z_3
\end{pmatrix}$.
Note that for $x\le 0$ we have $e^x\le 2\cosh(x)\le e^x+1$ and thus for $t\le 1$ we have 
$\log(2t-1)\le\operatorname{arccosh}(t)\le \log(2t)$.
This implies that for any scaled sequence $y_\alpha\in\R_{\ge 0}$ we have 
$$\limtrop y_\alpha=\lim\limits_{t_\alpha\to\infty}\frac{1}{\log t_\alpha}\operatorname{arccosh}(y_\alpha),$$
so that the limiting value of \eqref{rhokapt} is given by the tropical limit of the quantity
\begin{equation}\label{rasst}
\frac{|z_0|^2+|z_1|^2+|z_2|^2+|z_3|^2}{|z_0z_3-z_1z_2|}=
\sum\limits_{j=0}^3|\frac{z_j^2}{z_0z_3-z_1z_2}|\ge 2,
\end{equation}
where each individual term $\frac{z_j^2}{z_0z_3-z_1z_2}$ is a rational function in the coordinates $z_0,z_1,z_2,z_3$.

Let  $\nu_\alpha:\overline C_\alpha\to V_\alpha\in\bar I=\cp^3$ be the normalization of $V_\alpha$.
Define $\bar f^{Q}_\alpha:\overline C_\alpha\to(\cp^1)^4$ as the composition of $\nu$ and the map
$$
(\frac{z_0^2}{z_0z_3-z_1z_2} , \frac{z_1^2}{z_0z_3-z_1z_2} , \frac{z_2^2}{z_0z_3-z_1z_2} , \frac{z_3^2}{z_0z_3-z_1z_2})\in(\cp^1)^4.
$$
Denote $\bar f^{\dd\Hy}_\alpha=\pi_q\circ\nu_\alpha$ and
$$\bar f_\alpha=(\bar f^{Q}_\alpha,\bar f^{\dd\Hy}_\alpha,\nu_\alpha):\overline C_\alpha\to(\cp^1)^5\times\cp^3.$$ 
Set $C_\alpha=\overline f_\alpha^{-1}((\C^\times)^8)$.
The surface $C_\alpha$ is obtained from $\overline C_\alpha$ by removing $k$ points.
After passing to a subsequence, the number $k$ as well as the genus $g$ of $\overline C_\alpha$ may be assumed to be independent of $\alpha$.
The map 
$$\rho_\alpha:C_\alpha\to [2,\infty]$$
defines as the composition of $\overline f_\alpha$ and \eqref{rasst} is
a real algebraic function on on $\overline C_\alpha$ whose degree is bounded.
Thus, after yet another passing to a subfamily, our curves $C_\alpha$ may be assumed to have a finite number $k_c$ of such critical points, so that we can mark these points on $\overline C_\alpha$.
Passing to a phase-tropically converging subfamily
provided by Theorem \ref{phtr-thm} we get the limiting curve $\widehat C\in\bm_{g,k+k_c}$
with the tropical limits \eqref{hK} and phases \eqref{Kphase} for each component $K\subset\widehat C$.
We also consider the limiting curve $f^{\dd\Hy}_\infty:\widehat C_{\dd\Hy}\to\cp^1$ in $\bm_{g,k+k_c}(\cp^1)$ as well as the forgetting map $\widehat C_{\dd\Hy}\to\widehat C$ contracting some components of $\widehat C_{\dd\Hy}$.
In addition, we consider the limiting map $\nu_\infty\in\bm_{g,k+k_c}(\cp^3,d)$ of $\nu_\alpha$ and the limiting map $f^{\dd\Hy}_\infty\in\bm_{g,k+k_c}(\cp^1,d)$ of $f^{\dd\Hy}_\alpha$.

We build the limiting $\Hy$-floor diagram $\Delta$ with the help of $\widehat C_{\dd\Hy}$.
As in the proof of Theorem \ref{phtr-thm} we first build a finer graph $\tilde\Delta\to\Delta$.
Namely, we define a vertex of $\tilde\Delta$ as either a component $K\subset \widehat C_{\dd\Hy}$ or a marked point $p\in \widehat C_{\dd\Hy}$ 
corresponding to an intersection point of $\bar f_\alpha(\overline C_\alpha)\cap Q$ for large $t_\alpha$.
We set $r(v_p)=\infty$ and $d_+(v_p)=d_-(v_p)=0$ if $v_p\in\operatorname{Vert}(\tilde\Delta)$ comes from such a marked point $p$.

Since the tropical limit of the sum equals to the maximum of the tropical limits of the summands, the limit
$$
\rho_K=\limtrop \rho_\alpha(z_\alpha)\in [0,\infty],
$$
where $z_\alpha\in \overline C_\alpha\subset \bu_{g,k+k_c}(\cp^1)$ is a family converging to a non-nodal non-marked point of $K$
depends only on $K$, 
and must be equal to $\infty$ for any non-tropically finite component $K$.
Namely, $\rho_K$ coincides with the maximum of the first four coordinates of $h_K\in [-\infty,\infty]^8$ in \eqref{hK}. 
If $v_K\in\operatorname{Vert}(\tilde\Delta)$ corresponds to a component $K\subset\widehat C_{\dd\Hy}$ then we set $r(v_K)=\rho_K$.
If $\rho_K>0$ then 
$\nu_\infty(K)\subset Q$, and we set $$(d_+(v_K),d_-(v_K))=[\nu_\infty(K)]\in H_2(Q)=\Z^2.$$
If $\rho_K=0$ then we set $$\delta(v_K)=[\nu_\infty(K)]\in H_3(\cp^3)=\Z.$$

We define the edges of $\tilde\Delta$ by means of the nodal points of $\widehat C_{\dd\Hy}$ as well as those marked points $p\in\widehat C_{\dd\Hy}$ which correspond to intersection points $\bar f_\alpha(\overline C_\alpha)\cap Q$ for large $t_\alpha$.
In the latter case the edge $E_p$ connects $v_p$ and $v_K$ if $p\in K$.
If $p\in \widehat C_{\dd\Hy}$ is a nodal point then the edge $E_p$ connects the vertices corresponding to the components adjacent to $p$.
It might happen that $E\in\operatorname{Edge}(\tilde\Delta)$ connects two vertices $v_1,v_2$ with $r(v_1)=r(v_2)$.
In such case we contract $E$ to the vertex $v_E$ of the new graph, and set $r(v_E)=r(v_1)$ 
and $d_\pm(v_E)=d_\pm(v_1)+d_\pm(v_2)$.
After performing all such contractions we get the graph $\Delta$.

We define $\phi(E_p)=f^{\dd\Hy}_\infty(p)$.
If $E_p\in\operatorname{Edge}(\Delta)$
we consider the vanishing circle $\gamma_p\subset \bar C_\alpha$ for large $t_\alpha$ as in the proof of  Proposition \ref{prop-vancircle}.
The image $\nu_\alpha(\gamma_p)$ is disjoint from $Q$ and converges to $\nu_\infty(p)\in Q$, since one of the adjacent vertices $v$ to $E$ has positive width $r(v)>0$.
We define $w(E_p)$ as the local linking number in the small 6-ball around $\nu_\infty(p)$ of $\nu_\alpha(\gamma_p)$ and $Q$.

The identity \eqref{total-deg}
holds since $d$ is the degree of the limiting curve $\nu_\infty:\widehat C_{\dd\Hy}\to\cp^3$.
If $v\in\operatorname{Vert}(\Delta)$ comes from a marked point then $v$ is 1-valent, and the condition \eqref{d+0} is vacuous.
If $v=v_K$ comes from a component $K\subset\widehat C_{\dd\Hy}$ then $\nu_\infty(K)\subset Q$ and $\pi_+(\nu_\infty(K))\in\cp^1$ is a point if $d_+(v)=0$, and thus the condition \eqref{d+0} also holds in this case.
To see that $\Delta$ is a $\Hy$-floor diagram we are left to verify \eqref{eq-div}.

Suppose that $v\in \operatorname{Vert}^+(\Delta)$ corresponds to $K\subset\widehat C_{\dd\Hy}$.
Here $K$ may be a single component or a a connected union of several components of the same width (resulting from the graph contraction $\tilde\Delta\to\Delta$).
For large $t_\alpha$ we define $K^\circ_\alpha\subset\widehat  C_{\dd\Hy}$ as the subsurface bounded by the vanishing cycles $\gamma_p\subset\widehat C_{\dd\Hy}$ corresponding to the edges $E_p\subset\Delta$ adjacent to $v$.
The image $\nu_\alpha(K^\circ_\alpha)$ is contained in a small neighborhood of $Q$ but disjoint from the quadric $Q\subset\cp^3$ itself.
Furthermore, the boundary components of $\nu_\alpha(\dd K^\circ_\alpha)$ corresponding to the edges $E_p$ are contained in small ball neighborhoods of $\nu_\infty(p)\in Q$ and correspond to $w(E_p)$ times the boundary of a small disk transversal to $Q$ in these balls.
The equation \eqref{eq-div} follows from the computation of the Euler class of the normal bundle of $Q$ in $\cp^3$ (equal to twice the plane section of $Q$).
If $v_0\in \operatorname{Vert}^0(\Delta)$ corresponds to $K\subset\widehat C_{\dd\Hy}$ then  \eqref{eq-div} follows since the intersection number of $Q$ and the result of attaching of small disks to $\nu_\alpha(\dd K^\circ_\alpha)$ in the corresponding small balls is twice the degree of $\nu_\infty(K)$.

We are left to prove the $\varkappa$-tropical convergence of $V_\alpha$ to $\Theta(\Delta)\subset\bHy$.
For a vertex $v\in \operatorname{Vert}(\Delta)$ corresponding to $K\subset\widehat C_{\dd\Hy}$ we form $K^\circ_\alpha$ as above.
Given a small $\epsilon>0$ set 
$$I_{v,\epsilon}=[r(v)-\epsilon,r(v)+\epsilon]$$ if $r(v)<\infty$ and
$I_{v,\epsilon}=[1/\epsilon,\infty]$ if $r(v)=\infty$.
For a large $t_\alpha$ we may assume 
$$\rho\circ\varkappa_{t_\alpha}(\gamma_p)\subset[0,\infty]\setminus I_{v,2\epsilon}$$
for the vanishing circles $\gamma_p$ adjacent to $v$.
The surfaces 
\[
K^\circ_\alpha(v;\epsilon)=K^\circ_\alpha\cap(\rho\circ\varkappa_{t_\alpha})^{-1}(I_{v,\epsilon})
\]
are smooth surfaces with boundaries corresponding to the edges of $\Delta$ adjacent to $v$,
and any critical point of $\rho\circ\varkappa_{t_\alpha}\circ\nu_\alpha:\overline C_\alpha\to (0,\infty)$
is contained in the surface $K^\circ_\alpha(v;\epsilon)$ for some $v$.
We have
$$\lim\limits_{t_\alpha\to\infty}K^\circ_\alpha(v;\epsilon)=\Theta(v)\cup\bigcup\limits_{E_+}R_{\phi(E_+)}[r(v),r(v)+\epsilon]\cup\bigcup\limits_{E_-}R_{\phi(E_-)}[r(v)-\epsilon,r(v)],
$$
where $E_+$ (resp. $E_-$) goes over edges connecting $v$ to vertices of higher (lower) width.
The complement $C_\alpha\setminus\bigcup\limits_v K^\circ_\alpha(v;\epsilon)$ consists of annuli $K_\alpha(E;\epsilon)$ for edges $E\subset\Delta$.
If $E$ connects vertices $v_1$ and $v_2$ with $r(v_1)< r(v_2)<\infty$ (resp. $r(v_1)< r(v_2)=\infty$) then $\varkappa_{t_\alpha}\circ\nu_\alpha(K_\alpha(E;\epsilon))$ converges to
$$\Theta_{[r(v_1)+\epsilon,r(v_2)-\epsilon]}(E)\
\text{(resp.  $\Theta_{[r(v_1)+\epsilon,1/\epsilon]}(E)$)},$$ 
since $\rho\circ\varkappa_{t_\alpha}\circ\nu_\alpha$ is critical point free on $K_\alpha(E;\epsilon)$.
Convergence of $\nu_\alpha(K_\alpha(E;\epsilon))$ to $q(E)=\nu_\infty(p)\in Q$ implies convergence of $H_{t_\alpha}(K_\alpha(E;\epsilon))$ to $$\pi_P^{-1}(q(E))\cap\varkappa^{-1}(\Theta_I(E)).$$

\end{proof}

\ignore{
\ignore{
For $I\subset\{0,\dots,n\}$ we denote
$$\cp^I=\{(z_0:\dots:z_n)\in\cp^n\ |\ z_j\neq 0\ \text{iff}\ j\in I\},$$
so that we have $\cp^n=\bigcup\limits_{I}\cp^I$ and $\ctor=\cp^{\{0,\dots,n\}}$.
The closure $\overline{\cp^I}$ is a coordinate projective subspace of dimension ${\#(I)}$.
To each component $K\subset C$ we define $I(K)\subset\{0,\dots,n\}$ by the properties that 
$f(K)\cap\cp^{I(K)}\neq\emptyset$ and $f(K)\subset\overline{\cp^{I(K)}}$.
\begin{defn}
We say that a curve \eqref{barf}
is of toric degree $\delta$ if each special point $p$ at every component $K\subset C$ is prescribed a vector $\delta(K,p)\in\Z^n$ with the following properties.
\begin{itemize}
\item For every component $K$ we have $\sum\limits_{p}\delta(K,p)=0$, where the sum is taken over all special points of $K$.
\item If $p$ is a node adjacent to the components $K_1$ and $K_2$ then $\delta(K_1,p)+\delta(K_2,p)=0$.
\item If each component intersects  
\end{itemize}
\end{defn}

}


Namely, let $z_\infty\in \overline C_\infty$ be a non-nodal point. Denote by $K$ the component of $\overline C_\infty$ containing $z_\infty$ and with $K^\circ\ni z_\infty$ the smooth locus of this component, i.e. the set of points that belong to $K$ and do not belong to any other component of $\overline C_\infty$.

Choose $z_\alpha\in C_\alpha$ such that $\limtrop z_\alpha=z_\infty$. 
Let $\tau_\alpha=|f_\alpha(z_\alpha)|^{-1}\in\R^n_{>0}\subset\ctor$ be the point obtained from $|f_\alpha(z_\alpha)$ by taking the absolute inverse value coordinate-wise. 
Then we have $$\tau_\alpha f_\alpha(C_\alpha)\cap\Log^{-1}_{t_\alpha}(0)\neq\emptyset$$ for the multiplicative translate
$\tau_\alpha f_\alpha(C_\alpha)$ of $f_\alpha(K_\alpha)$ in $\ctor$ by $\tau_\alpha$.
Suppose that ...

Thus whenever the limit
$$\lim\limits_{t_\alpha\to\infty} C_\alpha^z\subset\ctor$$
for a subsequence $t_\alpha$, $\alpha\in B\subset A$, exists, it must be non-empty since it has a non-empty intersection with the compact set $\Log^{-1}_{t_\alpha}(0)$. 
The translates 
Denote by $\Phi^z\subset\ctor$ the irreducible component of $\lim\limits_{t_\alpha\to\infty} C_\alpha^z$ containing $f_\infty(z)$.

\begin{prop}
Let $z,w\in K^\circ\subset\overline C_\infty$, $z_\alpha\in C_\alpha$, $w_\alpha\in C_\alpha$ ne such that $\limtrop z_\alpha=z_\infty$, $\limtrop w_\alpha=w_\infty$.
Suppose that $\lim\limits_{t_\alpha\to\infty} C_\alpha^z$ exists.
Then $\lim\limits_{t_\alpha\to\infty} C_\alpha^w$ exists and coincides with $\lim\limits_{t_\alpha\to\infty} C_\alpha^z$.
\end{prop}
Thus $$\Phi(K)=\lim\limits_{t_\alpha\to\infty} C_\alpha^z\subset\ctor$$
depends only on the component $K$. We call it the {\em phase} of $K$.
\begin{proof}
If  $\lim\limits_{t_\alpha\to\infty} C_\alpha^z$ exists then it must contain
\end{proof}

By the degree of a parametrized curve is the degree of the unparametrized curve obtained as its image.

Complex 1-dimensional subtori $\C^\times\approx T_\xi\subset\ctor$ are given by primitive integer vectors $\xi\in\Z^n$ as the images of multiplicatively linear maps $$\C^\times\ni s\mapsto \xi s\in \ctor.$$
We call the $\phi$-translate $\Phi_\xi=\phi T_\xi$ by a point $\phi\in \Log^{-1}(0)\subset\ctor$ an elementary phase cylinder parallel to $\xi$ with the phase $\phi$.

Suppose that the parameterized curve $h:\Gamma\to\R^n$ is the tropical limit of a family $f_\alpha:C_\alpha\to \ctor$. Let $E\subset\Gamma$ be an (open) edge such that $h|_E$ is non-constant.
Then $h(E)\subset\R^n$ is an interval parallel to an integer vector $\xi\in\Z^n$.
Let $U\subset\R^n$ be an open set such that $K=E\cap h^{-1}(U)$ is a non-empty and connected component of $h^{-1}(U)$.
For a large $t_\alpha$ consider the component $K_\alpha\subset C_\alpha$ of $(\Log_{t_\alpha}\circ f_\alpha)^{-1}(U)$ corresponding to $K$.

For a point $x\in E$ choose a family $z_\alpha\in C_\alpha$ such that $\limtrop z_\alpha=x$. 
Let $\tau_\alpha=|f_\alpha(z_\alpha)|^{-1}\in\R^n_{>0}\subset\ctor$ be the point obtained from $|f_\alpha(z_\alpha)$ by taking the absolute inverse value coordinate-wise. 
Then the multiplicative translate $$E_\alpha=\tau_\alpha f_\alpha(K_\alpha)\subset\ctor$$ of $f_\alpha(K_\alpha)$ in $\ctor$ by $\tau_\alpha$ must intersect the unit torus $\Log^{-1}_{t_\alpha}(0)$.

\begin{prop}
Let $f_\alpha:C_\alpha\to\ctor$ be a scaled family tropically converging to $h:\Gamma\to\R^n$. Then for each $x\in\Gamma$ there exists a scaling subsequence $t_\beta$, $B\subset A$, and an elementary phase cylinder $\Phi_\xi\subset\ctor$ parallel to $\xi$ such that
$$\lim\limits_{t_\beta\to\infty} E_\beta=\Phi_\xi.$$
Here the limit is taken in the sense of the Hausdorff metric on compacts in $\ctor$.
\end{prop}
\begin{proof}
The closures of the translates
$$\tau_\alpha f_\alpha(C_\alpha)\subset\ctor\subset\cp^n$$ 
form a family of projective curves of bounded degree.
Thus there exists a scaling subsequence $t_\beta$, $B\subset A$, such that 
these closures converge (in the Hausdorff metric in $\cp^n$) to a projective curve $Z\subset\cp^n$.
Passing to a subsequence of $B$ if needed we may assume that $E_\beta$ converges to a subset $Z_x\subset Z\cap\ctor$ in the Hausdorff metric on compacts in $\ctor$.

The convergence $t_\beta\to\infty$ implies that $Z_x$ is a union of components of $Z\cap\ctor$. By the condition $(2)$ in the definition of parametrized curve tropical convergence, for each component

...
Denote by $Z_x$ the component of $Z$
\end{proof}


}

\section{Amoebas of surfaces}
\subsection{Convexity of the complement}
Let $S\subset\cp^3$ be a surface. We may compare its hyperbolic amoeba $$\am_S=\am_{\Hy}(S)=\varkappa(S\setminus Q)\subset \Hy$$ against its conventional amoeba $$\am_{\R^3}(S)=\Log(S\cap (\C^\times)^3)\subset\R^3.$$ 
Each amoeba is closed in its ambient space. The complement $\R^3\setminus\am_{\R^3}(S)$ of the Euclidean amoeba is always non-empty, see \cite{GKZ}. But there exist surfaces $S$ such that $\am_{\Hy}(S)=\Hy$.

\begin{exa}\label{borel-ex}
The Borel subgroup $B\subset \I=\operatorname{PSL}_2(\C)$ consisting of upper-triangular matrices $\begin{pmatrix} a & b\\ 0 & d\end{pmatrix}$ coincides with $S\setminus Q$ for a coordinate plane in $\cp^3=\overline\I$.
It is generated by two 1-dimensional subgroups, 
\[
l_1=\{ \begin{pmatrix} 1 & s\\ 0 & 1\end{pmatrix}\ | \ s\in\R\}\ \ \ \text{and} \ \ \ l_2=\{ \begin{pmatrix} t & 0\\ 0 & t^{-1}\end{pmatrix}\ | \ t\in\R^\times\}.
\]
As we have seen in Section 3.1, the amoeba $\varkappa(l_1)$ is a horosphere while the amoeba $\varkappa(l_2)$ is a geodesic serving as the symmetry axis of the horosphere $\varkappa(l_1)$.
Thus the images of the products of elements of $l_1$ and $l_2$ under $\varkappa$ fill the entire space $\Hy$, i.e. we have $\am_B=\Hy$.
\end{exa}

\ignore{
Indeed, as we saw, a Borel subgroup is a semi-direct product of two lines. An amoeba of the first line is a horosphere and an amoeba of the second is a geodesic -- the axis of symmetry for the horosphere. 
The second line acts on $\Hy$ by translations along its amoeba. Thus, the amoeba of the Borel subgroup contains all the translations of the horosphere along the geodesic, and so coincides with the whole $\Hy.$

From this reasoning we can also conclude that a restriction of $\varkappa$ to a Borel subgroup is equivalent to taking the quotient by the rotations around a geodesic, so it is an honest circle fibration. In particular it is an evidence of certain stability for the property of amoeba to span the whole ambient space. We will show that this observation is absolutely true in degree one. 

In general an amoeba of a surface fills almost all ambient space. This is verified by the following simple lemma.
}

\begin{prop}
There exist surfaces $S$ such that $\am_S=\Hy$ as well as surfaces such that $\Hy\setminus\am_S\neq\emptyset$.
\end{prop}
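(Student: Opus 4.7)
The plan is to verify the two existence claims separately, handling the first by invoking Example \ref{borel-ex} and handling the second by constructing an explicit surface whose amoeba avoids a chosen point.

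For the first assertion, I would directly invoke Example \ref{borel-ex}. The Borel subgroup $B$ of upper-triangular matrices is the complement of $Q$ in the hyperplane $S_{1}=\{c=0\}\subset\cp^{3}$, so $S_{1}$ is a surface (of degree one) with $\am_{S_{1}}=\am_{B}=\Hy$.

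For the second, I would exhibit a surface whose amoeba misses the origin $0\in\Hy$. The fiber $\varkappa^{-1}(0)$ equals $P=\operatorname{SO}(3)$, and, as explained just before Lemma \ref{geo0}, in the coordinates $(u_{0}:u_{1}:u_{2}:u_{3})=(a+d:i(a-d):ib:ic)$ this subset is precisely the totally real locus $\rp^{3}\subset\cp^{3}$. Hence $0\notin\am_{S}$ is equivalent to $S\cap\rp^{3}=\emptyset$ in these coordinates. I would take
\[
S_{2}=\{u_{0}^{2}+u_{1}^{2}+u_{2}^{2}+u_{3}^{2}=0\}\subset\cp^{3},
\]
which in the original coordinates is the quadric $\{4ad-b^{2}-c^{2}=0\}$. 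The defining form is positive-definite over $\R$, so $S_{2}\cap\rp^{3}=\emptyset$, and therefore $S_{2}\cap P=\emptyset$. This yields $0\notin\varkappa(S_{2}\cap\I)=\am_{S_{2}}$, so $\Hy\setminus\am_{S_{2}}\neq\emptyset$.

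The only subsidiary check is that $S_{2}$ is an honest surface whose amoeba is not vacuously empty, i.e.\ that $S_{2}\not\subset Q$. It is enough to exhibit a single point of $S_{2}\setminus Q$, for instance the matrix $\begin{pmatrix}1 & 0\\ 2 & 1\end{pmatrix}$, which satisfies $4ad-b^{2}-c^{2}=0$ and $ad-bc=1\neq 0$. No real obstacle arises in the argument; the only conceptual step is recognizing that the identification of $\varkappa^{-1}(0)$ with the totally real $\rp^{3}$ reduces the geometric statement ``$0$ is not in the amoeba'' to the purely algebraic statement ``the defining polynomial of $S$ has no real zero in the $u_{i}$-coordinates'', which is easy to arrange with a sum of squares.
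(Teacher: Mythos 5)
Your proposal is correct and follows essentially the same route as the paper: the first claim via Example \ref{borel-ex}, and the second by identifying $\varkappa^{-1}(0)$ with $\rp^3\subset\cp^3$ via the coordinates from \eqref{Pconj} and taking the real-point-free quadric $\{u_0^2+u_1^2+u_2^2+u_3^2=0\}$, which is exactly the paper's example written out in the original coordinates. Your extra check that $S_2\not\subset Q$ is a harmless (and reasonable) addition not spelled out in the paper.
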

\begin{proof}
In view of Example \ref{borel-ex}
it suffices to find a surface such that $0\notin\am_S$.
Recall that by \eqref{Pconj}, the inverse image $\varkappa^{-1}(0)$ can be identified with the real projective space $\rp^3\subset\cp^3$ after a suitable change of coordinates.
Thus any surface in $S\subset \cp^3$ with the empty real locus $S\cap\rp^3=\emptyset$, e.g. $\{ z_0^2+z_1^2+z_2^2+z_3^2=0\}$ provides a required example after a change of coordinates. 
\end{proof}
 
\ignore{
\begin{prop}
The complement $\Hy\setminus\mathcal{A}$ is bounded.
\end{prop}

\begin{proof}
Consider an intersection of the closure for $S$ in $\Cp$ with $Q$. This gives a curve of nonzero symmetric bi-degree on the quadric. The boundary part of $\varkappa$ is holomorphic. Thus we have the first assertion.
The second assertion is an obvious consequence of the first one.
\end{proof}

\begin{proposition}
A hyperbolic amoeba of a plane in $\I$ is $\Hy .$
\end{proposition}

\begin{proof}
A plane can be given by a homogenous equation 
 $  a_1 x_1+a_2 x_2+a_3x_3+a_4x_4=0$ on $[x_1:x_2:x_3:x_4]\in\Cp.$ This equation can be rewritten as an equation on $X\in \I:$ $$\operatorname{tr}(A_0X)=0,\text{\ where\ } A_0=\begin{pmatrix}
a_1&a_3\\
a_2&a_4
\end{pmatrix} \text{\ and \ } X_0=\begin{pmatrix}
x_1&x_2\\
x_3&x_4
\end{pmatrix}.
$$

As usual, multiplication on the right of the plane by unitary elements doesn't change the amoeba, i.e $$\varkappa(\{X\in\I|\operatorname{tr}(A_0X)=0\})=\varkappa(\{X\in\I|\operatorname{tr}(A_0XU)=0\}),$$ where $U\in U(2).$ But $\operatorname{tr}(A_0XU)=\operatorname{tr}(UA_0X).$ So the geometry of the restriction of $\varkappa$ to the plane and the shape of the amoeba depend only on $A_0\in \mathbb{C}^4\backslash\{ 0\}$ modulo the right action of $U(2)$ and simple rescaling. Thus, $$[A_0]\in\overline\Hy=(\mathbb{C}^4\backslash\{ 0\})\slash(\mathbb{C}^*\times U(2))$$ parametrizes different types of plane projections onto their amoebas. 

Using the left action by $\I$ we isometrically move the amoeba in $\Hy$. This correspond to the analogous action on the space of parameters. We see that the equivalence classes in $\overline{\Hy}\slash\I$  correspond to congruent amoebas. But the quotient consist only of two elements, i.e $\overbar{\Hy}\slash\I=\{\Hy,\partial\Hy\}.$  

We already know that in the case $[A_0]\in\partial\Hy$ the amoeba fills the whole space. That is because $$[A_0]=\begin{pmatrix}
0&1\\0&0\end{pmatrix}\ \text{\ and\ } \operatorname{tr}(A_0X)=x_3=0$$ correspond to the considered example of a Borel subgroup.

The case of $[A_0]\in\Hy$ haven't appeared before. For example, take $A_0$ to be the identity $$A_0=\begin{pmatrix}
1&0\\0&1\end{pmatrix}\text{\ and\ } \operatorname{tr}(X)=0.$$
Clearly, this plane is invariant under conjugation by $\I.$ Restricting this action to the action by $\operatorname{SO}(3),$ we get that the amoeba is invariant under all rotations around the fixed point $O$ in $\Hy.$ 
The point $$X_0=\begin{pmatrix}
0&-1\\1&0\end{pmatrix}$$ obviously sits on the plane. But $X_0$ is a unitary matrix and so $\varkappa(X_0)=O. $ This implies that the amoeba is $\Hy,$ because it is connected, contains the absolute and the point $O$, and invariant under all rotations around the fixed point $O.$    
\end{proof}

From the proof we see that $[A_0]\in\Hy$ actually parametrizes the image for the only singular point of the projection from the plane onto $\Hy$. The fiber over a generic point is a circle and the fiber over $[A_0]$ is a $2$-sphere. The planes of this type are not tangent to $Q.$

The case $[A_0]\in\partial\Hy$ is a case of a plane tangent to the quadric $Q$. The restriction of $\varkappa$ to such a plane is nonsingular circle fibration. We can think that a singular fiber has been moved to infinity. And it is actually so. The plane in this case intersects the quadric in two lines. And one of this lines is mapped to a point $[A_0]$ by the restriction for the boundary part of $\varkappa$ to the plane.

In fact such an enormous behavior is not presented uniquely in degree one. In any degree a space of surfaces with a hyperbolic amoeba spanning $\Hy$ is nonempty and moreover has a nonempty interior. Indeed, if we take a generic collection of planes then any of its small deformations still fills $\Hy.$

The first example of an amoeba with a nonempty complement appears at degree two. Consider a family of quadric surfaces 
$$S_\lambda=\{X\in\Cp|\operatorname{tr}^2 X=\lambda\operatorname{det} X\},\quad\lambda\in\mathbb{C}.$$

We know that $\varkappa(S_0)=\Hy.$ In fact $\varkappa(S_\lambda)=\Hy$ if and only if $0\leq\lambda\leq 4.$ For all other values of $\lambda$ an amoeba is a complement to an open nonempty ball with a center at $O.$

 Let us turn to the problem of describing the topology for the complement of a hyperbolic amoeba for a general surface. We already know that it should consist of a disjoint collection of open convex sets. To solve the problem completely it would be enough to describe the exact upper bound for the number of components for an amoeba when the degree of a hypersurface is fixed and show that all the lower values can be obtained on some open family. Surprisingly, it appears that this program can be implemented in a rather simple way and the result from the very beginning seems to by quite unexpected.
 }
 
\begin{prop}
For any surface $S\subset\cp^3$ we have $\overline\varkappa(S)\supset\dd\Hy$. 
\end{prop}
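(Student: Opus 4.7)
The plan is to use an elementary Bezout-type argument; the statement is essentially a one-liner once one recognises what $\overline\varkappa|_Q$ does, so there is no substantive obstacle to overcome.

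Recall from \eqref{Qproj} and the definition of $\overline\varkappa$ that its restriction to the absolute quadric $Q=\dd\I\cong\cp^1\times\cp^1$ is nothing but the second projection $\pi_+\colon Q\to\cp^1=\dd\Hy$. So, for an arbitrary $\xi\in\dd\Hy$, the fiber
\[
L_\xi := \pi_+^{-1}(\xi)=\cp^1\times\{\xi\}\subset Q
\]
is a member of one of the two rulings of the smooth quadric $Q$. Under the Segre embedding $\cp^1\times\cp^1\hookrightarrow\cp^3$ each such ruling line is a projective line of $\cp^3$, so $L_\xi$ is a line in $\cp^3$.

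The rest is a single application of Bezout's theorem. If $d=\deg S$ and $L_\xi$ is not contained in $S$ then $|L_\xi\cap S|=d\ge 1$; if $L_\xi\subset S$ then $L_\xi\cap S=L_\xi\neq\emptyset$. Either way, we may pick a point $A\in L_\xi\cap S$. By construction $A\in L_\xi\subset Q$ and $\pi_+(A)=\xi$, whence $\overline\varkappa(A)=\xi$ and so $\xi\in\overline\varkappa(S)$. Since $\xi\in\dd\Hy$ was arbitrary, we conclude $\overline\varkappa(S)\supset\dd\Hy$.
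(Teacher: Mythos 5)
Your proof is correct and follows essentially the same route as the paper: both rest on the observation that $\overline\varkappa|_Q=\pi_+$ and that $S$ meets every line of the $\pi_+$-ruling, the paper phrasing this globally ($S\cap Q$ is a bidegree $(d,d)$ curve, so $\pi_+(S\cap Q)=\dd\Hy$) while you argue fiberwise via Bezout. Your version even handles the case $Q\subset S$ uniformly, which the paper treats separately, but the underlying idea is identical.
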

\begin{proof}
Recall that $\overline\varkappa(Q)=\dd\Hy$.
If $Q\subset S$ the proposition is immediate.
Otherwise, the intersection $S\subset Q$ is a curve of bidegree $(d,d)$, where $d$ is the degree of $S$.
We have $\overline\varkappa(S\cap Q)=\pi_+(S\cap Q)=\dd\Hy$.
\end{proof}

It is interesting to note that there exist surfaces $S\neq Q$ such that $\Hy\setminus\am_S$ is unbounded.
\begin{exa}
Choose a point $x\in\dd\Hy$.
The inverse image $\overline\varkappa^{-1}(x)$ is a line on the quadric $Q$.
Let $S_x\subset \cp^3$ be the surface obtained from $Q$ by a small rotation around the line $\overline\varkappa^{-1}(x)$, i.e. a quadric $S_x$, such that $S_x\cap Q$ consists of this line and an irreducible curve of bidegree $(1,2)$ (on either of the two quadrics).
\begin{prop}\label{unbound}
The complement $\Hy\setminus \am_{S_x}$ in this example is unbounded. The geodesic ray $R_x$ 
emanating from the origin $0\in\Hy$ in the direction of $x\in\dd\Hy$ is disjoint from the amoeba $\am_{S_x}$.
\end{prop}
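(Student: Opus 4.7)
The plan is to work in explicit matrix coordinates and obtain a quantitative obstruction. First, using the transitivity of $\I$ on $\dd\Hy = \cp^1$, I would assume $x = [1:0]$, so that $l_x = \overline\varkappa^{-1}(x) = \{c = d = 0\} \subset \cp^3$. In the Hermitian model $\overline\varkappa(A) = AA^*$, the origin $0$ is the identity matrix, and a direct computation identifies the ray $R_x$ with the family $y_t := \operatorname{diag}(e^t, e^{-t})$, $t \geq 0$. Any quadric $S_x$ containing $l_x$ has a defining equation of the form $ad - bc + \epsilon F = 0$, where $F$ is a degree-$2$ form vanishing on $l_x$, i.e.\ $F \in \operatorname{span}(ac, ad, bc, bd, c^2, cd, d^2)$. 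Substituting the Segre parameterisation $(a,b,c,d) = (us, ut, vs, vt)$ of $Q$ shows that $F|_Q$ factors as $v \cdot G$ with $G$ of bidegree $(1,2)$; for generic coefficients, $G$ is irreducible, and the decomposition $S_x \cap Q = l_x \cup \{G = 0\}$ matches the hypothesis of the proposition.

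Next, I would suppose for contradiction that some $y_t \in R_x$ lies in $\am_{S_x}$; then there exists $A \in S_x \cap \I$ with $AA^* = y_t$. Polar decomposition yields $A = y_t^{1/2} V$ for some unitary $V$, giving $a = e^{t/2} v_{11}$, $b = e^{t/2} v_{12}$, $c = e^{-t/2} v_{21}$, $d = e^{-t/2} v_{22}$. In this representative $|ad - bc| = |\det y_t^{1/2}| \cdot |\det V| = 1$, while each of the seven monomials spanning the space of admissible $F$ contains at least one factor from $\{c, d\}$. A term-by-term check shows that after substitution every such monomial has modulus bounded by $\max(1, e^{-t}) \leq 1$ uniformly for $t \geq 0$, so that $|F(A)| \leq C := \sum_i |p_i|$, where the $p_i$ are the coefficients of $F$.

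Finally, the defining equation of $S_x$ forces $1 = |ad - bc| = \epsilon \, |F(A)| \leq \epsilon C$. Choosing the rotation parameter $\epsilon < 1/C$ (which is the quantitative content of ``small rotation'') makes this impossible, so $R_x \cap \am_{S_x} = \emptyset$; since $R_x$ is an unbounded geodesic ray in $\Hy$, the complement $\Hy \setminus \am_{S_x}$ is then unbounded. The main technical subtlety I anticipate is that in $\cp^3$ the matrix $A$ is defined only up to scalar; however both the equation of $S_x$ and the estimate comparing $|ad - bc|$ and $|F(A)|$ are homogeneous of the same degree, so fixing the representative $A = y_t^{1/2} V$ is harmless. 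This projective bookkeeping, together with checking that the normalisation of $R_x$ really does reduce to the stated diagonal family, is where the argument needs the most care.
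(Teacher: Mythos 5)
Your argument is correct, but it takes a genuinely different route from the paper's. The paper proceeds synthetically: combining Proposition \ref{line-descr}, Lemma \ref{lr-act} and Lemma \ref{geo0}, it writes $\varkappa^{-1}(R_x)$ as the union, over $z$ in the line $\overline\varkappa^{-1}(x)$, of the closed half-spheres $H_z$ of the $P$-real lines $L_z$ through $z$ and $\bar z^*$, and then argues that each holomorphic hemisphere $H_z$ meets the nearby quadric $S_x$ only at $z\in Q$ (the second intersection point of $L_z$ with $S_x$ stays near $\bar z^*$, hence in the opposite hemisphere), so $S_x\cap\I$ is disjoint from $\varkappa^{-1}(R_x)$. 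You instead work entirely in the Hermitian model $\overline\varkappa(A)=AA^*$: after rotating $x$ to $[1:0]$, polar decomposition parameterizes the fiber over the ray point $y_t$ by $A=y_t^{1/2}V$ with $V$ unitary, and since every quadratic form vanishing on $l_x=\{c=d=0\}$ lies in the ideal $(c,d)$, each of its monomials has modulus at most $1$ on such a representative while $|ad-bc|=1$; the equation then forces $1\le\epsilon\sum_i|p_i|$, impossible once the rotation is small. Your estimate is a legitimate quantitative substitute for the paper's transversality/continuity step (which is stated quite tersely), it makes the meaning of ``small rotation'' explicit, and it actually proves the disjointness for \emph{every} quadric through $l_x$ satisfying the coefficient bound $\epsilon\sum_i|p_i|<1$, not only perturbations of $Q$. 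What it gives up is the coordinate-free fibration of $\varkappa^{-1}(R_x)$ by holomorphic hemispheres, which is the reusable geometric content of the paper's proof (the analogous fibration of $\varkappa^{-1}([x,y])$ by holomorphic cylinders drives Theorem \ref{t_even}); your family $t\mapsto y_t^{1/2}V$ with $V$ fixed is in fact the coordinate incarnation of those hemispheres. Minor remarks: your reduction to $x=[1:0]$ via a rotation fixing $0\in\Hy$ is harmless, and the Segre computation identifying $S_x\cap Q=l_x\cup\{G=0\}$ with $G$ of bidegree $(1,2)$ is only needed to match the hypotheses of the example, not for the disjointness itself; unboundedness then follows for both of you simply because the complement contains the unbounded ray $R_x$.
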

\begin{proof}
Recall that $P=\varkappa^{-1}(0)$ is the fixed locus of the antiholomorphic involution \eqref{Pconj} after a coordinate change.
For each $z\in\overline\varkappa^{-1}(x)$ there exists a unique $P$-real line $L_z\subset\cp^3$ passing through $z$ and the image of $z$ under \eqref{Pconj}.
The real locus $\R L_z=L_z\cap P$ divides the line $L_z$ into two half-spheres.
Denote with $H_z$
the closure of
the half-sphere containing $z$.
By Proposition \ref{l-act} and Lemma \ref{lr-act}, we have
$$
\varkappa^{-1}(R_x
)=\bigcup\limits_{z\in\varkappa^{-1}(x)} H_z\setminus Q.
$$

Since $H_z$ and $Q$ intersect at $z\in Q\cap S_x=\varkappa^{-1}(R_x)$ transversally, we have $H_z\cap S_x=\{z\}$. Thus $H_z\setminus Q$ is disjoint from $S_x$.  
\end{proof}
\end{exa}
 
\begin{theorem}\label{t_odd}
If $S\subset\cp^3$ is a surface of odd degree then $\am_S=\Hy$.
\end{theorem}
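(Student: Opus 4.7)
The plan is to show that the fiber $\varkappa^{-1}(x)$ meets $S$ for every $x\in\Hy$, so that $x\in\am_S$. Since $\I$ acts transitively on $\Hy$, choose $A\in\I$ with $A(0)=x$. From the definition $\varkappa(z)=z(0)$ we obtain $\varkappa^{-1}(x)=A\cdot P$, where $P=\varkappa^{-1}(0)=\operatorname{SO}(3)$. The left action of $\I$ on itself extends to an action on $\overline\I=\cp^3$ by projective-linear transformations (coming from left multiplication on $2\times 2$ matrices), so $A^{-1}(S)\subset\cp^3$ is still an algebraic surface of odd degree $d$. The identity $S\cap(A\cdot P)=A\cdot(A^{-1}(S)\cap P)$ reduces the problem to the case $x=0$: to show that a complex algebraic surface of odd degree meets $P$.

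Next, apply the coordinate change $(a+d:i(a-d):ib:ic)$ employed just before Lemma \ref{geo0}, which converts the involution \eqref{Pconj} into the standard complex conjugation on $\cp^3$, so that $P$ becomes the standard totally real subspace $\rp^3\subset\cp^3$. The claim then reduces to the following classical fact: every complex algebraic surface $S$ of odd degree $d$ in $\cp^3$ intersects $\rp^3$. For this I use $\Z/2$-cohomology. Write $H^*(\cp^3;\Z/2)=(\Z/2)[h]/(h^4)$ and $H^*(\rp^3;\Z/2)=(\Z/2)[\alpha]/(\alpha^4)$; the inclusion $i:\rp^3\hookrightarrow\cp^3$ satisfies $i^*(h)=\alpha^2$ (a generic complex hyperplane meets $\rp^n$ in a real subspace of real codimension $2$, Poincar\'e dual to $\alpha^2$). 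The mod-$2$ cohomology class of $S$ is $d\cdot h$, so its pullback $d\alpha^2$ is non-zero in $H^2(\rp^3;\Z/2)$ when $d$ is odd. Perturbing $S$ to a nearby surface $S'$ of the same degree transverse to $\rp^3$, Poincar\'e duality mod $2$ on $\rp^3$ identifies $[S'\cap\rp^3]\in H_1(\rp^3;\Z/2)$ with the non-zero class dual to $d\alpha^2$. In particular $S'\cap\rp^3\neq\emptyset$, and a compactness argument as $S'\to S$ in the space of degree-$d$ surfaces in $\cp^3$ gives a limit point in $S\cap\rp^3$.

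Combining the two steps, $\varkappa^{-1}(x)\cap S\neq\emptyset$ for every $x\in\Hy$. Since $\varkappa^{-1}(x)\subset\I=\cp^3\setminus Q$, these intersection points automatically lie in $S\setminus Q$, so $x\in\am_S$, whence $\am_S=\Hy$. The main obstacle is the classical odd-degree non-emptiness $S\cap\rp^3\neq\emptyset$; its proof rests on the non-trivial pullback $i^*(h)=\alpha^2\in H^2(\rp^3;\Z/2)$, a feature special to $\rp^n$ that fails for even degree and accounts for the contrast with the example of Proposition \ref{unbound} above. The remaining arguments are bookkeeping using transitivity of the $\I$-action and the fact that left translation by $\I$ preserves the projective structure on $\cp^3$, hence degrees of subvarieties.
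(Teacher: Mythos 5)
Your proposal is correct and follows essentially the same route as the paper: both reduce the statement to the fact that the fiber $\varkappa^{-1}(x)$, identified (after the coordinate change making \eqref{Pconj} the standard conjugation) with $\rp^3\subset\cp^3$, must meet $S$ because the mod-2 restriction $H^2(\cp^3;\Z_2)\to H^2(\rp^3;\Z_2)$ is an isomorphism and the Poincar\'e dual of $[S]=d[\cp^2]$ is nonzero for odd $d$. The only cosmetic differences are that the paper moves the fiber by an isotopy where you translate $S$ by the left $\I$-action, and it asserts the intersection-theoretic conclusion directly where you spell it out via perturbation to transversality and a limiting argument.
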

\begin{proof}
Note that $\varkappa^{-1}(x)$, $x\in\Hy$, is isotopic to $\varkappa^{-1}(0)$
which in its turn can be identified with $\rp^3\subset\cp^3$.
Thus $H^2(\varkappa^{-1}(x);\Z_2)=\Z_2$ and the restriction homomorphism 
$$H^2(\cp^3;\Z_2)\to H^2(\varkappa^{-1}(x);\Z_2)$$
is an isomorphism.
Non-vanishing of the class in $H^2(\cp^3;\Z_2)$ which is Poincar\'e dual to $[S]=d[\cp^2]\neq 0\in H_4(\cp^3;\Z_2)$ implies that $S\cap\varkappa^{-1}(x)\neq\emptyset$.
\end{proof}

In the general case we have the following theorem. 
Recall that a convex set $K\subset\Hy$ is a set such that $x,y\in K$ implies that the geodesic segment $[x,y]$ between $x$ and $y$ is also contained in $K$.
\begin{theorem}\label{t_even}
For any surface $S\subset\cp^3$ different from the quadric $Q=\{(z_0:z_1:z_2:z_3)\ |\ z_0z_3-z_1z_2\}$  the complement $\Hy\setminus\am_S$
is an open convex set in $\Hy$. In particular, it is connected.
\end{theorem}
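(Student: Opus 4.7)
The plan is to adapt the Ronkin-function strategy from the commutative setting to our hyperbolic fibration. Openness of $\Hy\setminus\am_S$ follows immediately from Corollary \ref{closed-set}, so only the geodesic convexity requires argument. Fix a homogeneous polynomial $f$ of degree $d=\deg S$ defining $S$, and introduce the hyperbolic Ronkin function
$$
N_S(x)=\int_{\varkappa^{-1}(x)}\log\bigl|f(A)/|\det A|^{d/2}\bigr|\,d\mu(A),
$$
where $d\mu$ is the $\operatorname{SO}(3)$-invariant probability measure on the fiber $\varkappa^{-1}(x)\approx\rp^3$. The integrand is scale-invariant in $A\in\overline\I$, so $N_S$ is a well-defined continuous function on $\Hy$. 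My first step is to show $N_S$ is convex along every geodesic. Using left-invariance (Proposition \ref{l-act}), reduce to a geodesic through $0\in\Hy$ parametrized as $\gamma(t)=e^{tJ}$ for a traceless Hermitian $J$, so that $\varkappa^{-1}(\gamma(t))=e^{tJ/2}P$ with $P=\varkappa^{-1}(0)$. For each $U\in P$ the projective line $l_U=\overline{\{[e^{sJ/2}U]:s\in\C\}}\subset\cp^3$ meets every fiber $\varkappa^{-1}(\gamma(t))$ exactly along the circle $|w|=e^t$ in the coordinate $w=e^s\in\C^\times$, and as $U$ varies these circles foliate $\varkappa^{-1}(\gamma)$. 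Since $\log|f/|\det|^{d/2}|$ is subharmonic on $l_U\setminus Q$, Hadamard's three-circle theorem yields convexity in $t$ of its circle average; integrating over $U\in P$ exhibits $N_S\circ\gamma$ as a superposition of convex functions of $t$, hence convex.

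Second, whenever $\gamma(I)\cap\am_S=\emptyset$ for an open interval $I\subset\R$, the polynomial $f$ is non-vanishing on $\varkappa^{-1}(\gamma(t))$ for every $t\in I$, hence on the full annulus $\{e^{\inf I}<|w|<e^{\sup I}\}\subset l_U$ for every $U$; then $\log|f|$ is harmonic there, its circle average is affine in $t$, and so is $N_S\circ\gamma$ on $I$. A direct estimation of the integrand at $[e^tu_{11}:e^tu_{12}:u_{21}:u_{22}]$ as $t\to\pm\infty$ shows that the asymptotic slopes of $N_S\circ\gamma$ equal $\pm d/2$; since these differ when $d\ge 1$, the convex function $N_S\circ\gamma$ cannot be globally affine, so $\am_S$ meets every geodesic (consistent with $\overline{\am_S}\supset\dd\Hy$).

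Given $x_0,x_1\in\Hy\setminus\am_S$ with connecting geodesic $\gamma$, the combination above shows that $N_S\circ\gamma$ is convex on $\R$ and piecewise affine on $\gamma^{-1}(\Hy\setminus\am_S)$, with non-decreasing slopes across complement components and asymptotic slopes $\pm d/2$. Convexity of the complement is equivalent to saying $\gamma^{-1}(\Hy\setminus\am_S)$ is a single (possibly half-infinite) interval on every $\gamma$. The main obstacle is to rule out the one scenario compatible with what we have so far, namely that $\gamma^{-1}(\am_S)$ is a bounded arc separating $\gamma$ into two rays in the complement with forced slopes $-d/2$ and $+d/2$. My plan for closing this is to interpret the total slope jump $d$ across such a hypothetical bounded arc, via Jensen's formula applied fiberwise on the lines $l_U$, as the weighted count of intersections of $S$ with the $\operatorname{SO}(3)$-equivariant pencil $\{l_U\}_{U\in P}$ whose sweeps cover $\varkappa^{-1}(\text{arc})$; and then to use the $\Z/2$-homological calculation that underlies Theorem \ref{t_odd} (vanishing of $[\varkappa^{-1}(x)]\cdot[S]$ for even $d$, non-vanishing for odd $d$) together with the equivariance to force the signed count to be zero, contradicting the strict gap $-d/2<d/2$ and thereby ruling out bounded amoeba arcs on $\gamma$. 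The exclusion of $S=Q$ enters at this last step, since for $f=\det$ the normalization $f/|\det|^{d/2}$ degenerates and the pencil argument collapses.
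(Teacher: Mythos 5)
Your framework (openness via Corollary \ref{closed-set}, the hyperbolic Ronkin function $N_S$, convexity of $N_S\circ\gamma$ from circle averages over the foliation of $\varkappa^{-1}(\gamma)$ by circles lying in lines, affineness over complement intervals, asymptotic slopes $\pm d/2$) is coherent and genuinely parallel to the commutative Passare--Rullg\aa rd picture. But there is a real gap at exactly the point you flag as a ``plan'': nothing you have established restricts the slope of $N_S\circ\gamma$ on a complement component, so your claim that the only surviving scenario is a single bounded amoeba arc with ray slopes $-d/2$ and $+d/2$ is unjustified. Convexity plus piecewise affineness is fully compatible with several complement intervals carrying intermediate slopes --- this is precisely what happens for commutative amoebas, whose complements are genuinely disconnected, so some specifically non-commutative input is indispensable. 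The tool you propose for closing the gap cannot supply it: the $\Z_2$-computation underlying Theorem \ref{t_odd} (restriction $H^2(\cp^3;\Z_2)\to H^2(\varkappa^{-1}(x);\Z_2)$ applied to the dual of $[S]=d[\cp^2]$) vanishes identically for even $d$, hence gives no constraint at all distinguishing a slope-$0$ component from a slope-$\pm 1$ component, and ``equivariance'' by itself does not convert a mod-$2$ vanishing into the integral equality you need.

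What the paper isolates, and what your argument is missing, is the following rotation trick: for $x\notin\am_S$ and any lifting line $L_\gamma$ of a geodesic $\gamma\ni x$ (Proposition \ref{line-descr}), the circle $\varkappa^{-1}(x)\cap L_\gamma$ splits $L_\gamma$ into half-spheres $H^\pm_\gamma$, and the intersection numbers $r^\pm_x$ of $S$ with $H^\pm_\gamma$ are independent of the lift because no intersection point can cross $\varkappa^{-1}(x)$; since $\gamma$ can be continuously rotated around $x$ onto itself with reversed orientation, $r^+_x=r^-_x=d/2$, the identity \eqref{dsur2}. In your language this says the slope of $N_S\circ\gamma$ is $0$ at \emph{every} complement point, which is what forces all complement components on a geodesic to merge. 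The paper then finishes by pure positivity: the holomorphic cylinder $Z\subset L$ over $[x,y]$ has intersection number $d-d/2-d/2=0$ with $S$, hence is disjoint from $S$, and $\varkappa^{-1}([x,y])$ is fibered by such cylinders (Lemma \ref{lr-act}). Note that even after obtaining equal slopes you would still need this positivity step (affineness of an average does not by itself exclude amoeba points: a single annulus of the foliation meeting $S$ has measure zero in the family, and the degenerate case where $S$ contains an entire line of the foliation must be excluded separately, e.g.\ because then $\gamma\subset\am_S$). So the correct repair of your proposal is essentially to import the paper's equality $r^+_x=r^-_x$; without it the Ronkin machinery alone does not prove the theorem.
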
  
\ignore{

The following easy argument essentially proves both theorems.
Consider a topological $4-$dimensional cycle $S$ in $\overline{\I}$ and an amoeba, i.e. its image under $\overline{\varkappa}$. Suppose its complement is nonempty. Fix one hole (a connected component of the complement) of the amoeba. It is automatically open but not necessarily convex. Take a point $p$ inside the hole. Consider an oriented geodesic passing through the point $p$. There exists a lift of this geodesic to a line. The line intersects the cycle in a certain number of points which are splinted in to two parts by the position of there images on the geodesic with respect to the point $p$. Counted with multiplicities these points give us two numbers, say $k_1$ and $k_2$ for the points before and after $p$.
\begin{lemma}
The numbers $k_1$ and $k_2$ doesn't depend on the choice of the lift for the geodesic. 
\end{lemma}
\begin{proof}
Clearly the sum $k_1+k_2$ is constant and equals two the degree of the cycle of $S$. Now we note that the family of lines projecting down two the geodesic is continuous and is actually diffeomorphic to $\mathbb{CP}^1$. We complete the proof by recalling that $p$ is not in the amoeba of $S$ and thus the intersection points cannot jump through $\varkappa^{-1}(p)$.
\end{proof}

This lemma works in the same way in the case of classical logarithmic amoebas. The following phenomenon is new.

\begin{lemma}
The numbers $k_1$ and $k_2$ are equal.
\end{lemma}

This implies immediately that the hole can exist only if the degree of $S$ is even. This completes an alternative proof for Theorem \ref{t_odd}.

\begin{proof}
We use the same argument as in the previous lemma, but we are going to vary the geodesic through the point $p$. This variation will go smoothly (in contrast with the logarithmic setup) since locally the family of lines projecting to the geodesic is parametrized by three points in a sphere. Thus we can choose a path in the space of lines, such that each line is projected by $\varkappa$ to the geodesic passing through distinguish $p$ and these geodesics perform a half-twist around $p$ returning to the initial geodesic, but changing the orientation. This interchanges the two types of the intersection points for $S$ and the lines.
\end{proof}

Now we turn to the case when $S$ is an algebraic even degree surface, and so connected components of its amoeba are convex.
}
\begin{proof}
\ignore{
Note that we may assume that $S\cap Q\neq Q$.
Indeed, otherwise we may pass to a surface $S'$ defined as the closure of $S\setminus Q$ in $\cp^3$. The hypothesis $S\neq Q$ implies that $S'\neq\emptyset$ while we have $\am_{S'}=\am_S$ by construction.
Thus the intersection $S\subset Q$ is a curve of bidegree $(d,d)$, where $d$ is the degree of $S$ and $\overline\varkappa(S\cap Q)=\pi_+(S\cap Q)=\dd\Hy$.
Furthermore, any ..
}
Openness of $\Hy\setminus\am_S$ is implied by Corollary \ref{closed-set}.
Let $x,y\in \Hy\setminus\am_S$ be distinct points.
Each oriented geodesic $\gamma$ is an amoeba of a line $L_\gamma\subset\cp^3$ by Proposition \ref{line-descr}.
If $x\in\gamma$ then $\varkappa^{-1}(x)\cap L_\gamma$ is a circle dividing the line $L_\gamma$ to two half-spheres $H^+_\gamma$ and $H^-_\gamma$. 
The intersection numbers $r^+_x$ (resp. $r^-_x$) of $S$ and $H^+_\gamma$
(resp. of $S$ and $H^-_\gamma$) do not depend on the choice of $L_\gamma$ since $x\notin \am_S$.
Since we may continuously 
deform $\gamma$ to itself with the reversed orientation via rotations around $x$, we have $r^+_x=r^-_x$. 
Similarly, we have $r^+_y=r^-_y$. Furthermore, $r^+_x+r^-_x=r^+_y+r^-_y$ since both quantities coincide with the degree $d$ of $S$ in $\cp^3$.
Thus 
\begin{equation}\label{dsur2}
r^+_x=r^-_x=r^+_y=r^-_y=d/2.
\end{equation}

Let $\gamma_{[x,y]}\subset\Hy$ be the geodesic passing through $x$ and $y$ and $L$ be a line such that $\gamma_{[x,y]}=\am_L$. The complement $L\setminus \varkappa^{-1}(\{x,y\})$ consists of two half-spheres and a cylinder $Z$ with $\varkappa(Z)=[x,y]$.
Since the intersection number of $S$ and each of the two half-spheres is $d/2$ by \eqref{dsur2}, the intersection number of the holomorphic cylinder $Z$ and the holomorphic surface $S$ is zero, thus they are disjoint.
Lemma \ref{lr-act} and Proposition \ref{line-descr} imply that $\varkappa^{-1}([x,y])$ is fibered by such holomorphic cylinders.
Thus $[x,y]\cap\am_S=\emptyset$.
\ignore{
As in the proof of Proposition \ref{unbound}

Let $\gamma$ 
 Take a point $p$ in the intersection of the geodesic and the amoeba lying between the holes.  The intersection of $S$ and $\varkappa^{-1}(p)$ is nonempty. Take a point $A$ in this intersection. There exists a line $l$ through $A$ projecting down on $\gamma$.

In analogy with what we were doing before it is possible to distinguish three segments in $\gamma$\ :before the first hole, between the holes, after the second hole; and prescribe some numbers $k_1$, $k_2$, $k_3$ to the segments, given by the intersection points of $l$ and $S$ over the segments. From the construction of $l$ and holomorphicity of $S$ we have $k_2$ strictly positive. Looking on the first hole individually we have $k_1=k_2+k_3$, and on the second $k_1+k_2=k_3.$ Thus $k_2$ should be zero. This contradicts to the existence of two distinct points in the amoeba and completes the proof of Theorem \ref{t_even}

Now it becomes not completely evident that the situation when the hole exists is common and that it could be arbitrary large. Clearly, the condition of existence of a hole is open. We are going to show how to deform an arbitrary even degree hypersurfaces to create a hole in its amoeba containing an arbitrary compact subset of $\Hy.$ 

Consider a compact set $C\subset\Hy$ and a degree $2d$ hypersurface $S\subset\Cp$ given by $f=0$. A preimage $\varkappa^{-1}(C)$ is again compact and doesn't intersect $Q$. Thus a function $det^{-n}f$ is well defined on  $\varkappa^{-1}(C)$ and its absolute value has a finite maximum $m$. Now take any $\lambda\in\mathbb{C}$ such that $|\lambda|>m$ then an amoeba of $S_\lambda=\{f+\lambda det=0\}$ doesn't contain $C.$
}
\end{proof}
Note that the identity \eqref{dsur2} provides another proof of Theorem \ref{t_odd}.

\subsection{Left $\operatorname{PSL}_2$-Gauss map}
Multiplication from the left defines a holomorphic action of $\I$ on itself, and thus also a trivialization of the tangent bundle $T\I\approx\I\times\C^3$.
Let $S\subset\cp^3$ be a complex surface and $S^\circ\subset S$ be the smooth locus of $S\cap\I$.
We define the left $\operatorname{PSL}_2$-Gauss map
\begin{equation}
\gamma^-_S:S^\circ\to \cp^2
\end{equation}
as the map sending $A\in S^\circ$ to $A^{-1}T_AS$, i.e. to the tangent space to the surface $S$ after bringing it to the unit element $e\in\I$ by left multiplication.
In this way we get a plane in the tangent space $T_e\I$ which can be viewed as an element of ${\mathbb{P}}(T^*_e\I)=\cp^2$.
Similarly we can define the right $\operatorname{PSL}_2$-Gauss map $\gamma^+_S:S^\circ\to \cp^2$.

It is easy to compute $\gamma_S^-$ in terms of the homogeneous polynomial $p$ defining $S=\{(a:b:c:d)\ |\ p(a,b,c,d)=0\}$.
For this purpose it suffices to choose three linearly independent tangent vectors in $T_e\I$.
It is convenient to choose them tangent to $P=\varkappa^{-1}(0)$, i.e. in the real part of the Lie algebra of $\operatorname{SO}(3)=P$ which coincides with that of $\operatorname{SU}(2)$.
We can take
$
\begin{pmatrix}
i & 0\\
0 & -i
\end{pmatrix}
$,
$
\begin{pmatrix}
0 & 1\\
-1 & 0
\end{pmatrix}
$
and
$
\begin{pmatrix}
0 & i\\
i & 0
\end{pmatrix}
$
for the basis of $T_eP$.
Taking partial derivatives of $p$ with respect to images of these vectors under the differential of the left multiplication by 
$A=
\begin{pmatrix}
a & b\\
c & d
\end{pmatrix}
\in S$
we get the following expression for $\gamma_S^-(A)$ in coordinates:
\[
( i(ap_a-bp_b+cp_c-dp_d) : 
-bp_a+ap_b-dp_c+cp_d : 
i(bp_a+ap_b+dp_c+cp_d) )\in\cp^2,
\]
where $p_a,p_b,p_c,p_d$ stand for the corresponding partial derivatives of $p$ at $A$.
Thus it is an an algebraic map on $S^\circ$ whose degree cannot be greater than $d^3$.

\begin{prop}\label{prop-N}
Let $N\subset \cp^3$ be a plane tangent to the quadric $Q$.
Then $\gamma^-_N$ is a constant map.
\end{prop}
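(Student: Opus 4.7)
The plan is to show directly that the projective plane $A^{-1}N \subset \cp^3$ is the same for every $A \in N \cap \I$. Since $N$ is linear, $T_A N$ agrees with $N$ as a linear subspace at every smooth point, and left multiplication is an automorphism of $\cp^3$, so $\gamma^-_N(A) = A^{-1}T_A N$ may be identified with the tangent space at $e$ of the linear subspace $A^{-1}N$ (which passes through $e$ because $A \in N$). Thus constancy of $A^{-1}N$ immediately gives constancy of $\gamma^-_N$.

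First, I would write the plane $N$ as $\{X : \operatorname{tr}(BX) = 0\}$ for some nonzero matrix $B$. The tangency condition $N$ tangent to $Q = \{\det = 0\}$ forces $B$ to be of rank one: the tangent plane to $Q$ at a point $A_0 \in Q$ is the zero locus of $X \mapsto \operatorname{tr}(\operatorname{adj}(A_0)\,X)$ (the differential of $\det$ at $A_0$), and the adjugate of a rank-$1$ matrix in the $2\times 2$ setting is again rank $1$. Hence $B$ factors as $B = uv^T$ for some column vector $u$ and row vector $v^T$ in $\C^2$.

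Next, for $A \in N \cap \I$ I compute
\[
A^{-1}N = \{Y : \operatorname{tr}(BAY) = 0\}, \qquad BA = u v^T A = u\cdot(v^T A),
\]
a rank-$1$ matrix whose column span is the fixed line $\langle u\rangle$ and whose row span is determined by the row vector $v^T A$. The condition $A \in N$ unpacks to $\operatorname{tr}(BA) = v^T A\,u = 0$, which says precisely that the row vector $v^T A$ lies in the one-dimensional annihilator $u^\perp$. So $v^T A = \lambda(A)\,w$ for a fixed nonzero row vector $w$ spanning $u^\perp$ and some nonzero scalar $\lambda(A)$ (nonzero because $A$ is invertible).

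Consequently $BA = \lambda(A)\,uw^T$ is a nonzero scalar multiple of the fixed rank-$1$ matrix $uw^T$, and the plane
\[
A^{-1}N = \{Y : \operatorname{tr}(uw^T\, Y) = 0\}
\]
is independent of $A$. The only delicate point is the identification of tangency of $N$ to $Q$ with rank-$1$-ness of $B$; once this is in hand, the defining equation $A \in N$ automatically pins $v^T A$ to the one-dimensional line $u^\perp$, and the rest is routine matrix arithmetic. A conceptual side remark is that the resulting fixed plane $\{Y : \operatorname{tr}(uw^T Y) = 0\}$ is itself a Borel subgroup of $\I$, which is consistent with every tangent plane to $Q$ being a left translate of a Borel subgroup.
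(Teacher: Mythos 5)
Your proof is correct, and it takes a genuinely different route from the paper's. The paper argues geometrically: since $N$ is tangent to $Q$, the intersection $N\cap Q$ is a pair of lines, one from each ruling of $Q=\cp^1\times\cp^1$; by Lemma \ref{lr-act} any left translate $BN$ still contains the line belonging to the first ruling, so if $B$ carries one point of $N\cap\I$ into $N\cap\I$ then $N\cap BN\setminus Q\neq\emptyset$ and hence $BN=N$, and this invariance of $N$ under left translations between its points immediately forces $\gamma^-_N$ to be constant. You instead compute directly: writing $N=\{X : \operatorname{tr}(BX)=0\}$, using the differential of $\det$ (the adjugate) to see that tangency to $Q=\{\det=0\}$ forces $B=uv^T$ to have rank one, and then observing that for $A\in N\cap\I$ the membership condition $\operatorname{tr}(BA)=v^TAu=0$ pins the row vector $v^TA$ to the line annihilating $u$, so that $BA$ is a nonzero multiple of the fixed rank-one matrix $uw^T$ and $A^{-1}N=\{Y : \operatorname{tr}(uw^TY)=0\}$ is a fixed plane through $e$; identifying $\gamma^-_N(A)$ with the tangent plane at $e$ to $A^{-1}N$ is legitimate because left multiplication extends to a projective-linear automorphism of $\overline\I=\cp^3$. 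Both arguments prove the same underlying invariance, but yours buys an explicit description of the constant value: the fixed plane is the closure of a Borel subgroup (the stabilizer of the line $\langle u\rangle\subset\C^2$), which matches Example \ref{borel-ex} and is useful context for the later discussion of the conic $C_N$; the paper's argument is shorter and stays at the level of the two rulings, reusing Lemma \ref{lr-act}. Two harmless points to tidy up: you implicitly use that a plane tangent to the smooth quadric $Q$ is the tangent plane at some point of $Q$ (standard, but worth a word), and there is a notational slip between the row vector $w$ and $w^T$.
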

\begin{proof}
Since $N$ is tangent to $Q$ the intersection $Q\cap N$ is a union of two lines.
By Lemma \ref{lr-act} the image $BN\subset \cp^3$, $B\in\I$, is a plane containing one of these lines.
Thus either $N=BN$ or $N\cap BN\setminus Q=\emptyset$. Thus a vector tangent to $N$ at $A\in N$ must remain tangent to $N$ under the left multiplication by $B$ if $BA\in N$.
\end{proof}

\begin{prop}
Let $R\subset \cp^3$ be a plane not tangent to the quadric $Q$.
Then $\gamma^-_R$ is a map of degree 1.
\end{prop}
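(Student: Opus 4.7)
The plan is to reinterpret $\gamma^-_R$ geometrically and count its fibers using Lemma \ref{lr-act}.

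First, I would recast $\gamma^-_R$ as a map to the $\cp^2$ of planes in $\cp^3$ through the identity $e$. For $A\in R^\circ$, left multiplication by $A^{-1}$ is a holomorphic automorphism of $\cp^3$ sending the plane $R$ to the plane $A^{-1}R$, which contains $e$ (since $A\in R$), and its differential at $A$ identifies $T_AR$ with $T_e(A^{-1}R)$. Because planes in $\cp^3$ through $e$ are canonically in bijection with 2-planes in $T_e\cp^3$, under the natural identification of targets one has simply
\[
\gamma^-_R(A)=A^{-1}R.
\]

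Next, I would compute the fiber over a plane $\Pi$ through $e$. The condition $A^{-1}R=\Pi$ is equivalent to $A\Pi=R$. The apparent extra requirement that $A$ lie on $R$ is automatic, since $e\in\Pi$ forces $A=A\cdot e\in A\Pi=R$. So I only have to count elements $A\in\I$ satisfying $A\Pi=R$.

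Finally, I would invoke Lemma \ref{lr-act}. Since $R$ is not tangent to $Q$, the intersection $R\cap Q\subset\cp^1\times\cp^1$ is a smooth curve of bidegree $(1,1)$, i.e.\ the graph of a M\"obius transformation $\phi_R\colon\cp^1\to\cp^1$. The generic plane $\Pi$ through $e$ is likewise not tangent to $Q$, because the planes tangent to $Q$ (the dual quadric $Q^*$) cut out a conic in the $\cp^2$ of planes through $e$, hence a proper subvariety; write $\phi_\Pi$ for the corresponding M\"obius transformation. By Lemma \ref{lr-act}, left multiplication by $A\in\I$ preserves the first factor of $Q$ and sends the graph of $\phi_\Pi$ to the graph of $A\circ\phi_\Pi$. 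So $A\Pi=R$ translates into $A\circ\phi_\Pi=\phi_R$, which has the unique solution $A=\phi_R\circ\phi_\Pi^{-1}$. The generic fiber of $\gamma^-_R$ therefore consists of a single point, giving $\deg\gamma^-_R=1$.

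The only delicate point is the reinterpretation in the first step, namely that $A^{-1}T_AR$ really is the tangent plane at $e$ of the globally defined plane $A^{-1}R$; once this identification is in place, Lemma \ref{lr-act} mechanically produces the unique preimage and there is no further computation.
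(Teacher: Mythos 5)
Your argument is correct, but it takes a genuinely different route from the paper. The paper exploits the explicit coordinate formula for $\gamma^-_S$: for a plane the partial derivatives $p_a,\dots,p_d$ are constants, so the three components of $\gamma^-_R$ are linear forms, and the proof reduces to showing $\gamma^-_R$ is not constant when $R$ is transverse to $Q$; non-constancy is then ruled out by looking at the Gauss maps $\gamma_-$ of lines $l\subset R$ (via Remark \ref{rem-Sym}) and the fact that points of $R\cap Q$ realize arbitrary $\pi_-$-images. You instead identify $\gamma^-_R(A)$ with the plane $A^{-1}R$ through $e$ and count the fiber over a generic plane $\Pi\ni e$ directly: $A^{-1}R=\Pi$ is equivalent to $A\Pi=R$, and Lemma \ref{lr-act} turns this into $A\circ\phi_\Pi=\phi_R$ for the M\"obius transformations whose graphs are $\Pi\cap Q$ and $R\cap Q$, forcing $A=\phi_R\circ\phi_\Pi^{-1}$. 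Your approach buys more than the statement asks for --- it exhibits $\gamma^-_R$ as a birational (in fact bijective on the locus of non-tangent planes) correspondence and avoids the coordinate computation entirely --- while the paper's argument is shorter because linearity does most of the work. One small point to make your count airtight: the equation $A\circ\phi_\Pi=\phi_R$ gives \emph{at most} one solution immediately, but to conclude the generic fiber is nonempty (hence that the map is dominant with fiber exactly one point, rather than having image inside the conic of tangent planes) you should add the one-line converse: for $A=\phi_R\circ\phi_\Pi^{-1}$ the plane $A\Pi$ contains $A(\Pi\cap Q)=R\cap Q$, and since the smooth conic $R\cap Q$ spans $R$, this forces $A\Pi=R$; note also that this $A$ lies in $\I$ and hence in $R^\circ=R\setminus Q$, so it is a legitimate point of the source.
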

\begin{proof}
Since the coordinate $\gamma_R^-$ are given by linear functions for the plane $R$, it suffices to prove that $\gamma_R^-$ is not constant if $R$ is transversal to $Q$.
Assuming the contrary, for any line $l\subset R$ the image of $l$ under the Gauss map $\gamma_-$ would be contained in the line corresponding to the image of $\gamma_S^-(R)$ in $\cp^2$, see Remark \ref{rem-Sym}.
But if $R\cap Q$ is a smooth curve of bidegree $(2,2)$ then any pair of points in $\Sym^2(\cp^1)=\cp^2$ appears in this image for one of the lines as we may choose a pair of points on $R\cap Q$ with arbitrary images under $\pi_-$.
\end{proof}

The map \eqref{Pconj} 
leaves $P=\varkappa^{-1}(0)$ fixed.
Thus it defines the real structure on $T_e\I=\cp^2$ such that $T_e P=\rp^2$.
We refer to this real structure as the $P$-real structure on $\cp^2$.

\begin{theorem}\label{thm-Gaussleft}
Let $S\subset\cp^3$ be a surface and $z\in S$ be its smooth point.
Then $z$ is a critical point for $\varkappa|_S$ if and only if $\gamma_S^-(z)\in\rp^2$,
i.e. $\gamma_S^-(z)$ is a $P$-real point.
\end{theorem}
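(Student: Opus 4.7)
The plan is to reduce the criticality of $\varkappa|_S$ at $z$ to a linear-algebra condition on the tangent plane of $S$ after left-translating $z$ to the identity $e\in\I$, and then match this condition with $P$-reality of the Gauss image.

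First I would note that $\varkappa:\I\to\Hy$ is a submersion whose fibers have real dimension $3$. At a smooth point $z\in S$ one has $\dim_{\R}T_zS=4$ and $\dim_{\R}\Hy=3$, so $z$ is critical for $\varkappa|_S$ exactly when
$$
\dim_{\R}\bigl(T_zS\cap T_zF_z\bigr)\ge 2,
$$
where $F_z=\varkappa^{-1}(\varkappa(z))$ is the fiber through $z$.

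Next I would use the equivariance $\varkappa(A^{-1}g)=A^{-1}\cdot\varkappa(g)$ with $A=z$: left multiplication by $A^{-1}$ carries $F_z$ to $F_e=P$ and $T_zF_z$ to $T_eP=\mathfrak{su}(2)\subset\mathfrak{sl}_2(\C)=T_e\I$, while sending $T_zS$ to the complex $2$-plane $V=A^{-1}T_zS$ whose class in $\cp^2=\pp(T_e^*\I)$ is by definition $\gamma_S^-(z)$. The criticality condition becomes $\dim_{\R}(V\cap T_eP)\ge 2$.

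The core step is then the following lemma: for a complex $2$-plane $V\subset T_e\I$ and the totally real $3$-plane $T_eP$ (so $T_e\I=T_eP\oplus iT_eP$), $\dim_{\R}(V\cap T_eP)\ge 2$ if and only if $V=W\otimes_{\R}\C$ for some real $2$-plane $W\subset T_eP$. For the forward direction, any real subspace $W\subset T_eP$ is totally real in $T_e\I$, so $\dim_{\C}(W\otimes_{\R}\C)=\dim_{\R}W$; taking $W\subset V\cap T_eP$ of real dimension $2$ yields $W\otimes_{\R}\C\subset V$, and equality follows by dimension count. Dually, writing $V=\ker\ell$ for a nonzero $\ell\in T_e^*\I$, the condition reads: the image of $\ell|_{T_eP}$ is contained in a real line $\R\cdot e^{i\theta}\subset\C$, so that $e^{-i\theta}\ell$ is real-valued on $T_eP$, equivalently $[\ell]=\gamma_S^-(z)$ is a $P$-real point of $\cp^2$.

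I do not expect a serious obstacle. The only point worth careful verification is that the $P$-real structure on $\cp^2=\pp(T_e^*\I)$ appearing in the theorem statement is dual to the real structure on $T_e\I$ induced by $T_eP$; this is consistent with the explicit coordinate formula for $\gamma_S^-$ written just before the theorem, whose coordinates are exactly those of $\ell$ evaluated on the real basis of $T_eP=\mathfrak{su}(2)$.
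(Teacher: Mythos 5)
Your proposal is correct and follows essentially the same route as the paper's own proof: reduce to the identity by left translation (using the left-equivariance of $\varkappa$), observe that criticality means the kernel of $d\varkappa|_{T_zS}$ has real dimension at least $2$, and conclude via the linear-algebra fact that a complex $2$-plane meets the totally real $3$-plane $T_eP$ in dimension $2$ exactly when it is $P$-real (and in dimension $1$ otherwise). Your write-up merely makes explicit the dimension count and the dual identification of $P$-reality with the functional being real-valued on $T_eP$ up to phase, which the paper leaves implicit.
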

\begin{proof}
Note that a plane $R\subset\cp^2$ always intersect $P=\rp^3$ transversally.
It is $P$-real if and only if $R\cap P$ is real 2-dimensional. Otherwise, $R\cap P$ is 1-dimensional.
If $z=e\in S$ then $e$ is critical for $\varkappa|_S$ if and only if the dimension of the kernel of the differential of $\varkappa$ is greater than one, i.e. if $T_eS$ is $P$-real.
Left multiplication by $z$ extends this argument for an arbitrary smooth point $z\in S$.
\end{proof}

Consider the locus $C_N$ consisting of all points $z\in \cp^2$ such that there exists a plane $N\subset\cp^3$ tangent to $Q$ such that $\gamma_N^-(N)=\{z\}$ as in Proposition \ref{prop-N}.
\begin{prop}
The locus $C_N\subset\cp^2$ is a non-degenerate conic curve invariant with respect to the $P$-real structure without $P$-real points. 
\end{prop}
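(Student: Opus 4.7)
The plan is to recognize $C_N$ as a $\operatorname{PSL}_2(\C)$-invariant curve in $\cp^2=\pp(T_e\I)=\pp(\mathfrak{sl}_2(\C))$ and then use the orbit structure of the adjoint representation to pin it down as the conic of nilpotent matrices. First I would parametrize planes tangent to $Q$ by their points of tangency: each $p\in Q$ determines a unique tangent plane $N_p\subset\cp^3$, and Proposition \ref{prop-N} assigns to it a single point $\phi(p):=\gamma^-_{N_p}(N_p)\in\cp^2$, so that $C_N=\phi(Q)$.

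Next, I would exploit equivariance from both sides. By Lemma \ref{lr-act}, the left $\I$-action on $Q=\cp^1\times\cp^1$ preserves the first factor, so $B\cdot N_{(\alpha,\beta)}=N_{(\alpha,B(\beta))}$. A direct computation shows that the left Gauss image is itself left-invariant, since $\gamma^-_N$ is defined by left-translating tangent planes back to the identity. Hence $\phi(\alpha,B(\beta))=\phi(\alpha,\beta)$ for all $B$, and $\phi$ factors through $\pi_-$ as a map $\bar\phi\colon\cp^1\to\cp^2$. Symmetrically, the right action acts on the first factor of $Q$ by M\"obius transformations, while acting on the left Gauss image by conjugation: $\gamma^-_{NC}(AC)=C^{-1}\gamma^-_N(A)C$. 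Therefore $\bar\phi$ is $\operatorname{PSL}_2(\C)$-equivariant with respect to the M\"obius action on $\cp^1$ and the projectivized adjoint action on $\cp^2$.

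With equivariance established, the rest is essentially automatic. The adjoint representation of $\operatorname{PSL}_2(\C)$ on $\mathfrak{sl}_2(\C)$ is irreducible, hence has no invariant lines and no fixed points in $\cp^2$; in particular $\bar\phi$ cannot be constant, so its image is a $\operatorname{PSL}_2(\C)$-invariant algebraic curve. The projectivized adjoint action has only two orbits on $\cp^2$: the smooth conic $\{\det=0\}\subset\pp(\mathfrak{sl}_2(\C))$ of nonzero nilpotent matrices up to scalars, and its dense complement. Since $C_N=\bar\phi(\cp^1)$ is one-dimensional, it must coincide with this conic, which is non-degenerate because $\det$ is a non-degenerate quadratic form on $\mathfrak{sl}_2(\C)$.

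Finally, I would check the $P$-real properties. The involution \eqref{Pconj} has $P$ as fixed locus, with tangent space at the identity equal to the compact real form $\mathfrak{su}(2)\subset\mathfrak{sl}_2(\C)$. The determinant is real with respect to this real structure, so $C_N$ is $P$-invariant. On the other hand, any nonzero element of $\mathfrak{su}(2)$ is a traceless anti-Hermitian matrix whose determinant is a sum of three real squares, hence strictly positive; therefore $\mathfrak{su}(2)\setminus\{0\}$ is disjoint from $\{\det=0\}$, so $C_N$ has no $P$-real points. The main technical point will be verifying that the right $\I$-action on the left Gauss image really is the adjoint representation; once this is done, irreducibility of the adjoint representation does all the work.
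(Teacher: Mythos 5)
Your argument is correct, but it takes a genuinely different route from the paper. The paper, after the same first observation (left-invariance of $\gamma^-$ makes $C_N$ parametrized by one ruling of $Q$), gets the conic by a projective-duality degree count: for planes through $e$ the Gauss value is the plane itself, and a generic pencil of planes through $e$ contains exactly two planes tangent to $Q$ because the dual of $Q$ is again a quadric; the absence of $P$-real points is then deduced from the amoeba machinery — a tangent plane $N$ with constant real Gauss value would have every point critical for $\varkappa|_N$ by Theorem \ref{thm-Gaussleft}, forcing a 2-dimensional amoeba and contradicting $\am_N=\Hy$ from Theorem \ref{t_odd}. You instead exploit the two-sided equivariance: left-invariance plus the computation $\gamma^-_{NC}(AC)=C^{-1}\gamma^-_N(A)C$ (which is correct, and is indeed the key point to verify) exhibits $\bar\phi\colon\cp^1\to\pp(\mathfrak{sl}_2(\C))$ as $\operatorname{PSL}_2(\C)$-equivariant into the projectivized adjoint representation, whose only orbits are the nilpotent conic $\{\det=0\}$ and its open complement, so $C_N$ is that conic; non-degeneracy, invariance under \eqref{Pconj}, and the absence of real points then reduce to the elementary facts that $\det$ is a non-degenerate quadratic form compatible with the real structure and is strictly positive on $\mathfrak{su}(2)\setminus\{0\}$. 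Your route is self-contained (no appeal to Theorems \ref{thm-Gaussleft} and \ref{t_odd}), and it yields strictly more: an explicit identification of $C_N$ as the isotropic conic of the trace form; the paper's route is shorter given its machinery and, pleasantly, links the statement to the fact that amoebas of planes fill $\Hy$. Two small points you should make explicit: the paper's Gauss map takes values in $\pp(T_e^*\I)$ (planes in $T_e\I$), so you are tacitly using the $\operatorname{Ad}$-invariant trace form to pass to $\pp(\mathfrak{sl}_2(\C))$ — harmless, since the form is real and definite on $\mathfrak{su}(2)$ and the conic is self-dual — and you should note that $\phi$ is holomorphic (clear from the coordinate formula for $\gamma^-_S$), so that its image is indeed an algebraic curve rather than merely an invariant set.
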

\begin{proof}
The locus $C_N$ is parametrized by one of the factors in $Q=\cp^1\times\cp^1$ and thus is a holomorphic curve. It must intersects any pencil of planes through $e\in\I$ at two points since $Q$ is a quadric, and so is its projective dual surface.
If $N\subset\cp^3$ were a plane such that $\gamma^-_N$ is constant and real then by Theorem \ref{thm-Gaussleft} its amoeba would have to be 2-dimensional.
Therefore this assumption is in contradiction with Theorem \ref{t_odd}.
\end{proof}

The locus $C_N$ has the following special property with respect to the left Gauss map in $\I$.
\begin{prop}
Let $w\in C_N$ and $S\subset\cp^3$ be a generic surface of degree $d$.
Then $\gamma_S^-(w)$ consists of $d(d-1)^2$ points.
\end{prop}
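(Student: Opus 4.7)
The plan is to give a purely geometric description of the fiber $(\gamma_S^-)^{-1}(w)$ and then apply the classical class formula for the dual surface.

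First I will unpack the data attached to $w \in C_N$. The plane $H_w \subset \cp^3$ through $e$ corresponding to $w$ is tangent to $Q$ at some point $q_0 = (\alpha_0,\beta_0) \in Q = \cp^1 \times \cp^1$, so $H_w \cap Q$ is the union of the two rulings through $q_0$; in particular it contains the line $\ell^- = \{\alpha_0\} \times \cp^1 = \pi_-^{-1}(\alpha_0)$. By Lemma \ref{lr-act}, left multiplication by any $A \in \I$ preserves $\pi_-$, hence fixes $\ell^-$ set-wise, while it shuffles lines in the other ruling. Consequently $A\cdot H_w$ is a plane passing through $A = A\cdot e$ and containing $\ell^-$, and since $A \notin Q \supset \ell^-$ this identifies $AH_w$ as the unique plane through $A$ and $\ell^-$.

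Since $T_AS$ automatically passes through $A$, the condition $T_AS = AH_w$ collapses to the single geometric requirement $T_AS \supset \ell^-$, so
\[
(\gamma_S^-)^{-1}(w) = \{A \in S^\circ : T_AS \supset \ell^-\}.
\]
This is exactly the ramification locus of the projection $S \dashrightarrow \cp^1$ from the line $\ell^-$, i.e.\ the set of points at which some plane of the pencil through $\ell^-$ is tangent to $S$. The second step is then the classical count: for a smooth surface of degree $d$, the dual variety $S^* \subset (\cp^3)^*$ is a hypersurface of degree $d(d-1)^2$ (the class of $S$), and the line in $(\cp^3)^*$ dual to $\ell^-$ parametrizes the pencil of planes through $\ell^-$. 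Intersecting this line with $S^*$ produces $d(d-1)^2$ tangent planes, and for generic $S$ each one is tangent at exactly one smooth point of $S$, giving $d(d-1)^2$ distinct elements of $(\gamma_S^-)^{-1}(w)$.

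The delicate point is that $\ell^-$ is not a free line in $\cp^3$: it is constrained to lie on the fixed quadric $Q$ and is determined by $w \in C_N$. The main content of the argument is therefore checking that for $S$ in a dense open subset of surfaces of degree $d$ the specific line $\ell^-$ is nevertheless in sufficiently general position with respect to $S$ for the class formula to apply as stated, i.e.\ the $d(d-1)^2$ intersection points of $\ell^{-\vee}$ with $S^*$ are transverse, distinct, and correspond to distinct smooth tangent points of $S$ (no bitangent through $\ell^-$, no tangency along $S \cap \ell^-$). Since the constraint on $\ell^-$ is independent of $S$, a standard dimension count, varying $S$ while keeping $\ell^-$ fixed, places each obstruction in a proper Zariski-closed subset of the Hilbert scheme of degree-$d$ surfaces, which is enough.
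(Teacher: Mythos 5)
Your proof is correct and follows essentially the same route as the paper: you identify the fiber $(\gamma_S^-)^{-1}(w)$ with the tangency points of $S$ with the pencil of planes through the line of $N_w\cap Q$ preserved by left multiplication (via Lemma \ref{lr-act}), and then count these by the class $d(d-1)^2$ of a smooth degree-$d$ surface. The only difference is that you spell out the genericity verification (transversality of $\ell^{-\vee}\cap S^*$, distinctness and smoothness of the tangency points), which the paper compresses into the hypothesis that $S$ is generic.
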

\begin{proof}
Let $N_w$ be a plane tangent to $Q$ such that $\gamma_{N_w}^-(N_w)=\{w\}$.
As in the proof of Proposition \ref{prop-N}, Lemma \ref{lr-act} implies that we have $w=\gamma_S^-(z)$ for a point $z\in S$ if and only if $S$ is tangent at $z$ to a surface $BN_z$ for some $B\in\I$. By the hypothesis, $S$ is generic and thus there exist $d(d-1)^2$ planes passing through the line $\overline\varkappa^{-1}(w)$ and tangent to $S$ as $d(d-1)^2$ is the class of the smooth surface of degree $d$.
\end{proof}

In particular, in the case  of $d=1$, the locus $\gamma_S^-(w)$ is empty for $w\in C_N$ even for the case when $S$ is a generic plane and the degree of $\gamma_S^-$ is one.

 
 \ignore{
 \section{Limits and Spherical Coamoebas}
Note that so far we were using only the right quotient map $\varkappa:\I\rightarrow\I\slash \operatorname{SO}(3).$ Since $\I$ is non-commutative it make sense to consider the right quotient map together with the left quotient map $$\hat\varkappa(A)=(A(O),A^{-1}(O))$$ which sends $A\in\I$ to $\hat H=\{(p,q)\in\Hy\times\Hy| |pO|=|qO|\}.$  We view $\hat H$ as a cone over $Q$, i.e. $[0,\infty)\times Q\slash (\{0\}\times Q) .$  Denote by $\hat R_t$ the rescaling on $\hat H.$

theorem on limits of constant families

coamoebas of curves

conjectures
}

\bibliography{b}
\bibliographystyle{plain}

\end{document}